\newcommand\tenq[2][1]{%
\def\useanchorwidth{T}%
\ifnum#1>1%
\stackunder[0pt]{\tenq[\numexpr#1-1\relax]{#2}}{\!\scriptscriptstyle\thicksim}%
\else%
\stackunder[1pt]{#2}{\!\scriptstyle\thicksim}%
\fi%
}
\DeclareRobustCommand\widecheck[1]{{\mathpalette\@widecheck{#1}}}
\def\@widecheck#1#2{%
    \setbox\z@\hbox{\m@th$#1#2$}%
    \setbox\tw@\hbox{\m@th$#1%
       \widehat{%
          \vrule\@width\z@\@height\ht\z@
          \vrule\@height\z@\@width\wd\z@}$}%
    \dp\tw@-\ht\z@
    \@tempdima\ht\z@ \advance\@tempdima2\ht\tw@ \divide\@tempdima\thr@@
    \setbox\tw@\hbox{%
       \raise\@tempdima\hbox{\scalebox{1}[-1]{\lower\@tempdima\box
\tw@}}}%
    {\ooalign{\box\tw@ \cr \box\z@}}}
\def\given{\,|\,}
\def\biggiven{\,\big{|}\,}
\def\Biggiven{\,\Big{|}\,}
\def\tr{\mathop{\text{tr}}\kern.2ex}
\def\tZ{{\tilde Z}}
\def\tX{{\tilde X}}
\def\P{{\mathrm P}}
\def\Q{{\mathrm Q}}
\def\E{{\mathrm E}}
\def\R{{\mathbbm R}}
\def\Z{{\mathbbm Z}}
\def\N{{\mathbbm N}}
\def\d{{\mathrm d}}
\newcommand{\Zeta}{\Delta_0}
\newcommand{\dCov}{\mathrm{dCov}}
\renewcommand{\Pr}{\mathrm{P}}
\newcommand{\TV}{\mathrm{TV}}
\newcommand{\HL}{\mathrm{HL}}
\newcommand{\indep}{\perp \!\!\!\perp}
\newcommand{\zahl}[1]{\llbracket #1\rrbracket}
\newcommand\yestag{\addtocounter{equation}{1}\tag{\theequation}}
\newcolumntype{L}[1]{>{\raggedright\let\newline\\\arraybackslash\hspace{0pt}}m{#1}}
\newcolumntype{C}[1]{>{  \centering\let\newline\\\arraybackslash\hspace{0pt}}m{#1}}
\newcolumntype{R}[1]{>{ \raggedleft\let\newline\\\arraybackslash\hspace{0pt}}m{#1}}
\newcolumntype{d}[1]{D{.}{.}{#1}}
\newcolumntype{H}{>{\setbox0=\hbox\bgroup}c<{\egroup}@{}}
\newcolumntype{Z}{>{\setbox0=\hbox\bgroup}c<{\egroup}@{\hspace*{-\tabcolsep}}}
\newcolumntype{b}{X}
\newcolumntype{s}{>{\hsize=.5\hsize}X}
\numberwithin{equation}{section}
\newtheorem{theorem}{Theorem}[section]
\newtheorem{lemma}{Lemma}[section]
\newtheorem{proposition}{Proposition}[section]
\newtheorem{assumption}{Assumption}[section]
\newtheorem{corollary}{Corollary}[section]
\providecommand{\customgenericname}{}
\newcommand{\newcustomtheorem}[2]{%
  \newenvironment{#1}[1]
  {%
   \renewcommand\customgenericname{#2}%
   \renewcommand\theinnercustomgeneric{##1}%
   \innercustomgeneric
  }
  {\endinnercustomgeneric}
}
\theoremstyle{definition}
\newtheorem{definition}{Definition}[section]
\newtheorem{example}{Example}[section]
\newtheorem{remark}{Remark}[section]
\begin{document}

\setlength{\abovedisplayskip}{5pt}
\setlength{\belowdisplayskip}{5pt}
\setlength{\abovedisplayshortskip}{5pt}
\setlength{\belowdisplayshortskip}{5pt}
\hypersetup{colorlinks,breaklinks,urlcolor=blue,linkcolor=blue}

\title{\LARGE On Azadkia--Chatterjee's conditional dependence coefficient}

\author{
Hongjian Shi\thanks{Department of Mathematics, Technical University of Munich, 85748 Garching bei M\"unchen, Germany; e-mail: {\tt hongjian.shi@tum.de}},~~
Mathias Drton\thanks{Department of Mathematics, Technical University
  of Munich, 85748 Garching bei M\"unchen, Germany; e-mail: {\tt mathias.drton@tum.de}},~~and~
Fang Han\thanks{Department of Statistics, University of Washington, Seattle, WA 98195, USA; e-mail: {\tt fanghan@uw.edu}}.
}

\date{}

\maketitle


\begin{abstract}
  In recent work, \citet{MR4352523} laid out an ingenious
  approach to defining consistent measures of conditional dependence.
  Their fully nonparametric approach forms statistics based on ranks
  and nearest neighbor graphs.  The appealing nonparametric
  consistency of the resulting conditional dependence
  measure and the associated empirical conditional dependence coefficient
  has quickly prompted follow-up work that seeks to study its statistical
  efficiency.
  In this paper, we take up
  the framework of conditional randomization tests (CRT) for
  conditional independence and conduct a power analysis that considers
  two types of local alternatives, namely, parametric quadratic mean
  differentiable alternatives and nonparametric H\"older smooth
  alternatives. Our local power analysis shows that conditional independence tests
  using the Azadkia--Chatterjee coefficient remain inefficient even
  when aided with the CRT framework, and serves as motivation to
  develop variants of the approach; cf. \citet{lin2021boosting}.
  As a byproduct, we resolve a conjecture of Azadkia and Chatterjee by
  proving central limit
  theorems for the considered conditional dependence
  coefficients, with explicit formulas for the asymptotic
  variances.
  
\end{abstract}

{\bf Keywords:} conditional independence, graph-based test, rank-based test, nearest neighbor graphs, local power analysis.

\section{Introduction}\label{sec:intro}

Conditional (in)dependence is a fundamental statistical concept that
plays a central role in statistical inference and theory
\citep{MR535541,MR568723}. Testing conditional independence is
nowadays a routine task in graphical modeling \citep{MR3889064},
 causal discovery \citep{MR3822088}, feature selection
 \citep{koller1996toward}, and many other statistical
 applications. Formally, the problem of interest is to test for three
 random vectors $\mX,\mY,\mZ$ the hypothesis
\begin{align}\label{eq:H0}
  H_0: \mY\text{ and }\mZ\text{ are conditionally independent given }\mX,
\end{align}
based on a finite sample of size $n$ from the joint distribution of $(\mX,\mY,\mZ)$. 
It is customary to denote the conditional independence by  $\mY \indep
\mZ \given \mX$.

In contrast to the discrete/categorical case or favorable parametric
settings such as multivariate normality, the general problem of testing
\eqref{eq:H0} when $\mX$ is continuous is a remarkably challenging
task 
\citep{bergsma2004testing, MR4124333, MR4319245}.
A number of attempts have been made to provide
nonparametric solutions,
and notable examples include \citet{linton1997conditional} (on
conditional cumulative distribution functions); \citet{MR2413488,MR2428851,MR3212759} (on
conditional characteristic functions,
conditional probability density functions, and
smoothed empirical likelihood ratios, respectively); \citet{MR2676883} (on maximal
nonlinear conditional correlation); \citet{NIPS2007_3340},
\citet{10.5555/3020548.3020641}, \citet{10.5555/3020751.3020766}, and
\citet{StroblZhangVisweswaran+2019} (on kernel-based conditional
dependence); \citet{pmlr-v22-poczos12} and \citet{pmlr-v84-runge18a} (on
conditional mutual information); \citet{MR3269983} and \citet{MR3449068}
(on conditional distance correlation); \citet{MR2572451} and
\citet{JMLR:v23:20-682} (based on Rosenblatt transformation);
\citet{bergsma2004testing,bergsma2011nonparametric} and
\citet{MR2859749} (copula-based); \citet{NIPS2008_f7664060},
\citet{5740928}, \citet{MR4124333}, and \citet{MR4253763} (regression-based);
\citet{MR3826290} and \citet{MR4319245} (binning-based).


For the important special case where $\mY$ is a random scalar (and hence
denoted in regular font by $Y$), \citet{MR4352523} introduced
a novel and rather different
conditional dependence measure whose estimate ingenuously combines
ideas from rank statistics, nearest neighbor graphs and associated
minimum spanning trees for data sets.
The dependence measure and estimate
were shown to possess the following four
appealing properties:
\begin{enumerate}[itemsep=-.5ex,label=(\arabic*)]
\item the conditional dependence measure takes values in $[0,1]$, is 0
  if and only if $Y\indep \mZ\given \mX$, and is 1 if and only if $Y$
  is almost surely (a.s.) equal to a measurable function of $\mZ$ given $\mX$;
\item the estimate has a simple expression and can be computed in $O(n\log n)$ time; 
\item the estimate is fully nonparametric and has no tuning parameter;
\item the estimate is consistent as long as $Y$ is not a.s.~equal to a measurable function of $\mX$.
\end{enumerate}
The new approach has quickly caught attention. First follow-up work
studies extensions to topological spaces and multidimensional
$\mY$ and explores connections to general random graphs; see
\citet{deb2020kernel} and \citet{huang2020kernel}.  Moreover, for the
case of unconditional dependence, analyses were conducted to better understand the
statistical power of the approach. These analyses treat the very closely
related coefficient presented by \citet{MR4353729}; see
\citet{cao2020correlations}, \citet{MR4430960}, and
\citet{auddy2021exact}.

In this paper, we study the statistical efficiency of
Azadkia--Chatterjee's conditional dependence coefficient in testing the
hypothesis of \emph{conditional} independence from
\eqref{eq:H0}. \citet{MR4352523} themselves did not pursue
using their coefficient for inferential problems such as testing.  To
implement a test, we employ the conditional randomization test (CRT)
framework developed in \citet{MR3798878}; see \citet{MR4060981} for a
related proposal. The CRT framework, which was also adopted in \citet[Section
6.1.3]{huang2020kernel}, assumes that the
conditional distribution of $Y$ given $\mX$ is known, and thus the
null distribution of any conditional dependence coefficient can be
approximated by simulation.


Local power analyses for tests rely on a choice of local alternatives.  In the context of this paper, an
important subtlety lies in the
fact that in order to be relevant for a
CRT-based Azadkia--Chatterjee-type test, the conditional distribution
of $Y$ given $\mX$ should be identical between the null and local
alternatives. Two such families of local alternatives are considered
in this manuscript:
\begin{enumerate}[label=(\alph*)]
\item the joint density of $(\mX, Y, \mZ)$ in the alternative is
  assumed to be ``smoothly'' changing to the null in the sense of quadratic mean differentiability \citep[Definition~12.2.1]{MR2135927}. This is akin to parametric settings, and 
such families of local alternatives have been explored in studies of rank- and graph-based tests in related statistical problems \citep{MR3961499,cao2020correlations,MR4474478}. The critical detection boundary in such cases is known to be at root-$n$;
\item the conditional distribution of $(Y,\mZ)$ given $\mX$ is assumed
  to be H\"older smoothly changing with regard to $\mX$. This is akin
  to nonparametric settings, and the case we will consider is an
  extension of the one that has been examined by
  \citet{MR4319245}. There, as
  $\mX,Y,\mZ$ are all random scalars, the critical detection boundary
  is $n^{-2s/(4s+3)}$, where $s$ denotes the H\"older smoothness exponent.
\end{enumerate}

The local power analyses we report on in this manuscript provide, in
both of the above scenarios, examples that show that the CRT-based
Azadkia--Chatterjee-type test is unfortunately unable to achieve the
critical detection boundary, i.e., the sum of type-I and type-II
errors will not decrease to zero along the
boundary. 
We emphasize here that the power of CRT-based Azadkia--Chatterjee-type
test cannot simply be boosted to achieve the detection boundary by
using the additional information coming from the CRT framework, in view of the
H\'ajek representation theorem; compare Equation (S4) in
\citet{MR4430960-supp}. Our theoretical analysis thus echoes the
empirical observations made in \citet{{huang2020kernel}} and calls for
developing new variants of the tests that use an increasing number of
nearest neighbors when constructing the nearest neighbor graphs; see
\citet[Proposition~1]{MR3961499}, \citet[Remark
4.3]{deb2020kernel}, and in particular, a recent preprint by \citet{lin2021boosting} on boosting the power of Chatterjee's original proposal which however cannot be directly applied to multidimensional cases. These conclusions are also connected to related claims
made by \citet[Corollary~3]{MR443204}, \citet{MR3445317}, and
\citet[Theorems~1 and 2]{MR3909934} in other settings
of nonparametric statistics.

Our local power analysis in Case (a) is built on the innovative new
work of \citet{{deb2020kernel}}, who developed a general framework to
study normalized graph-based dependence measures (combined with rank-
and kernel-based ones) that invokes a Berry--Esseen theorem for
dependency graphs \citep{MR2073183}.  In order to complete our
analysis of the local power, however, an additional ingredient is
needed, namely, we have to prove the existence as well as calculate
the asymptotic variance of the (unnormalized) Azadkia--Chatterjee
conditional dependence coefficient.  To obtain this crucial result
we use
asymptotic techniques devised for 1-nearest neighbor graphs in
\citet{MR914597} and \citet{MR937563}.  This part of our derivations
shall occupy the main body of the proofs of our local power results.
Our analysis covers as a special instance the case of full
independence and, thus, resolves a conjecture of
\citet{MR4352523} about a central limit theorem (CLT) for their
statistic under full independence; see Section~\ref{sec:asynormal}
ahead for details.

Our local power analysis in Case (b), on the other hand, is based on a
brute-force calculation of the mean and variance of the
Azadkia--Chatterjee conditional dependence coefficient along a special
non-standard local alternative sequence that serves as the ``worst
case'' in the minimax lower bound construction of
\citet{MR4319245}. This involves handling permutation
statistics for which permutation randomness is not (though close to)
uniform over all possible rearrangements, a notoriously difficult
task. Interestingly, in a very recent preprint, \citet{auddy2021exact}
did related calculations in analyzing the local power of Chatterjee's
rank correlation coefficient \citep{MR4353729} against a
different family of non-standard local alternative sequences; see the
proof of their Theorem 2.1. It appears, though, that the techniques
used are substantially different from the ones present here, which of
course also differs through the focus on conditional (in-)dependence.

The rest of the paper is organized as
follows. Section~\ref{sec:prelim} reviews the conditional dependence
coefficient proposed by \citet{MR4352523}, denoted $\xi_n$, as
well as the conditional randomization test framework proposed by
\citet{MR3798878}. Section~\ref{sec:asynormal} presents the asymptotic
normality of $\xi_n$ under independence. Local power analyses of CRTs
based on $\xi_n$, in the two cases of alternatives are
presented in Section \ref{sec:subopt1} and Section~\ref{sec:subopt2},
respectively. A brief conclusion is provided in Section~\ref{sec:conclu}. 
The proof of Theorem \ref{thm:powerless} is given in Section~\ref{sec:main-proof},
with auxiliary results and remaining proofs deferred to the supplement.

\smallskip\noindent{\bf Notation.}
For an integer $n\ge 1$, let $\zahl{n}:= \{1,2,\ldots,n\}$.
A set consisting of distinct elements $x_1,\dots,x_n$ is written as either $\{x_1,\dots,x_n\}$ or $\{x_i\}_{i=1}^{n}$. 
The corresponding sequence is denoted $[x_1,\dots,x_n]$ or $[x_i]_{i=1}^{n}$. 
%
For a sequence of vectors $\mv_1,\ldots,\mv_k$, we use $(\mv_1,\ldots,\mv_k)$ as a shorthand for $(\mv_1^\top,\ldots,\mv_k^\top)^\top$. 
For a vector $\mv\in\R^d$, $\lVert\mv\rVert_{}$ stands for the Euclidean norm.
The symbols $\lfloor \cdot\rfloor$ and $\lceil \cdot\rceil$ denote the floor and ceiling functions, respectively. 
The notation $\ind(\cdot)$ is used for the indicator function.  
%
%
For any real-valued random vectors $\mU$ and $\mV$,  the (induced) probability measure, cumulative distribution function, and the probability density function of $\mU$ (if existing) are denoted as $\P_{\mU}$, $F_{\mU}$, and $q_{\mU}$, respectively; the conditional probability density function of $\mU$ given $\mV$ (if existing) is written as $q_{\mU\given\mV}$. In the following, the terms ``absolutely continuous'' and ``almost everywhere'' (shorthanded as ``a.e.'') are with respect to Lebesgue measure. 

\section{Conditional dependence measures and tests}\label{sec:prelim}

In the sequel, let $Y\in\R$ be a random scalar, and let $\mX\in \R^p$
and $\mZ\in \R^q$ be two random vectors, all defined on the same
probability space.  The
goal is to test \eqref{eq:H0} based on observations $(\mX_1, Y_1, \mZ_1), \ldots, (\mX_n, Y_n,
\mZ_n)$ that consist of $n$ independent copies of the triple $(\mX, Y, \mZ)$.   Note that the joint distribution of $(\mX, Y, \mZ)$ need
not be continuous. 

\subsection{Conditional dependence measures and coefficients}

\citet{MR4352523} proposed the following measure of conditional dependence between $Y$ and $\mZ$ given $\mX$:
\begin{align}\label{eq:xi}
\xi=\xi(Y,\mZ\given \mX):=\;&  \frac
{\displaystyle\int\E\big[\Var\big\{\P\big(Y\ge y\biggiven \mX,\mZ\big)\biggiven \mX\big\}\big]\d \P_{Y}(y)}
{\displaystyle\int\E\big[\Var\big\{\ind\big(Y\geq y\big)\biggiven \mX\big\}\big]\d \P_{Y}(y)}.
\end{align}
The following proposition describes the appealing properties we
pointed out in the introduction.

\begin{proposition}\citep[Theorem 2.1]{MR4352523}\label{prop:AC1}
Suppose that $Y$ is not a.s.~equal to a measurable function of $\mX$. 
Then $\xi$ is well-defined and belongs to the interval $[0,1]$. 
Moreover, $\xi$ is a consistent measure of conditional dependence with tailored extremal properties in the sense that $\xi = 0$ if and only if $Y$ and $\mZ$ are conditionally independent given $\mX$, 
and $\xi = 1$ if and only if $Y$ is a.s.\ equal to a measurable function of $\mZ$ given $\mX$. 
\end{proposition}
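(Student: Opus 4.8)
The plan is to treat the numerator and denominator of $\xi$ separately and to reduce everything to a single lemma about when a ``conditional variance'' integral vanishes. Abbreviate the denominator by $\mathcal D:=\int_\R\E[\Var\{\ind(Y\ge y)\mid\mX\}]\,\d\P_Y(y)$, the numerator by $\mathcal N:=\int_\R\E[\Var\{\P(Y\ge y\mid\mX,\mZ)\mid\mX\}]\,\d\P_Y(y)$, and set also $\mathcal D':=\int_\R\E[\Var\{\ind(Y\ge y)\mid\mX,\mZ\}]\,\d\P_Y(y)$. All three lie in $[0,1/4]$, hence are finite. Applying the law of total variance to the pair $(\mX,\mZ)$ inside the integrand of $\mathcal D$ gives the bookkeeping identity $\mathcal D=\mathcal D'+\mathcal N$. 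Granting this, the four assertions become: (i) $\mathcal D>0$ under the stated hypothesis, so $\xi$ is well defined; (ii) $0\le\mathcal D'\le\mathcal D$, so $\xi=1-\mathcal D'/\mathcal D\in[0,1]$; (iii) $\xi=1\iff\mathcal D'=0$; and (iv) $\xi=0\iff\mathcal N=0$.

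The core is the following lemma, which will supply (i) and (iii). For a random element $V$, let $Y_1,Y_2$ be conditionally i.i.d.\ given $V$, each with the conditional law of $Y$ given $V$, and let $Y^\ast$ be a copy of $Y$ in law, independent of $(V,Y_1,Y_2)$. Expanding $\Var\{\ind(Y\ge y)\mid V\}=\P(Y\ge y\mid V)-\P(Y\ge y\mid V)^2$, rewriting the square via the two conditionally i.i.d.\ copies, and integrating $\d\P_Y(y)$ yields
\[
  \int_\R\E\big[\Var\{\ind(Y\ge y)\mid V\}\big]\,\d\P_Y(y)\;=\;\Pr\!\big(Y_1\ge Y^\ast>Y_2\big).
\]
The claim is that this vanishes if and only if $Y$ is a.s.\ equal to a measurable function of $V$. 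The ``if'' direction is clear, since then $Y_1=Y_2$ a.s. For ``only if'', exchangeability of $(Y_1,Y_2)$ upgrades the vanishing to $\Pr\big(Y^\ast\in(\min(Y_1,Y_2),\max(Y_1,Y_2)]\big)=0$, and conditioning on $(V,Y_1,Y_2)$ gives $\P_Y\big((\min(Y_1,Y_2),\max(Y_1,Y_2)]\big)=0$ a.s. Now take two independent copies of $(V,Y_1,Y_2)$ and let $I_j:=(\min(Y^{(j)}_1,Y^{(j)}_2),\max(Y^{(j)}_1,Y^{(j)}_2)]$ for $j=1,2$; since $Y^{(2)}_1$ and $Y^{(2)}_2$ are marginally $\P_Y$-distributed and independent of copy $1$, they a.s.\ miss $I_1$, and symmetrically, so the two endpoints of $I_1$ and of $I_2$ mutually avoid the other interval, which for half-open intervals forces $I_1\cap I_2=\varnothing$ a.s. As $I_1,I_2$ are i.i.d., $\E|I_1\cap I_2|=\int_\R\Pr(t\in I_1)^2\,\d t=0$ forces $\E|I_1|=0$, hence $|Y_1-Y_2|=0$ a.s., i.e.\ the conditional law of $Y$ given $V$ is a.s.\ a point mass; its location is a measurable $m$ with $Y=m(V)$ a.s. Taking $V=\mX$ and invoking the hypothesis that $Y$ is not a.s.\ a function of $\mX$ gives (i); taking $V=(\mX,\mZ)$ gives the characterization of $\{\mathcal D'=0\}$ and hence (iii), once one notes that being a.s.\ a measurable function of $(\mX,\mZ)$ is precisely the statement ``a.s.\ a measurable function of $\mZ$ given $\mX$''.

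For (iv), $\mathcal N=0$ is equivalent to: for $\P_Y$-a.e.\ $y$, $\Var\{\P(Y\ge y\mid\mX,\mZ)\mid\mX\}=0$ a.s., i.e.\ $\P(Y\ge y\mid\mX,\mZ)=\P(Y\ge y\mid\mX)$ a.s. The implication from $Y\indep\mZ\mid\mX$ is immediate. For the converse, I would pass to regular conditional distributions $\mu_{\mX,\mZ}$ and $\mu_\mX$ of $Y$, observe that both are a.s.\ concentrated on the $\P_Y$-full set $A$ of ``good'' $y$, and upgrade the equality $\mu_{\mX,\mZ}([y,\infty))=\mu_\mX([y,\infty))$ from $y\in A$ to all $y\in\R$ --- the points being that across gaps of $\mathrm{supp}(\P_Y)$ both conditional survival functions are locally constant, and that non-atoms of $\P_Y$ carry conditional mass $0$ a.s. Intersecting over a countable dense set of $y$ and using left-continuity in $y$ then gives $\mu_{\mX,\mZ}=\mu_\mX$ a.s., which is exactly $Y\indep\mZ\mid\mX$.

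The step I expect to be the real obstacle is the lemma behind (i) and (iii), specifically the passage from ``$\P_Y\big((\min(Y_1,Y_2),\max(Y_1,Y_2)]\big)=0$ a.s.'' to ``$\mathcal L(Y\mid V)$ is a.s.\ degenerate'': the two-independent-copies argument with the Lebesgue-measure identity is the crux, and it comes with some routine but unavoidable care about regular conditional distributions and the measurability of $m$. The measure-theoretic tidying in (iv) (promoting a $\P_Y$-a.e.\ statement to every $y$) is fiddly but routine, and the law-of-total-variance bookkeeping underlying the decomposition $\mathcal D=\mathcal D'+\mathcal N$ is straightforward.
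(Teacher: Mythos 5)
This proposition is stated in the paper with a citation to Azadkia and Chatterjee's Theorem~2.1 and is \emph{not} proved in the manuscript, so there is no in-paper proof to compare against; the argument of record lives in the cited reference. Judged on its own terms, your proof is correct. The bookkeeping identity $\mathcal D=\mathcal D'+\mathcal N$ is exactly the conditional law of total variance applied to $W=\ind(Y\ge y)$ and the pair $\mX\subset(\mX,\mZ)$, and it immediately gives both well-definedness (once $\mathcal D>0$) and $\xi\in[0,1]$. Your key lemma is the genuinely interesting part: the identity $\int\E[\Var\{\ind(Y\ge y)\mid V\}]\,\d\P_Y(y)=\Pr(Y_1\ge Y^\ast>Y_2)$ is a clean probabilistic reformulation, and the ``two independent copies'' trick --- deducing from $\P_Y\big((\min(Y_1,Y_2),\max(Y_1,Y_2)]\big)=0$ a.s.\ that the random half-open intervals $I_1,I_2$ are a.s.\ disjoint, then using $\E|I_1\cap I_2|=\int\Pr(t\in I_1)^2\,\d t=0$ to force $|I_1|=0$ --- is a nice way to avoid fighting the fact that the regular conditional law $\mu_V$ need not be absolutely continuous with respect to $\P_Y$. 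I verified the interval combinatorics (endpoints avoiding the other interval does force disjointness for half-open intervals), and the passage to a measurable selector $m$ with $Y=m(V)$ a.s.\ is routine. For part~(iv), the converse implication is only sketched, and the promotion from ``$\P_Y$-a.e.\ $y$'' to ``all $y$'' does require genuine case analysis: one needs to treat atoms of $\P_Y$ (countably many, each automatically in the good set), non-atoms of $\mathrm{supp}(\P_Y)$ that are left limit points (left-continuity plus density of the good set in the support), and right endpoints of gaps (countably many, using that their conditional point masses vanish). Your sketch points at exactly these three ingredients, so I would call the gap presentational rather than mathematical; a fully rigorous write-up should spell these cases out rather than appeal to ``fiddly but routine''. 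Overall: a correct and pleasantly self-contained argument, with the interval lemma being the step I would highlight as the novel technical idea relative to a straight reading of the statement.
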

The dependence measure $\xi$  clearly extends an earlier
introduced measure of marginal dependence between $Y$ and (a random
scalar) $Z$, namely, 
\begin{align}\label{eq:DSS}
\xi^{\rm DSS}=\xi^{\rm DSS}(Y,Z):=\frac
{\displaystyle\int\Var\big\{\P\big(Y\ge y\biggiven Z\big)\big\}\d \P_{Y}(y)}
{\displaystyle\int\Var\big\{\ind\big(Y\geq y\big)\big\}\d \P_{Y}(y)},
\end{align}
which \citet{MR3024030} introduced for continuous
distributions and \citet{MR4353729} considered in general.
The quantities $\xi$ and $\xi^{\rm DSS}$ share similar properties:
(i) the consistency in measuring dependence is natural as the numerator (a
nonnegative scalar) is zero if and only if either $Y$ is independent
of $Z$ (for $\xi^{\rm DSS}$), or $Y$ is independent of $\mZ$ given
$\mX$ (for $\xi$); (ii) the self-normalization structure yields
tailored extremal properties as the numerator is always upper bounded
by the denominator; (iii) both the numerator and the denominator
involve the indicator $\ind(Y\geq y)$, which motivates estimation using the ranks
of the $Y_i$'s and their regression on the
$Z_i$'s or $(\mX_i,\mZ_i)$' to account for the conditioning in each term.

Both \citet{MR4353729} and
\citet{MR4352523} advocate a $1$-nearest neighbor (1-NN) approach
to performing the aforementioned regression; note that in one
dimension 1-NN is
obviously corresponding to working with ranks.
In detail, let
\begin{equation}\label{eq:Ri}
R_{i}:=\sum_{j=1}^{n}\ind\big(Y_{j}\le Y_{i}\big)
\end{equation}
be the rank of $Y_i$ among $Y_1,\ldots,Y_n$, and define
\begin{align*}
N(i)&:=\big\{j\ne i: \text{$\mX_j$ is the nearest neighbor of $\mX_i$}\big\},\\
M(i)&:=\big\{j\ne i: \text{$(\mX_j,\mZ_j)$ is the nearest neighbor of $(\mX_i,\mZ_i)$}\big\},
\yestag\label{eq:NNindex}
\end{align*}
to be the indices of the nearest neighbors of $\mX_i$ and
$(\mX_i,\mZ_i)$, respectively.  Here, nearest neighbors are determined
by Euclidean distance and possible ties in distance are broken at random. Azadkia and Chatterjee's conditional dependence coefficient is then defined as 
\begin{align}\label{eq:xin}
\xi_n =\xi_n\big(\big[(\mX_i,Y_i,\mZ_i)\big]_{i=1}^{n}\big):=\frac{\sum_{i=1}^{n}\{\min(R_i,R_{M(i)})-\min(R_i,R_{N(i)})\}}{\sum_{i=1}^{n}\{R_i-\min(R_i,R_{N(i)})\}}. 
\end{align}

Although not at all immediate at first sight, \citet{MR4352523}
showed that $\xi_n$ is a strongly consistent estimator of $\xi$ as
long as the latter is well-defined. We summarize the fact in the following proposition.

\begin{proposition}\citep[Theorem 2.2]{MR4352523}\label{prop:AC2}
As long as $Y$ is not a.s.\ equal to a measurable function of $\mX$,
it holds that $\xi_n$ converges to $\xi$ a.s.\ as $n\to\infty$. 
\end{proposition}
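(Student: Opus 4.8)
The plan is to establish almost sure convergence of $\xi_n$ to $\xi$ by treating the numerator and denominator of $\xi_n$ separately, matching them to the numerator and denominator of $\xi$ after suitable rescaling, and then invoking the continuous mapping theorem (noting the denominator limit is strictly positive under the hypothesis that $Y$ is not a.s.\ a function of $\mX$, so $\xi$ is well-defined by Proposition~\ref{prop:AC1}). First I would rewrite the summands in a way that exposes their dependence on empirical conditional distribution functions. Observe that $\min(R_i, R_{j})/n = \frac1n\sum_{k=1}^n \ind(Y_k \le Y_i)\ind(Y_k \le Y_j)$ up to $O(1/n)$ corrections, so that, dividing numerator and denominator of $\xi_n$ by $n^2$, each becomes an average over $i$ of quantities of the form $\frac1n\sum_k \ind(Y_k \le Y_i)[\cdots]$ involving the nearest-neighbor indices $M(i)$ and $N(i)$. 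The key probabilistic input is that, for 1-NN graphs in $\R^p$ (for $\mX$) and $\R^{p+q}$ (for $(\mX,\mZ)$), the nearest neighbor $\mX_{N(i)}$ converges to $\mX_i$ (and likewise $(\mX_{M(i)},\mZ_{M(i)})$ to $(\mX_i,\mZ_i)$) almost surely in an averaged sense as $n\to\infty$; this is the standard consistency property of nearest-neighbor regression and is what allows the empirical conditional expectations to track the true ones.

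The second step is a law-of-large-numbers / martingale-type argument to replace the double sums by their population counterparts. Concretely, I would argue that
\begin{align*}
\frac{1}{n^2}\sum_{i=1}^n \min(R_i, R_{M(i)}) \;\to\; \int \E\big[\P(Y\ge y \given \mX,\mZ)^2\big]\,\d\P_Y(y) + \tfrac12,
\end{align*}
or rather the appropriate version with the right constants, by conditioning on $Y_i$ and using that the fraction of sample points near $(\mX_i,\mZ_i)$ with $Y$-value below a threshold converges to $\P(Y \le y \given \mX_i, \mZ_i)$; similar statements hold for the $N(i)$ sum and for $\sum_i R_i/n^2 \to \tfrac12$. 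Subtracting, the numerator of $\xi_n$ converges to $\int \E[\Var\{\P(Y\ge y\given \mX,\mZ)\given \mX\}]\,\d\P_Y(y)$ and the denominator to $\int \E[\Var\{\ind(Y\ge y)\given \mX\}]\,\d\P_Y(y)$, which is exactly $\xi$. To make the almost sure (rather than in-probability) claim, I would control fluctuations with a concentration inequality for sums with bounded differences (the summands are bounded by $n$, the ratios by $1$), use a McDiarmid-type bound together with Borel--Cantelli, and handle the random tie-breaking by noting it affects only an asymptotically negligible fraction of indices; the discontinuity of $\ind(\cdot)$ is smoothed out by the integration $\d\P_Y(y)$.

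The main obstacle, and the part that requires genuine care rather than routine estimates, is step two's interchange of the nearest-neighbor limit with the averaging over $i$: one must show that the exceptional set of indices $i$ for which $\mX_{N(i)}$ (or $(\mX_{M(i)},\mZ_{M(i)})$) is \emph{not} close to $\mX_i$ (resp.\ $(\mX_i,\mZ_i)$) has vanishing density, uniformly enough that the contribution of these indices to the double sums is $o(n^2)$ almost surely, \emph{without} imposing continuity or density assumptions on $(\mX,Y,\mZ)$ — recall the proposition allows arbitrary (possibly atomic) joint distributions. This is where the 1-NN stability estimates of \citet{MR914597} and \citet{MR937563}, combined with the fact that each point is the nearest neighbor of at most a bounded number ($c_{p}$, depending only on the dimension) of other points, become essential: that bounded in-degree property caps the influence of any single observation and is precisely what drives both the almost sure LLN and the concentration step. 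Since this is the technically delicate core, in the actual write-up I would isolate it as a lemma on the averaged consistency of 1-NN conditional empirical measures and cite \citet{azadkia2019simple} for the detailed verification, as their Theorem~2.2 is exactly this statement.
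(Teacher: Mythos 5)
The paper does not prove Proposition~\ref{prop:AC2} at all: it is stated verbatim as a citation of \citet[Theorem~2.2]{azadkia2019simple}, with the proof deferred entirely to that reference. Your sketch is a fair high-level account of the strategy actually used there — divide numerator and denominator by $n^2$, rewrite $\min(R_i,R_j)$ as an empirical double sum of indicators, invoke averaged consistency of 1-NN conditional empirical measures, cap each observation's influence via the bounded in-degree property (Lemma~\ref{lem:MD}), and upgrade to almost sure convergence via a bounded-difference concentration inequality plus Borel--Cantelli — and you correctly flag that the delicate step is the interchange of the nearest-neighbor limit with the averaging over $i$ under no continuity assumptions. Since you ultimately defer the technical core to \citet{azadkia2019simple}, exactly as the paper itself does, there is no alternative argument here to compare against; both you and the paper rely on the original source for the actual proof.
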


\begin{remark}
The intuition behind the convergence is by no means transparent. We
refer the readers of interest to \citet[Section 8]{MR4353729}
and the following heuristic argument:
\begin{align*}
\E\Big[n^{-1}\{R_1-\min(R_1,R_{N(1)})\}\Big] \approx&~ \E\Big[F_Y(Y_1)-\min\{F_Y(Y_1),F_Y(Y_{N(1)})\}\Big]\\
=&~ \E\Big[\int \Big\{ \ind(Y_1\geq t)-\ind(Y_1\geq t)\ind(Y_{N(1)}\geq t)\Big\}\d \P_Y(t)\Big]\\
\approx&~  \E\Big[\frac{1}{2}\int\Big\{\ind(Y_1\geq t)-\ind(Y_{N(1)}\geq t)\Big\}^2 \d \P_Y(t)\Big],
\end{align*}
with
\[
\E\{\ind(Y_1\geq t)-\ind(Y_{N(1)}\geq t)\}^2 \approx 2\E[\Var\{\ind(Y_1\geq t)\given \mX_1\}].
\]
Here the last step is intrinsically performed using an 1-NN regression. 
\end{remark}

\subsection{Conditional randomization tests}

Next we introduce the conditional randomization test (CRT)
framework of \citet{MR3798878}.  This framework is designed
for settings where the conditional distribution of $Y$ given $\mX$ is
known or can be accurately inferred from a large
out-of-sample data set. See also \citet[Section 2.2]{MR4060981} for an
illustration of application scenarios of this framework. In the sequel, we use $\Q\equiv\Q(\cdot\given \mx)$ to denote the
Markov kernel used in the algorithm implementing the test of conditional independence. Ideally, $\Q$ is (very
close to) the conditional distribution of $Y$ given $\mX=\mx$.

The CRT framework leverages that under $H_0$ the conditional
distribution of $Y$ given $(\mX,\mZ)$ is the same as that of
$Y$ given $\mX$. This yields the following observation: if $\Q$ equals
the conditional distribution of $Y$ given $\mX=\mx$ and $Y^{(1)}$ is
drawn independently from $\Q(\cdot\given \mX)$, then the two triples $(\mX,Y,\mZ)$ and
$(\mX,Y^{(1)},\mZ)$ are equal in distribution under $H_0$. In
contrast, any difference between the distributions of $(\mX,Y,\mZ)$
and $(\mX,Y^{(1)},\mZ)$ will manifest itself as evidence against $H_0$.
 
To make the idea practical, consider a real-valued test
statistic $\psi_n$ defined on the range of
$\big[(\mX_i, Y_i, \mZ_i)\big]_{i=1}^n$.  Let $B$ be a chosen number
of Monte Carlo simulations.  Then in each round $b\in \zahl{B}$,  one independently draws a new copy $Y_{i}^{(b)}$
from $Q(\cdot\given\mX_i)$ for $i\in\zahl{n}$, and  calculates
$\psi_n\big(\big[(\mX_i, Y_i^{(b)}, \mZ_i)\big]_{i=1}^n\big)$.  The CRT
then examines the difference between the distributions of
$(\mX,Y,\mZ)$ and $(\mX,Y^{(1)},\mZ)$ by comparing the observed test statistic
$\psi_n\big(\big[(\mX_i, Y_i, \mZ_i)\big]_{i=1}^n\big)$ to the
simulated values
$\psi_n\big(\big[(\mX_i, Y_i^{(b)}, \mZ_i)\big]_{i=1}^n\big)$.
The procedure is detailed in Algorithm~\ref{algo:1}.

\begin{algorithm}[t]
\SetAlgoLined
\KwIn{Data $\big[(\mX_i,Y_i,\mZ_i)\big]_{i=1}^{n}$, the chosen conditional distribution $\Q$, 
test statistic $\psi_n$, 
number of simulations $B$, 
and significance level $\alpha\in(0,1)$.}
\For{$b=1,\dots,B$}{Draw a sample $\big[Y_{i}^{(b)}\big]_{i=1}^{n}$ from the product distribution $\bigotimes_{i=1}^{n}\Q(\cdot\given\mX_{i})$, 
 independent of the observed $\big[(Y_i,\mZ_i)\big]_{i=1}^{n}$ and conditionally on $\big[\mX_{i}\big]_{i=1}^{n}$.}
\KwOut{CRT $p$-value defined as 
\[
~~~~p_{\rm CRT}=(1+B)^{-1}\Big[1+\sum_{b=1}^{B}\ind\Big\{
\psi_n\Big(\big[(\mX_i,Y_i^{(b)},\mZ_i)\big]_{i=1}^{n}\Big)\ge 
\psi_n\Big(\big[(\mX_i,Y_i,\mZ_i)\big]_{i=1}^{n}\Big)\Big\}\Big].\!\!\!\!
\]
The CRT is then 
\[
\sT^{\Q,\psi_n}_\alpha\big(\big[(\mX_i,Y_i,\mZ_i)\big]_{i=1}^{n}\big)=\ind(p_{\rm CRT}\le\alpha).
\]}
\caption{\label{algo:1}Conditional randomization test (CRT)}
\end{algorithm}

%

\subsection{CRT using the Azadkia--Chatterjee coefficient}

We will be concerned with the CRT that is obtained by taking  the test
statistic $\psi_n$ in Algorithm \ref{algo:1} to be $\xi_n$, the
Azadkia--Chatterjee conditional dependence coefficient. The resulting
test for significance level $\alpha$ is denoted by
$\sT^{\Q,\xi_n}_\alpha$; here $\Q$ is added to highlight the
dependence of the implementation on the chosen conditional
distribution $\Q$.

\begin{remark}
  \citet[Section 2.2]{MR4060981} argued that in many cases the
  unlabeled data, i.e., data on $(\mX,Y)$ but without the $\mZ$
  component, are plentiful, but labeled data on $(\mX, Y, \mZ)$
  jointly are scarce.  In such cases, it is natural to assume that one
  not only knows (or may very accurately estimate) the needed
  conditional distribution but also the joint distribution of $(\mX,
  Y)$.  When the distribution of $(\mX,Y)$ is known, the only term to
  be estimated from data is the numerator in \eqref{eq:xi};  the denominator in \eqref{eq:xi} only depends on the distribution of $(\mX,Y)$. 

  However, we would like to emphasize that, in view of the H\'ajek
  representation theorem as given in Equation (S4) in
  \citet{MR4430960-supp}, replacing each $R_i$ by $nF_Y(Y_i)$ in
  $\xi_n$ will not result in an (asymptotic) improvement of
  $\xi_n$. In detail, although it is tempting to define an ``oracle
  version'' of $\xi_n$ that uses more information as
\begin{align*}
\widecheck\xi_n=\widecheck\xi_n\Big(\big[(\mX_i,Y_i,\mZ_i)\big]_{i=1}^{n}\Big):=\frac{n^{-1}\sum_{i=1}^{n}[\min\{F_{Y}(Y_i),F_Y(Y_{M(i)})\}-\min\{F_{Y}(Y_i),F_{Y}(Y_{N(i)})\}]}{\int\E[\Var\{\ind(Y\ge y)\given \mX\}]\d \P_{Y}(y)},
\end{align*}
this change will not increase the CRT's power (asymptotically), 
at least in all settings considered in this paper.
\end{remark}

Later, in Sections \ref{sec:subopt1}--\ref{sec:subopt2}, we shall
study in detail the test based on $\xi_n$. However, some preliminary results deserve to be documented first. 
In the following, let $\mathcal{P}_{\Q}$ be the family of all joint
distributions for $(\mX,Y,\mZ)$ such that $Y$ is not a.s.~equal to a
measurable function of $\mX$ and the conditional distribution of $Y$
given $\mX=\mx$ coincides with a given (non-trivial) Markov kernel
$\Q$.  

\begin{proposition}[Control of size and consistency]\label{prop:vacon}
Fix a Markov kernel $\Q$ for $Y$ given $\mX$.
\begin{enumerate}[label=(\roman*)]
\item\label{prop:vacon-1} The test $\sT^{\Q,\xi_n}_\alpha$ is valid in
  the sense that for any $\P_{(\mX,Y,\mZ)}\in\mathcal{P}_{\Q}$
  satisfying $H_0$, denoting $\P_{H_0}:=\P_{(\mX,Y,\mZ)}^{\otimes n}$
  as the corresponding product measure, it holds for any $n\geq 1$ that
\[
\P_{H_0}(\sT^{\Q,\xi_n}_\alpha=1)\le\alpha;
\] 
notice that no assumption 
concerning the number of simulations $B$ is required at all.
\item\label{prop:vacon-2} In addition, $\sT^{\Q,\xi_n}_\alpha$ is consistent in the sense that for any $\P_{(\mX,Y,\mZ)}\in\mathcal{P}_{\Q}$ violating $H_0$, denoting $\P_{H_1}$ as the corresponding product measure, we have
\[
\lim_{n\to\infty} \P_{H_1}(\sT^{\Q,\xi_n}_\alpha=1)=1
\]
as long as the number of simulations $B$ tends to infinity as $n\to\infty$.
\end{enumerate}
\end{proposition}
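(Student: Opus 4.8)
The plan is to dispatch the two parts separately: part~\ref{prop:vacon-1} is an instance of the exchangeability principle underlying the validity of conditional randomization tests, requiring no control whatsoever on $B$, while part~\ref{prop:vacon-2} will combine the strong consistency of $\xi_n$ from Proposition~\ref{prop:AC2} with a concentration bound on the Monte Carlo $p$-value.

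For part~\ref{prop:vacon-1}, I would set $T_0:=\xi_n\big([(\mX_i,Y_i,\mZ_i)]_{i=1}^n\big)$ and $T_b:=\xi_n\big([(\mX_i,Y_i^{(b)},\mZ_i)]_{i=1}^n\big)$ for $b\in\zahl{B}$, appending, if desired, the internal tie-breaking randomness of $\xi_n$ as extra i.i.d.\ coordinates so that $\xi_n$ becomes a fixed measurable map. The crux is that, conditionally on $\big([\mX_i]_{i=1}^n,[\mZ_i]_{i=1}^n\big)$ and under any $\P_{(\mX,Y,\mZ)}\in\mathcal P_\Q$ satisfying $H_0$, the vectors $[Y_i]_{i=1}^n,[Y_i^{(1)}]_{i=1}^n,\dots,[Y_i^{(B)}]_{i=1}^n$ are i.i.d.: using the independence of the $n$ triples together with $H_0$, the conditional law of $Y_i$ given $(\mX_i,\mZ_i)$ equals $\P_{Y\given\mX}(\cdot\given\mX_i)=\Q(\cdot\given\mX_i)$, which is exactly the law from which each $Y_i^{(b)}$ is drawn. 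Since the neighbor maps $N(\cdot)$ and $M(\cdot)$ depend only on $[\mX_i]$ and $[(\mX_i,\mZ_i)]$, each $T_b$ is the same function of the $b$-th of these vectors, so $(T_0,T_1,\dots,T_B)$ is exchangeable (conditionally, hence unconditionally), and the standard argument then shows that $p_{\rm CRT}$ stochastically dominates the uniform law on $\{1/(B+1),\dots,1\}$; thus $\P_{H_0}(p_{\rm CRT}\le\alpha)\le\alpha$ for every fixed $B\ge1$.

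For part~\ref{prop:vacon-2}, fix $\P_{(\mX,Y,\mZ)}\in\mathcal P_\Q$ that violates $H_0$. Since $Y$ is not a.s.\ a measurable function of $\mX$, Proposition~\ref{prop:AC1} gives that $\xi:=\xi(Y,\mZ\given\mX)$ is well-defined and, as $H_0$ fails, strictly positive, and Proposition~\ref{prop:AC2} gives $T_0\to\xi$ a.s. For the simulated copies I would observe that, for each fixed $b$, the triples $\{(\mX_i,Y_i^{(b)},\mZ_i)\}_{i=1}^n$ are i.i.d.\ draws from the law generated by taking $(\mX,\mZ)\sim\P_{(\mX,\mZ)}$ and then $Y^{(b)}\given\mX\sim\Q$; under that law $Y^{(b)}\indep\mZ\given\mX$ by construction and $Y^{(b)}$ is not a.s.\ a measurable function of $\mX$ (its conditional law given $\mX$ being $\Q$, the same as that of $Y$), so Proposition~\ref{prop:AC1} gives $\xi(Y^{(b)},\mZ\given\mX)=0$ and Proposition~\ref{prop:AC2} gives $T_b\to0$ a.s. Hence $\ind(T_b\ge T_0)\to\ind(0\ge\xi)=0$ a.s., bounded convergence yields $\P(T_1\ge T_0)\to0$, and writing $\mathcal D$ for the observed sample and $\pi_n:=\P(T_1\ge T_0\given\mathcal D)$ we obtain $\E[\pi_n]=\P(T_1\ge T_0)\to0$, so $\pi_n\to0$ in probability. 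Then, since conditionally on $\mathcal D$ the count $S:=\sum_{b=1}^B\ind(T_b\ge T_0)$ is $\mathrm{Bin}(B,\pi_n)$ and $\{\sT^{\Q,\xi_n}_\alpha=1\}=\{S\le\alpha(1+B)-1\}$, I would choose $\epsilon_n\downarrow0$ with $\P(\pi_n\le\epsilon_n)\to1$ and bound, for $n$ large enough that $\alpha(1+B)>1$,
\[
\P_{H_1}\big(\sT^{\Q,\xi_n}_\alpha=1\big)\ \ge\ \P(\pi_n\le\epsilon_n)-\P\big(\mathrm{Bin}(B,\epsilon_n)>\alpha(1+B)-1\big)\ \ge\ \P(\pi_n\le\epsilon_n)-\frac{\epsilon_n B}{\alpha(1+B)-1},
\]
using the stochastic monotonicity of the binomial in its success probability and Markov's inequality; as $B\to\infty$ the last term is asymptotically $\epsilon_n/\alpha\to0$, so the right-hand side tends to $1$.

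The step I expect to be the genuine obstacle is the identification of the limits of the simulated statistics $T_b$: one must verify carefully that each simulated copy really is an i.i.d.\ sample from a (conditionally independent) law meeting the hypotheses of Proposition~\ref{prop:AC2}, and in particular that the nondegeneracy condition ``$Y^{(b)}$ is not a.s.\ a measurable function of $\mX$'' is inherited from membership in $\mathcal P_\Q$; once this is in place, the remainder is the exchangeability argument of part~\ref{prop:vacon-1} and routine binomial-tail bookkeeping.
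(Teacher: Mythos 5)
Both parts are correct. Part~\ref{prop:vacon-1} follows the same standard exchangeability argument as the paper (the paper formalizes the ``ties count against rejection'' step via a randomly tie-broken $p$-value dominated by $p_{\rm CRT}$, but the mechanism is identical). Part~\ref{prop:vacon-2}, however, takes a genuinely different route. The paper, after observing $\Xi_n^{(b)}:=\ind(\xi_n^{(b)}\ge\xi_n^\circ)\to 0$ a.s.\ for each fixed $b$, handles the averaged sum $B_n^{-1}\sum_{b=1}^{B_n}\Xi_n^{(b)}$ (with $B_n$ growing) by invoking Patterson's identity for exchangeable sequences --- writing the empirical mean as a conditional expectation $\E(\Xi_n^{(1)}\given\cG_n)$ over a decreasing $\sigma$-field --- and then a reverse-martingale-type convergence theorem of Isaac; this yields $p_{\rm CRT}\to 0$ almost surely. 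You instead exploit the conditional i.i.d.\ structure directly: given the observed sample $\cD$, the count $S=\sum_{b=1}^{B}\Xi_n^{(b)}$ is $\mathrm{Bin}(B,\pi_n)$ with $\pi_n=\P(T_1\ge T_0\given\cD)$, bounded convergence gives $\E[\pi_n]\to 0$ hence $\pi_n\to 0$ in probability, and a stochastic-monotonicity-plus-Markov bound on the binomial tail, together with $B\to\infty$, finishes the argument. Your route is more elementary and self-contained, requiring no appeal to exchangeability theory or reverse martingales; what it concedes is the stronger conclusion $p_{\rm CRT}\to 0$ a.s., which the paper obtains but which is not needed for the proposition as stated. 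The care you take in identifying the limit law of each simulated copy --- in particular that $Y^{(b)}$ inherits the nondegeneracy ``not a.s.\ a function of $\mX$'' from $(\mX,Y)\overset{\sf d}{=}(\mX,Y^{(b)})$ --- is exactly the right point to flag, and your justification is correct.
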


{\begin{remark}
Of note, \citet[Corollary 23]{MR4253763} and \citet[Theorem 4]{lundborg2021conditional}, among many others, proved uniform consistency of their conditional independence tests against particular subsets of alternative hypotheses. Their results are established via some careful non-asymptotic analysis of the test statistic along such local alternatives. It will be a statistically and also technically very interesting question to examine whether $\sT^{\Q,\xi_n}_\alpha$ also enjoys similar properties. This is still an open problem.
\end{remark}}

\section{Asymptotic normality under independence}\label{sec:asynormal}

In this section we consider the asymptotic behavior of $\xi_n$
under independence of $Y$ and $(\mX,\mZ)$, which constitutes a
special subfamily of the conditional independence hypothesis $H_0$ that our subsequent
theoretic analysis shall be built on.  In this
(unconditional) independence scenario we then consider the coefficient
$\xi_n$ from Section \ref{sec:prelim} as well as a variant introduced in \citet{MR4352523}.  

In detail, \citet{MR4352523} also examined the case when
$p=0$, i.e., $\mX$ has no component. In this case, 
the conditional dependence measure $\xi$ from
\eqref{eq:xi} reduces to the unconditional dependence measure
defined analogous to $\xi^{\rm DSS}$ from \eqref{eq:DSS};
here the dimension of $\mZ$ is not necessarily one. 
They then introduced the following coefficient $\xi^\#_n$, which
extends the original proposal of \citet[Eqn.~(1)]{MR4353729} to higher dimension $q\geq 1$:
\[
\xi^\#_n
:=\frac{\sum_{i=1}^{n}\{n\min(R_i,R_{M(i)})-L_i^2\}}
{\sum_{i=1}^{n}L_i(n-L_i)}.
\yestag
\]
Here $R_{i}$ and $M(i)$ are defined in \eqref{eq:Ri} and \eqref{eq:NNindex}, respectively, with the understanding that $\mX$'s part in \eqref{eq:NNindex} is removed since it is of no component, and $L_{i}:=\sum_{j=1}^{n}\ind\big(Y_{j}\ge Y_{i}\big)$. 

\citet{MR4352523} conjectured that under
independence between $Y$ and $\mZ$, $\sqrt{n}\xi^\#_n$ obeys a
CLT. Building on results of \citet{deb2020kernel}, we are able to
derive the following theorem that, in particular, gives an affirmative answer to this
conjecture under (absolute) continuity.

\begin{theorem}[Asymptotic normality]
\label{thm:byprod}\mbox{ }
\begin{enumerate}[label=(\roman*)]
\item \label{thm:byprod1} Assume that $Y\in\R$ is continuous and independent of $(\mX, \mZ)\in\R^{p+q}$. In addition, assume $(\mX,\mZ)$ is absolutely continuous admitting a density continuous over its support. We then have as, $n\to\infty$,  
\begin{align*}
\sqrt{n}\xi_n&\stackrel{\sf d}{\longrightarrow}N\Big(0,\frac45+\frac25\Big\{\kq_{p+q}+\kq_{p}\Big\}+\frac45\Big\{\ko_{p+q}+\ko_{p}\Big\}\Big),
\end{align*}
where for any integer $d\geq 1$, $\kq_{d}$ and $\ko_{d}$ are positive
constants depending only on $d$.  Their values are
\begin{align*}
&\kq_d:=\Big\{2-I_{3/4}\Big(\frac{d+1}{2},\frac{1}{2}\Big)\Big\}^{-1},
~~~~
I_{x}(a,b):=\frac{\int_{0}^{x}t^{a-1}(1-t)^{b-1} \d t}
                 {\int_{0}^{1}t^{a-1}(1-t)^{b-1} \d t},
\yestag\label{eq:defn_kqd}\\
&\ko_{d}:=\int_{\Gamma_{d;2}}\exp\Big[-\lambda\Big\{B(\mw_1,\lVert\mw_1\rVert_{})\cup B(\mw_2,\lVert\mw_2\rVert_{})\Big\}\Big]\d(\mw_1,\mw_2),
\yestag\label{eq:defn_kod}\\
&\Gamma_{d;2}:=\Big\{(\mw_1,\mw_2)\in(\R^d)^2: \max(\lVert\mw_1\rVert_{},\lVert\mw_2\rVert_{})<\lVert\mw_1-\mw_2\rVert_{}\Big\},
\end{align*} 
where $B(\mw_1,r)$ is the ball of radius $r$ centered at $\mw_1$, and $\lambda(\cdot)$ is the Lebesgue measure. 
\item \label{thm:byprod2} Assume $Y\in\R$ is continuous and independent of $\mZ\in\R^q$. In addition, assume $\mZ$ is absolutely continuous. 
We then have, as $n\to\infty$, 
\begin{align*}
\sqrt{n}\xi^\#_n&\stackrel{\sf d}{\longrightarrow}N\Big(0,\frac25+\frac25\kq_{q}+\frac45\ko_{q}\Big).
\end{align*}
\end{enumerate}
\end{theorem}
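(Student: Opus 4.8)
Write $\xi_n=U_n/D_n$ with numerator $U_n:=\sum_{i=1}^n\{\min(R_i,R_{M(i)})-\min(R_i,R_{N(i)})\}$ and denominator $D_n:=\sum_{i=1}^n\{R_i-\min(R_i,R_{N(i)})\}$. Because $Y$ is continuous and independent of $(\mX,\mZ)$, the rank vector $(R_1,\dots,R_n)$ is a uniform random permutation of $\zahl n$ that is independent of the $\sigma$-field $\mathcal F$ generated by $[(\mX_i,\mZ_i)]_{i=1}^n$ (and the tie-breaking variables); conditionally on $\mathcal F$ the indices $N(i),M(i)$ are fixed and distinct from $i$, so $\E[\min(R_i,R_{M(i)})\mid\mathcal F]=\E[\min(R_i,R_{N(i)})\mid\mathcal F]$ and hence $\E[U_n]=0$ \emph{exactly}. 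A law of large numbers for $1$-NN graph functionals (already implicit in the consistency proof of \cite{azadkia2019simple}) gives $D_n/n^2\stackrel{\P}{\longrightarrow}\int_0^1 u(1-u)\,\d u=1/6$, so by Slutsky's lemma it suffices to prove $n^{-3/2}U_n\stackrel{\sf d}{\longrightarrow}N(0,\sigma^2)$ and then identify $36\sigma^2$ with the variance asserted in part \ref{thm:byprod1}.

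\textbf{Asymptotic normality via dependency graphs.} Using $\min(R_i,R_j)=\sum_{k=1}^n\ind(Y_k\le Y_i)\ind(Y_k\le Y_j)$ one may view $U_n$ as a difference of two rank-and-graph statistics of the kind analyzed by \cite{deb2020kernel}, one built over the $1$-NN graph on $[(\mX_i,\mZ_i)]_{i=1}^n$ and one over the $1$-NN graph on $[\mX_i]_{i=1}^n$. The structural fact that makes this work is that in $(\R^d,\lVert\cdot\rVert)$ a point is the nearest neighbor of at most a dimensional constant $c_d$ of the remaining points; consequently each of the two graphs --- with $i$ joined to $M(i)$ (resp.\ $N(i)$) and to every $j$ having $i$ as its nearest neighbor --- has uniformly bounded degree, $U_n$ admits a dependency graph of bounded degree, and the Berry--Esseen bound for dependency graphs \citep{MR2073183} invoked by \cite{deb2020kernel} yields the required central limit theorem \emph{provided} one can show $\Var(U_n)\asymp n^3$.

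\textbf{The asymptotic variance (the crux).} This is where the real work lies and where I would bring in the $1$-NN asymptotics of \citet{MR914597} and \citet{MR937563}. Expanding $\Var(U_n)=\sum_{i,i'}\cov(\Delta_i,\Delta_{i'})$ with $\Delta_i:=\min(R_i,R_{M(i)})-\min(R_i,R_{N(i)})$, and using the indicator expansion above, only finitely many ``isomorphism types'' of local index configurations contribute at order $n^3$: the diagonal $i=i'$ (whose contribution is already $\sim n^2$ because all cross-products $\ind(Y_k\le\cdot)\ind(Y_{k'}\le\cdot)$ share the common random threshold $Y_i\wedge Y_{M(i)}$ or $Y_i\wedge Y_{N(i)}$); the directed-edge types ($i'\in\{M(i),N(i)\}$ and conversely); the common-nearest-neighbor types ($M(i)=M(i')$ or $N(i)=N(i')$); and the mutual-nearest-neighbor types. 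For each I would condition on the relevant $(\mX,\mZ)$- or $\mX$-coordinates, rescale the surrounding sample around a typical point, pass to the limiting homogeneous Poisson process, and thereby turn the geometric expectations into Palm probabilities. The rank-permutation expectations (integrals of $\min(U,V)$ against the uniform law) produce the rational coefficients $\tfrac45$ and $\tfrac25$; the probabilities of the various directed-/mutual-edge configurations evaluate, via the computations of \citet{MR914597}--\citet{MR937563}, to the incomplete-Beta expression defining $\kq_d=\{2-I_{3/4}((d+1)/2,1/2)\}^{-1}$ (the value $3/4=\sin^2(\pi/3)$ tracing back to the $60^\circ$ geometry on the boundary of $\Gamma_{d;2}$); and the common-nearest-neighbor configurations contribute $\ko_d$, which is precisely the second factorial moment of a point's in-degree in the homogeneous-Poisson $1$-NN graph on $\R^d$, obtained by Mecke's formula as the integral over $\Gamma_{d;2}$ of the two-point void probability $\exp[-\lambda\{B(\mw_1,\lVert\mw_1\rVert)\cup B(\mw_2,\lVert\mw_2\rVert)\}]$. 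Collecting the contributions of the dimension-$(p+q)$ graph, the dimension-$p$ graph, and the cross-covariance between the two graph statistics (which I expect to be purely rational) yields $36\sigma^2=\tfrac45+\tfrac25(\kq_{p+q}+\kq_p)+\tfrac45(\ko_{p+q}+\ko_p)$.

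\textbf{Part \ref{thm:byprod2} and the main obstacle.} For $\xi^\#_n$ the scheme is the same but with a single $1$-NN graph, on $[\mZ_i]_{i=1}^n$ with $\mZ_i\in\R^q$; the term $L_i^2=(n+1-R_i)^2$ is a global rank quantity again handled by permutation combinatorics, the denominator obeys $n^{-3}\sum_i L_i(n-L_i)\stackrel{\P}{\longrightarrow}1/6$, and although the numerator is no longer exactly centered its mean is only $O(n^2)$, so $\sqrt n\,\E[\xi^\#_n]=O(n^{-1/2})\to0$; the variance bookkeeping then gives $36\sigma^{\#2}=\tfrac25+\tfrac25\kq_q+\tfrac45\ko_q$. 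The main obstacle throughout is the variance computation of part \ref{thm:byprod1}: establishing exactly which configuration types survive at order $n^3$ and then carrying out the de-Poissonization of \citet{MR914597}--\citet{MR937563} carefully enough --- in particular the uniform-integrability estimates that justify interchanging the limit with the integration over the point configuration --- to evaluate $\kq_d$ and $\ko_d$ in closed form. A secondary but genuinely necessary point is the variance lower bound $\Var(U_n)\gtrsim n^3$ needed before the dependency-graph Berry--Esseen bound can be applied.
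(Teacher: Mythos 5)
Your high-level strategy is the same one the paper uses: reduce $\xi_n$ to its numerator via Slutsky, apply the dependency-graph Berry--Esseen theorem of \citet{MR2073183} in the form packaged by \citet{deb2020kernel}, and identify the limiting variance through the $1$-NN asymptotics of \citet{MR914597} and \citet{MR937563}. Your exact-centering observation ($\E[U_n]=0$ from the permutation structure) is correct and slightly cleaner than the paper's conditional centering, and your reading of $\ko_d$ as the second factorial moment of the in-degree (via Palm/Mecke) and of $\kq_d$ via the mutual-NN probability both match Lemmas~\ref{lem:tech3}--\ref{lem:henze} and Lemma~\ref{lem:devr}.

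However, there is a genuine gap at exactly the point you wave at. You write that you ``expect'' the cross-covariance between the statistic built on the $\mathbb{R}^{p+q}$ NN graph $\cG_n$ and the one built on the $\mathbb{R}^{p}$ NN graph $\cG^{\mX}_n$ to be purely rational, so that the final variance decouples into a $\kq_{p+q},\ko_{p+q}$ piece, a $\kq_p,\ko_p$ piece, and rational remainders. That is not covered by anything in \citet{MR914597} or \citet{MR937563}, and it is not obvious: the two graphs are built on \emph{nested} coordinate projections of the same sample, so the events ``$j$ is the NN of $i$ in $(\mX,\mZ)$-space'' and ``$k$ is the NN of $i$ in $\mX$-space'' are strongly coupled, and the relevant Palm-type limits are new integrals, not products of the one-graph quantities. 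The paper has to prove a dedicated lemma (Lemma~\ref{lem:tech6}) establishing that the mixed configuration counts converge to $0$, $0$, and $1$ --- i.e.\ that the two graphs are asymptotically uncorrelated in exactly the sense needed --- and this is precisely why part~\ref{thm:byprod1} demands the \emph{stronger} hypothesis that the density of $(\mX,\mZ)$ be continuous on its whole support, rather than merely continuous a.e.\ as in Lemma~\ref{lem:henze}. Your proposal neither supplies this argument nor explains why the extra continuity is assumed, so the variance identification for part~\ref{thm:byprod1} is incomplete. A smaller issue: the raw rank statistic $U_n$ does \emph{not} itself admit a bounded-degree dependency graph, since each $R_i$ depends on all $n$ observations; one must first pass to the H\'ajek-projected version with $F_Y(Y_i)$ in place of $R_i/n$ (Lemma~\ref{lem:tech4} here, or \citet[Theorem~4.1]{deb2020kernel}) before the Chen--Shao bound is applicable, and you should say so explicitly rather than assert the bounded-degree dependency graph directly for $U_n$. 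Part~\ref{thm:byprod2}, which involves only a single graph, is not affected by the cross-covariance issue and your sketch there is essentially sound.
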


\begin{remark}
  The asymptotic variance of $\sqrt{n}\xi_n$ (or $\sqrt{n}\xi_n^{\#}$)
  under independence between $Y$ and $(\mX,\mZ)$ (or $\mZ$) is seen to
  be distribution-free, i.e., its value will not change with the
  particular distribution of $\P_{(\mX,Y,\mZ)}$ as long as the
  (absolute) continuity conditions in Theorem \ref{thm:byprod}
  hold. This (asymptotic) distribution-freeness is in line with
  similar observations made earlier for related problems such as two-sample goodness-of-fit tests, where \citet{MR532236} extended \citet{MR2083}'s rank-based run test to multivariate spaces via minimum spanning trees; see, also, \citet{MR947577}, \citet{MR1212489}, \citet{MR1701112}, and \citet{MR3961499} for other notable results along that track, {and  \citet{MR3811758, MR4474546, lin2021boosting, lin2022limit} for more related work.}
\end{remark}

\begin{remark}
  It may be interesting to note that the asymptotic variance of
  $\sqrt{n}\xi_n^{\#}$ is strictly larger than $2/5$, the asymptotic
  variance of the rank correlation from Equation~(1) in
  \citet{MR4353729}.  However, this observation
  should not be interpreted as an advantage of Chatterjee's rank
  correlation over $\sqrt{n}\xi_n^{\#}$ in terms of statistical efficiency. 
  As a matter of fact, both are powerless when used for testing independence; 
  cf.~\citet[Theorem~1]{MR4430960} and Theorem \ref{thm:powerless} ahead.
\end{remark}

\begin{remark}
  In order to derive the above CLTs for $\xi_n$ and $\xi_n^{\#}$, we
  adopt techniques devised in \citet{deb2020kernel}.  We highlight here
  some of these technical ingredients of our proof. 
  \citet{deb2020kernel} were focused on establishing
  general asymptotic results for graph-based statistics with an
  additional self-normalization step.  In the present
  context of our Theorem \ref{thm:byprod}, following \citet[Theorem
  4.1]{deb2020kernel}, it is readily shown that
\[
\sqrt{n}\xi_n\Big/\Big\{\hat\Var(\xi_n)\Big\}^{1/2} \stackrel{\sf d}{\longrightarrow} N(0,1),
\]
for some data-based normalization statistic $\hat\Var(\xi_n)$. Our
main focus, accordingly, can be understood as proving the existence as
well as deriving the value of the limit of $\hat\Var(\xi_n)$ as
$n\to\infty$. This problem was not touched upon in
\citet{deb2020kernel} for a good reason, but is crucial for our
analysis of the power ahead. To fill the gap, our proof draws on the remarkable techniques developed in \citet{MR914597} and  \citet{MR937563}, which will be detailed blow.
\end{remark}

The asymptotic variances of $\xi_n$ and $\xi_n^{^\#}$ may look
mysterious but they are in fact connected to the behavior of nearest
neighbor graphs.  We present here a series of results that illustrate
this connection.  The first is a well-known result by \citet[Corollary
S1]{MR682809} on maximum degrees in 1-NN graphs.

\begin{lemma}[Maximum degree in nearest neighbor graphs]\label{lem:MD}
  Let $\mw_1,\dots,\mw_n$ be any collection of $n$ distinct points in
  $\R^d$.  Then there exists a constant $\kC_d$ depending only on the dimension $d$
  such that $\mw_1$ is the nearest neighbor of at most $\kC_d$
  points from $\{\mw_2,\dots,\mw_n\}$.
\end{lemma}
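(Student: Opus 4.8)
The plan is to reproduce the short geometric argument behind \citet[Corollary~S1]{MR682809}, which reduces the statement to an elementary packing bound for unit vectors. The key observation is an angular separation property: if two distinct points $\mw_i,\mw_j$ (with $i,j\in\{2,\dots,n\}$) both have $\mw_1$ as a nearest neighbor, then $\lVert\mw_i-\mw_1\rVert\le\lVert\mw_i-\mw_j\rVert$ and $\lVert\mw_j-\mw_1\rVert\le\lVert\mw_j-\mw_i\rVert$, so in the (nondegenerate) triangle with vertices $\mw_1,\mw_i,\mw_j$ the side opposite $\mw_1$ is a longest side. Writing $\theta:=\angle\,\mw_i\mw_1\mw_j$ and applying the law of cosines, the inequality $\lVert\mw_i-\mw_j\rVert^2\ge\lVert\mw_i-\mw_1\rVert^2$ rearranges to $\lVert\mw_j-\mw_1\rVert\ge 2\lVert\mw_i-\mw_1\rVert\cos\theta$, and symmetrically $\lVert\mw_i-\mw_1\rVert\ge 2\lVert\mw_j-\mw_1\rVert\cos\theta$. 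If $\cos\theta<0$ then trivially $\theta\ge\pi/3$; otherwise all four quantities above are nonnegative and multiplying the two inequalities (using that the $\mw$'s are distinct, so $\lVert\mw_i-\mw_1\rVert,\lVert\mw_j-\mw_1\rVert>0$) yields $1\ge 4\cos^2\theta$, hence $\cos\theta\le 1/2$ and $\theta\ge\pi/3$ in all cases.

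Next I would convert this into a volume bound. Let $\mathcal I\subseteq\{2,\dots,n\}$ collect the indices $i$ for which $\mw_1$ is a nearest neighbor of $\mw_i$, and set $\mu_i:=(\mw_i-\mw_1)/\lVert\mw_i-\mw_1\rVert$ for $i\in\mathcal I$. By the previous paragraph these are unit vectors with pairwise angle at least $\pi/3$, so $\lVert\mu_i-\mu_j\rVert=2\sin\!\big(\angle(\mu_i,\mu_j)/2\big)\ge 2\sin(\pi/6)=1$ for all distinct $i,j\in\mathcal I$. Consequently the Euclidean balls $B(\mu_i,1/4)$, $i\in\mathcal I$, are pairwise disjoint and all contained in $B(\mathbf 0,5/4)$, so comparing Lebesgue measures gives $\card(\mathcal I)\,\lambda\{B(\mathbf 0,1/4)\}\le\lambda\{B(\mathbf 0,5/4)\}$, i.e.\ $\card(\mathcal I)\le 5^d$. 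The lemma then holds with $\kC_d:=5^d$; any bound depending only on $d$ is all that is needed downstream.

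I do not expect a genuine obstacle here, as the argument is entirely elementary; the only points warranting a line of care are (i) the degenerate configurations (collinear triples are automatically excluded by the same distance inequalities, since $\theta=0$ would contradict $\mw_1$ being a nearest neighbor of the farther point), and (ii) the convention that ``nearest neighbor'' is read with non-strict distance inequalities, so that the random tie-breaking used in the definition of $N(i)$ and $M(i)$ is already subsumed in the count $\card(\mathcal I)$.
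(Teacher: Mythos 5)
Your argument is correct and self-contained. The paper itself does not prove this lemma but merely cites \citet[Corollary~S1]{MR682809}; what you have written is precisely the classical angle--separation--plus--packing argument behind that citation, so nothing is lost by spelling it out. All three steps check out. The law-of-cosines computation from $\lVert\mw_i-\mw_j\rVert\ge\max(\lVert\mw_i-\mw_1\rVert,\lVert\mw_j-\mw_1\rVert)$ does yield $b\ge 2a\cos\theta$ and $a\ge 2b\cos\theta$ with $a,b>0$, hence $\cos\theta\le 1/2$, and this covers the collinear configuration automatically since $\theta=0$ would contradict the two distance inequalities. The conversion to unit vectors is right: pairwise angles $\ge\pi/3$ give $\lVert\mu_i-\mu_j\rVert=2\sin(\theta/2)\ge 1$, so the balls $B(\mu_i,1/4)$ are disjoint and all lie in $B(\mathbf{0},5/4)$, forcing $\card(\mathcal I)\,(1/4)^d\le(5/4)^d$, i.e.\ $\card(\mathcal I)\le 5^d$. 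And you are right that working with the non-strict ``a nearest neighbor'' relation subsumes the tie-breaking convention in the definitions of $N(i)$ and $M(i)$. The bound $5^d$ is of course far from tight --- the sharp constant is a kissing-number-type quantity --- but the lemma only requires finiteness, and the paper's convention of taking $\kC_d$ to be the \emph{smallest} constant for which the property holds is compatible with your argument establishing that such a finite constant exists.
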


The notation $\kC_d$, representing a constant upper bound of the
maximum degree, will be used throughout the manuscript.  For
convenience, we take $\kC_d$ as the smallest constant for which
the property in Lemma~\ref{lem:MD} holds.


In the following, consider a sample $[\mW_i]_{i=1}^n$ comprised of $n$
independent copies of a random vector $\mW\in\R^d$.  Let $\cG_n$ be
the associated directed nearest neighbor graph (NNG), i.e.,  $\cG_n$
has vertex set $\zahl{n}$ and contains a directed edge from $i$ to
$j$ whenever $\mW_j$ is a nearest neighbor of $\mW_i$.  We write
$\cE(\cG_n)$ for the edge set of $\cG_n$. 

The parameter $\kq_d$ in Theorem \ref{thm:byprod} comes from the following crucial result of \citet[Theorem~2]{MR937563}.

\begin{lemma}[Expected number of nearest-neighbor pairs]\label{lem:devr} 
As long as $\mW$ is absolutely continuous, we have
\[
  \E\bigg(\frac{1}{n} \#\Big  \{(i,j)~\text{distinct}: \, i\to j, j\to i\in \cE(\cG_n)\Big\}
\bigg)
\;\longrightarrow\; \frac{V_d}{U_d}=\kq_d,
\]
where $V_d$ is the volume of the unit ball in $\R^d$, and $U_d$ is the volume of the union of two unit balls in $\R^d$ whose centers are a unit distance apart. The explicit value of $\kq_d$ shown in \eqref{eq:defn_kqd} is given by \citet[Equation (3)]{MR2813331}.
\end{lemma}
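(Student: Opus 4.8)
The plan is to reduce the left-hand side to a single two-point probability and then evaluate it by a localization (rescaling) argument. By exchangeability of $\mW_1,\dots,\mW_n$,
\[
\E\Big(\tfrac1n\,\#\big\{(i,j)\ \text{distinct}: i\to j,\ j\to i\in\cE(\cG_n)\big\}\Big)=(n-1)\,p_n,\qquad p_n:=\Pr\big(\mW_1\to\mW_2\ \text{and}\ \mW_2\to\mW_1\big).
\]
Conditioning on $(\mW_1,\mW_2)=(\mx,\my)$ and writing $r:=\lVert\mx-\my\rVert$, the points $\mW_1,\mW_2$ form a reciprocal pair precisely when none of the remaining $n-2$ points lies in $B_{\mx,\my}:=B(\mx,r)\cup B(\my,r)$; here absolute continuity of $\mW$ makes ties $\P_\mW$-null and, for every fixed $\mx$, makes $s\mapsto\P_\mW(B(\mx,s))$ continuous (spheres are $\P_\mW$-null). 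With $\mu:=\P_\mW$ and density $q:=q_\mW$ this gives the exact identity
\[
(n-1)\,p_n=(n-1)\int\!\!\int\big(1-\mu(B_{\mx,\my})\big)^{n-2}\,\mathrm d\mu(\my)\,\mathrm d\mu(\mx),
\]
and it remains to show the right-hand side tends to $V_d/U_d$.

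Two elementary geometric inputs drive the computation: under the dilation $\mz\mapsto(\mz-\mx)/r$ the set $B_{\mx,\my}$ becomes the union of two unit balls with centres at unit distance, so $\lambda(B_{\mx,\my})=r^{d}U_d$ while $\lambda(B(\mx,r))=r^{d}V_d$; and $\int_{\R^d}e^{-\lVert\mv\rVert^{d}}\,\mathrm d\mv=V_d$ by polar coordinates. The mass of the integrand concentrates on $\mu(B_{\mx,\my})=O(n^{-1})$, i.e. $r=O(n^{-1/d})$, so substitute $\my=\mx+n^{-1/d}\mw$; the inner integral becomes $n^{-1}\!\int q(\mx+n^{-1/d}\mw)\,(1-\mu(B_{\mx,\mx+n^{-1/d}\mw}))^{n-2}\,\mathrm d\mw$. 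At every Lebesgue point $\mx$ of $q$ with $q(\mx)>0$, the Lebesgue differentiation theorem applied to $B_{\mx,\mx+n^{-1/d}\mw}$ (which shrink to $\mx$ with bounded eccentricity, being sandwiched between $B(\mx,n^{-1/d}\lVert\mw\rVert)$ and $B(\mx,2n^{-1/d}\lVert\mw\rVert)$) gives $(n-2)\,\mu(B_{\mx,\mx+n^{-1/d}\mw})\to q(\mx)\lVert\mw\rVert^{d}U_d$, hence $(1-\mu(B_{\mx,\mx+n^{-1/d}\mw}))^{n-2}\to\exp(-q(\mx)\lVert\mw\rVert^{d}U_d)$ for each $\mw$; combining with $q(\mx+n^{-1/d}\mw)\to q(\mx)$ in the averaged sense supplied by the same theorem and the change of variables $\mv=(q(\mx)U_d)^{1/d}\mw$, the inner integral multiplied by $n$ converges to $q(\mx)\int e^{-q(\mx)\lVert\mw\rVert^{d}U_d}\mathrm d\mw=V_d/U_d$, a \emph{constant} in $\mx$. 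Integrating against $\mathrm d\mu(\mx)=q(\mx)\,\mathrm d\mx$ and using $(n-1)/n\to1$ then formally yields the claimed limit $V_d/U_d$.

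To make the last step rigorous one must justify two interchanges of limit and integral. For the outer integral a clean uniform majorant is available: since $B_{\mx,\my}\supseteq B(\mx,\lVert\mx-\my\rVert)$ and, for $Y\sim\mu$ conditionally on $\mx$, the quantity $\mu(B(\mx,\lVert\mx-Y\rVert))$ stochastically dominates $\mathrm{Unif}(0,1)$ by the probability integral transform (valid because $s\mapsto\mu(B(\mx,s))$ is continuous), one has
\[
n\!\int q(\my)\,(1-\mu(B_{\mx,\my}))^{n-2}\,\mathrm d\my\ \le\ n\,\E_{Y\sim\mu}\big[e^{-(n-2)\mu(B(\mx,\lVert\mx-Y\rVert))}\big]\ \le\ \tfrac{n}{n-2},
\]
so the outer integrand is dominated by a multiple of $q(\mx)$ and dominated convergence applies once the inner, $\mx$-by-$\mx$ limit is in hand. \textbf{The main obstacle} is exactly that inner limit: because the lemma assumes only absolute continuity of $\mW$ (no boundedness or continuity of $q$), the convergence of $n\!\int q(\mx+n^{-1/d}\mw)(1-\mu(B_{\mx,\mx+n^{-1/d}\mw}))^{n-2}\mathrm d\mw$ at a.e.\ $\mx$ must be extracted from the Lebesgue differentiation theorem rather than a naive continuity argument — via a truncation $\lVert\mw\rVert\le T$ (dominated convergence on a fixed ball) together with a tail estimate for $\lVert\mw\rVert>T$ that is uniform in $n$ (again from the single-ball lower bound on $\mu(B_{\mx,\mx+n^{-1/d}\mw})$). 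This is where the genuine work lies and is precisely what is carried out in \citet[Theorem~2]{MR937563}; if one additionally assumes $q$ continuous, as in Theorem~\ref{thm:byprod}\ref{thm:byprod1}, everything collapses to plain continuity and dominated convergence.

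Finally, to obtain the explicit value in \eqref{eq:defn_kqd} one writes $U_d=2V_d-\lambda(B_1\cap B_2)$ for two unit balls $B_1,B_2$ whose centres are a unit distance apart. The intersection splits along the bisecting hyperplane (at distance $1/2$ from each centre) into two congruent spherical caps, each a cap of a unit ball cut at distance $1/2$ from its centre; a change of variables $u=1-t^2$ in the cap integral identifies its volume as $\tfrac12 V_d\,I_{3/4}(\tfrac{d+1}{2},\tfrac12)$ in terms of the regularized incomplete beta function $I_x(a,b)$ of \eqref{eq:defn_kqd} (using $V_{d-1}\,B(\tfrac{d+1}{2},\tfrac12)=V_d$). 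Hence $\lambda(B_1\cap B_2)=V_d\,I_{3/4}(\tfrac{d+1}{2},\tfrac12)$, so $U_d/V_d=2-I_{3/4}(\tfrac{d+1}{2},\tfrac12)$ and $\kq_d=V_d/U_d=\{2-I_{3/4}(\tfrac{d+1}{2},\tfrac12)\}^{-1}$, the formula recorded in \citet[Equation (3)]{MR2813331}.
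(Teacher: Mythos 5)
The paper does not prove this lemma at all: it is quoted verbatim from \citet[Theorem~2]{MR937563} (Devroye), with the closed-form value of $\kq_d$ taken from \citet[Equation~(3)]{MR2813331}. So your proposal is not an alternative to the paper's argument but a reconstruction of the cited external proof. That reconstruction is correct in structure and in all its computations: the reduction by exchangeability to $(n-1)p_n$ with $p_n=\P(\mW_1\to\mW_2,\ \mW_2\to\mW_1)$, the exact identity $p_n=\iint(1-\mu(B_{\mx,\my}))^{n-2}\,\d\mu(\my)\,\d\mu(\mx)$, the rescaling $\my=\mx+n^{-1/d}\mw$, the use of $\lambda(B_{\mx,\my})=r^d U_d$, the evaluation $\int_{\R^d}e^{-\lVert\mv\rVert^d}\d\mv=V_d$, and the derivation of $U_d/V_d=2-I_{3/4}\big(\tfrac{d+1}{2},\tfrac12\big)$ via the lens (two caps at height $1/2$) and the identity $V_{d-1}\,B\big(\tfrac{d+1}{2},\tfrac12\big)=V_d$ are all right. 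Your uniform majorant $n\!\int q(\my)(1-\mu(B_{\mx,\my}))^{n-2}\,\d\my\le n/(n-2)$ via the probability–integral–transform argument applied to $s\mapsto\mu(B(\mx,s))$ is also valid and gives the dominated convergence needed for the outer $\d\mu(\mx)$ integral.

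You are also right that the one genuinely delicate step is the inner, $\mx$-by-$\mx$ limit under the sole hypothesis of absolute continuity. The difficulty has two layers: $(n-2)\,\mu(B_{\mx,\mx+n^{-1/d}\mw})\to q(\mx)\lVert\mw\rVert^d U_d$ at a.e.\ $\mx$ requires the Lebesgue differentiation theorem for the bounded-eccentricity family $B_{\mx,\mx+n^{-1/d}\mw}$; and, separately, the factor $q(\mx+n^{-1/d}\mw)$ need not converge pointwise in $\mw$, so the passage to the limit in $\int q(\mx+n^{-1/d}\mw)(1-\mu(\cdot))^{n-2}\,\d\mw$ must be done through a truncation-plus-tail argument rather than naive dominated convergence. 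You flag both issues and correctly attribute the details to Devroye; if you wanted a self-contained write-up it is precisely there that more would have to be said. As it stands, the proposal is a faithful and correct sketch of the cited theorem.
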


The parameter $\ko_d$ in Theorem \ref{thm:byprod}, on the other hand,
comes from the following new lemma, which is developed in this manuscript.
The lemma builds on an earlier result of \citet{MR914597}. 

\begin{lemma}\label{lem:tech3}
As long as $\mW$ is absolutely continuous, we have
\[
 \E\bigg(\frac{1}{n} \#\Big  \{(i,j,k)~\text{distinct}: \, i\to k, j\to k\in \cE(\cG_n)\Big\}
\bigg)
\;\longrightarrow\;\ko_d,
\]
where $\ko_d=\kd_{d;2}$, a quantity defined in Lemma \ref{lem:henze} below.
\end{lemma}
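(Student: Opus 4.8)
The plan is to express the quantity through the in-degrees of the directed nearest neighbour graph $\cG_n$, reduce it to the probability that a fixed sample point is the common nearest neighbour of two others, and then rescale the ambient space by the factor $n^{1/d}$ around that common point, in the spirit of \citet{MR914597}. Since $\mW$ is absolutely continuous, almost surely no two of the pairwise distances among $\mW_1,\dots,\mW_n$ coincide, so every vertex of $\cG_n$ has out-degree exactly one and
\[
\#\big\{(i,j,k)\ \text{distinct}:\ i\to k,\ j\to k\in\cE(\cG_n)\big\}=\sum_{k=1}^{n}D_k(D_k-1),
\]
where $D_k:=\#\{i\ne k:\,i\to k\in\cE(\cG_n)\}$ is the in-degree of $k$. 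By Lemma~\ref{lem:MD}, $D_k\le\kC_d$ deterministically, so the left-hand side is at most $n\,\kC_d(\kC_d-1)$; hence it suffices to identify the limit of its expectation. By exchangeability, $\E\big(\tfrac1n\sum_{k=1}^{n}D_k(D_k-1)\big)=(n-1)(n-2)\Pr(A_n)$, where $A_n$ is the event that $\mW_3$ is the nearest neighbour both of $\mW_1$ and of $\mW_2$.

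Next I would condition on $(\mW_1,\mW_2,\mW_3)=(\mw_1,\mw_2,\mw_3)$ and set $r_i:=\lVert\mw_i-\mw_3\rVert$ and $B_i:=B(\mw_i,r_i)$ for $i=1,2$. Then $A_n$ holds exactly when (i) $\max(r_1,r_2)<\lVert\mw_1-\mw_2\rVert$, so that $\mw_1\notin B_2$ and $\mw_2\notin B_1$, and (ii) none of the remaining $n-3$ points falls in $B_1\cup B_2$; by independence the latter has conditional probability $\big(1-\P_{\mW}(B_1\cup B_2)\big)^{n-3}$. Writing $\mv_i:=n^{1/d}(\mw_i-\mw_3)$, condition (i) becomes $(\mv_1,\mv_2)\in\Gamma_{d;2}$ by the scale invariance of $\Gamma_{d;2}$. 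Substituting $\mw_i=\mw_3+n^{-1/d}\mv_i$ for $i=1,2$ (Jacobian $n^{-2}$, and $(n-1)(n-2)n^{-2}\to1$) recasts $(n-1)(n-2)\Pr(A_n)$ as
\[
\big(1+o(1)\big)\int q_{\mW}(\mw_3)\int_{\Gamma_{d;2}}\big(1-\P_{\mW}(B_1\cup B_2)\big)^{n-3}\,q_{\mW}(\mw_1)q_{\mW}(\mw_2)\,\d(\mv_1,\mv_2)\,\d\mw_3,
\]
with $\mw_i:=\mw_3+n^{-1/d}\mv_i$ understood throughout. On the scale $\lVert\mw_i-\mw_3\rVert=\Theta(n^{-1/d})$ one has $\lambda(B_1\cup B_2)=n^{-1}\lambda\big(B(\mv_1,\lVert\mv_1\rVert)\cup B(\mv_2,\lVert\mv_2\rVert)\big)$ and, at Lebesgue-a.e.\ $\mw_3$, $\P_{\mW}(B_1\cup B_2)=q_{\mW}(\mw_3)\lambda(B_1\cup B_2)(1+o(1))$ by the Lebesgue differentiation theorem, so that $\big(1-\P_{\mW}(B_1\cup B_2)\big)^{n-3}\to\exp\big[-q_{\mW}(\mw_3)\,\lambda\big(B(\mv_1,\lVert\mv_1\rVert)\cup B(\mv_2,\lVert\mv_2\rVert)\big)\big]$; the factor $q_{\mW}(\mw_1)q_{\mW}(\mw_2)$ is, after integration in $(\mv_1,\mv_2)$, to be replaced by $q_{\mW}(\mw_3)^{2}$ using continuity of the density where available and Lebesgue-point averaging in general.

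The hard part will be to make this limit exchange rigorous: one must bound, uniformly in $n$, the contributions of atypical scales (where $\lVert\mw_i-\mw_3\rVert$ is not of order $n^{-1/d}$) and of regions where $q_{\mW}$ is small, and control the replacement $q_{\mW}(\mw_1)q_{\mW}(\mw_2)\to q_{\mW}(\mw_3)^{2}$. This is exactly the uniform control developed by \citet{MR914597} for nearest-neighbour interrelation statistics, which I would package as Lemma~\ref{lem:henze}, with Lemma~\ref{lem:MD} furnishing the a~priori bound that precludes any blow-up. Granting it, the limit equals
\[
\int q_{\mW}(\mw_3)^{3}\Big(\int_{\Gamma_{d;2}}\exp\big[-q_{\mW}(\mw_3)\,\lambda\big(B(\mv_1,\lVert\mv_1\rVert)\cup B(\mv_2,\lVert\mv_2\rVert)\big)\big]\,\d(\mv_1,\mv_2)\Big)\d\mw_3.
\]
Finally, for fixed $\mw_3$ I would rescale the integration variables by $q_{\mW}(\mw_3)^{1/d}$: this preserves the cone $\Gamma_{d;2}$, contributes a Jacobian factor $q_{\mW}(\mw_3)^{-2}$, and, since $\mv\mapsto\lambda(B(\mv,\lVert\mv\rVert))$ is homogeneous of degree $d$, cancels the factor $q_{\mW}(\mw_3)$ in the exponent, so that the inner integral equals $q_{\mW}(\mw_3)^{-2}\ko_d$ with $\ko_d$ as in \eqref{eq:defn_kod}. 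The limit is therefore $\ko_d\int q_{\mW}(\mw_3)\,\d\mw_3=\ko_d$, which in particular identifies $\kd_{d;2}=\ko_d$ and completes the argument.
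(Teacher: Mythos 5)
Your argument is correct but takes a different route from the paper's. Both start from the same identity $\#\{(i,j,k)~\text{distinct}: i\to k,\ j\to k\in\cE(\cG_n)\}=\sum_{k}d_k^-(d_k^--1)$. The paper then finishes in one line: since each vertex has out-degree exactly one, $\E(d_1^-)=1$, so $\E\{d_1^-(d_1^--1)\}=\E\{(d_1^-)^2\}-\E(d_1^-)=\Var(d_1^-)$, and the stated convergence $\Var(d_1^-)\to\kd_{d;2}$ in Lemma~\ref{lem:henze} gives the result directly. You instead rewrite $\E\{d_1^-(d_1^--1)\}$ as $(n-1)(n-2)\Pr(A_n)$ and carry out the local scaling analysis from scratch (condition on $(\mW_1,\mW_2,\mW_3)$, rescale by $n^{1/d}$, invoke the Lebesgue differentiation theorem, and rescale once more by $q_{\mW}(\mw_3)^{1/d}$ to land on the explicit integral). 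That is correct in outline, and it essentially reconstructs the computation underlying Henze's variance limit rather than citing it; what you gain is a self-contained, explicit derivation of $\ko_d$, at the cost of redoing work that Lemma~\ref{lem:henze} already packages. Two minor points: (a) you describe deferring the ``uniform control'' to Lemma~\ref{lem:henze}, but that lemma already \emph{states} $\Var(d_1^-)\to\kd_{d;2}$, which is precisely your target, so the cleaner move is to use the $\E(d_1^-)=1$ identity and quote that limit outright; (b) you present ``$\kd_{d;2}=\ko_d$'' as an identification to be derived, but comparing \eqref{eq:defn_kod} with the formula for $\kd_{d;r}$ in Lemma~\ref{lem:henze} shows they are the same integral by definition, so no computation is needed there.
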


The next lemma is due to \citet[Theorem~1.4,  Corollaries~1.5 and 1.6]{MR914597}.

\begin{lemma}[Expected number of vertices of specified degree]\label{lem:henze}
Let $d^{-}_j$ be the in-degree of vertex $\mW_j$ in $\cG_n$, i.e.,
$d^{-}_j:=\#\{i: i\to j\in \cE(\cG_n)\}$. 
If $\mW$ is absolutely continuous with a density continuous a.e., then for any integer $k \in [0, \kC_{d}]$, we have
\[
\E\Big(\frac{1}{n}\#\{j: d^{-}_j=k\}\Big)\;\longrightarrow\; \kp_{d;k}
~~~~\text{and}~~~~
\Var(d^{-}_1)\;\longrightarrow\;\kd_{d;2},
\]
where
\[
\kp_{d;k}=\frac{1}{k!}\sum_{u=0}^{\kC_d-k}\frac{1}{u!}(-1)^{u}\kd_{d;k+u},
~~~~0\le k\le \kC_d,
\]
and
\begin{align*}
&\kd_{d;0}=\kd_{d;1}=1,~~~~\kd_{d;r}=\int_{\Gamma_{d;r}}\exp\Big[-\lambda\Big\{\bigcup_{i=1}^{r}B\big(\mw_i,\lVert\mw_i\rVert_{}\big)\Big\}\Big]\d(\mw_1,\ldots,\mw_r),\\
&\Gamma_{d;r}=\Big\{(\mw_1,\dots,\mw_r)\in(\R^d)^r: \lVert\mw_i\rVert_{}<\min_{1\le j\le r: j\ne i}\lVert\mw_i-\mw_j\rVert_{}, 1\le i\le r\Big\},~~~~2\le r\le \kC_d.
\end{align*}
Notice that $\kp_{d;k}\in[0,1]$ is a constant only depending on $d$ and $k$. 
\end{lemma}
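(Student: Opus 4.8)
The plan is to pull the statement back to the first two factorial moments of the in-degree $d^-_1$ and to evaluate those by a local rescaling argument; this reproduces, in the form we need, the content of \citet[Theorem~1.4, Corollaries~1.5 and 1.6]{MR914597}, so one could simply cite it, but it is worth outlining the argument.

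\emph{From degree counts to factorial moments.} By exchangeability of $\mW_1,\dots,\mW_n$ we have $\E\{n^{-1}\#\{j:d^-_j=k\}\}=\P(d^-_1=k)$; and since under absolute continuity ties in interpoint distances occur with probability zero, a.s.\ every vertex of $\cG_n$ has out-degree exactly $1$, so $\sum_{j}d^-_j=\#\cE(\cG_n)=n$ and hence $\E d^-_1=1$ for all $n$. By Lemma~\ref{lem:MD} we have $d^-_1\le\kC_d$, so the inclusion--exclusion identity $\P(X=k)=\sum_{r\ge k}(-1)^{r-k}\binom{r}{k}\E\binom{X}{r}$ (a finite sum here, truncating at $r=\kC_d$) reduces both assertions to showing
\[
\E\binom{d^-_1}{r}\;\longrightarrow\;\frac{\kd_{d;r}}{r!},\qquad 0\le r\le\kC_d,
\]
since this yields $\P(d^-_1=k)\to\frac{1}{k!}\sum_{u=0}^{\kC_d-k}\frac{(-1)^u}{u!}\kd_{d;k+u}=\kp_{d;k}$, and, writing $\Var(d^-_1)=2\E\binom{d^-_1}{2}+\E d^-_1-(\E d^-_1)^2=2\E\binom{d^-_1}{2}$, also $\Var(d^-_1)\to\kd_{d;2}$.

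\emph{Factorial moments via local rescaling.} Reading $\binom{d^-_1}{r}$ as the number of $r$-subsets of $\{2,\dots,n\}$ all of whose points have $\mW_1$ as nearest neighbor, exchangeability gives
\[
\E\binom{d^-_1}{r}=\binom{n-1}{r}\,\P\big(\mW_1\text{ is the nearest neighbor of each of }\mW_2,\dots,\mW_{r+1}\big).
\]
Conditioning on $\mW_1=\mw$ at a point where the density $q=q_{\mW}$ is positive and continuous (such $\mw$ carry full mass), and substituting $\mW_i=\mw+(nq(\mw))^{-1/d}\mv_i$ for $2\le i\le r+1$: continuity of $q$ at $\mw$ makes the joint density of $(\mv_2,\dots,\mv_{r+1})$ converge to $n^{-r}$ on the $O(1)$ scale that matters; the event that $\mW_1$ is closer to each $\mW_i$ than the other $\mW_{i'}$ are becomes $(\mv_2,\dots,\mv_{r+1})\in\Gamma_{d;r}$; and, conditionally on $\mW_1,\dots,\mW_{r+1}$, the chance that none of the remaining $n-r-1$ points lies in $\bigcup_i B(\mW_i,\lVert\mW_i-\mW_1\rVert)$ equals $\big(1-\P_{\mW}\{\bigcup_i B(\mW_i,\lVert\mW_i-\mW_1\rVert)\}\big)^{n-r-1}$, which tends to $\exp\{-\lambda(\bigcup_{i=1}^r B(\mv_i,\lVert\mv_i\rVert))\}$ because each rescaled ball has $\P_{\mW}$-mass $\sim n^{-1}\lambda(B(\mv_i,\lVert\mv_i\rVert))$ (the factors $q(\mw)$ cancel). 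Combining the three contributions with $\binom{n-1}{r}\sim n^r/r!$ and integrating out $\mw$ (which contributes $\int q=1$) gives the limit $\kd_{d;r}/r!$.

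\emph{Main obstacle.} The delicate part is making the last step rigorous: one needs an $n$-uniform integrable majorant to justify the limit both under the $\mw$-integral (where $q$ may fail to be continuous only on a null set, harmless given the a.e.-continuity hypothesis and the boundedness supplied by Lemma~\ref{lem:MD}) and under the $(\mv_i)$-integral — here the constraint $(\mv_2,\dots,\mv_{r+1})\in\Gamma_{d;r}$ together with the exponential factor prevents the $\mv_i$ from escaping to infinity — and one must control the error between $(1-p_n)^{n-r-1}$ and $e^{-np_n}$ uniformly in the configuration. These are precisely the estimates of \citet{MR914597}, which we invoke. Throughout, finiteness of $\kC_d$ (Lemma~\ref{lem:MD}) both truncates the relevant sums and gives uniform integrability of $d^-_1$, which is what legitimizes passing from convergence of the factorial moments to convergence of $\P(d^-_1=k)$ and of $\Var(d^-_1)$.
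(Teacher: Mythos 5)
The paper does not give a proof of this lemma; it is stated as a direct quotation of Henze's results (\citet[Theorem~1.4, Corollaries~1.5 and 1.6]{MR914597}), which your outline correctly reconstructs. Your reduction is sound: every vertex has out-degree $1$ under absolute continuity, so $\sum_j d^-_j = n$ and $\E d^-_1 = 1$; $d^-_1\le\kC_d$ by Lemma~\ref{lem:MD} turns the Jordan/Waring inclusion--exclusion into a finite sum and gives $\Var(d^-_1)=2\E\binom{d^-_1}{2}$, so both conclusions reduce to $\E\binom{d^-_1}{r}\to\kd_{d;r}/r!$; and the local-rescaling computation of $\binom{n-1}{r}\P(\mW_1\text{ is NN of }\mW_2,\dots,\mW_{r+1})$ correctly produces the region $\Gamma_{d;r}$, the $n^{-r}$ from the Jacobian and density cancellation, and the exponential void probability. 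This is in fact the same machinery the paper itself deploys in the proof of Lemma~\ref{lem:tech6}, where Jordan's formula is ``generalized to two systems of events'' and the Bonferroni sums $\beta_{r,s}$ play the role of your $\E\binom{d^-_1}{r}$, with $\beta_{r,0}\to\kd_{d;r}/r!$ invoked as Henze's equation~(3.1). The one cosmetic difference is that Henze (and the paper's Lemma~\ref{lem:tech6}) augments the sample with an extra point $\mW_0$ and tracks its in-degree, whereas you work directly with $\mW_1$ among the original $n$ points; asymptotically this is immaterial, and your version is more literally aligned with the lemma as stated. The dominated-convergence and uniformity estimates you flag as ``the main obstacle'' are indeed the non-trivial technical content of Henze's proof, and citing him for those is exactly what the paper does.
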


\begin{remark}
  We note that in Theorem \ref{thm:byprod}\ref{thm:byprod1}, a slightly stronger
  condition (continuity over its support) is required for the density
  function in order to establish CLTs.  This additional requirement is
  made for handling the ``cross terms'' of 1-NN graphs built on
  $[(\mX_i,\mZ_i)]_{i=1}^n$ and $[\mX_i]_{i=1}^n$ separately
  (cf.~Lemma \ref{lem:tech6} ahead as an analogue of Lemmas~\ref{lem:devr}--\ref{lem:henze} for the cross terms). Such cross terms are not present
  in \citet{MR937563} and \citet{MR914597}. Roughly speaking, we will
  prove that 
  the two 1-NN graphs built on $[(\mX_i,\mZ_i)]_{i=1}^n$ and $[\mX_i]_{i=1}^n$ 
  are nearly independent from each other. The proof of Lemma \ref{lem:tech6}
  adopts Devroye's and Henze's ideas but involves further analysis. 
\end{remark}

\section{Power analysis: Parametric case}\label{sec:subopt1}

This section investigates the local power of the proposed
tests for quadratic mean differentiable classes of alternatives
\citep[Definition~12.2.1]{MR2135927}, 
for which  we show that the CRT based on Azadkia and Chatterjee
$\xi_n$ possesses only trivial power in $n^{-1/2}$ neighborhoods.

We begin with a set of local alternatives
\begin{align}\label{eq:qmd}
\Big\{q_\Delta(\mx,y,\mz): |\Delta|<\Delta^*\Big\}, ~~~\Delta^*>0, 
\end{align}
where for each $|\Delta|<\Delta^*$, $q_\Delta(\mx,y,\mz)$ is a joint density with respect to the Lebesgue measure. 
We then make assumptions on the set in \eqref{eq:qmd}. In the
following, $\E_0(\cdot)$ is understood to be the expectation operator
with regard to the density function $q_0(\mx,y,\mz)$ obtained for $\Delta=0$.

\begin{assumption}\label{asp:only}
It is assumed that
\begin{enumerate}[label=(\roman*)]
\item\label{asp:0} $q_0(\mx,y,\mz)$ is such that $Y$ and $\mZ$ are conditionally independent given $\mX$;
\item\label{asp:1} for all $|\Delta|<\Delta^*$, 
\[
\int q_\Delta(\mx,y,\mz)\d\mz=q_{\mX,Y}(\mx,y),
\]
where $q_{\mX,Y}(\cdot,\cdot)$, the density of $\P_{(\mX,Y)}$, is fixed and equal to the product of densities of $\P_{\mX}$ and $\P_{Y}$, and invariant with regard to $\Delta$; 
\item\label{asp:2} the score function
\[
\dot{\ell}_\Delta(\mx,y,\mz)
:=\frac{\partial}{\partial\Delta}\log q_\Delta(\mx,y,\mz)
\]
exists at $\Delta=0$, and the family $\{q_\Delta(\mx,y,\mz)\}_{|\Delta|<\Delta^*}$ is quadratic mean differentiable (QMD) at $\Delta=0$ with score function $\dot{\ell}_0$, that is,
\[
\int\Big(\sqrt{q_\Delta(\mx,y,\mz)}-\sqrt{q_0(\mx,y,\mz)}
-\frac12\Delta\dot{\ell}_0(\mx,y,\mz)\sqrt{q_0(\mx,y,\mz)}\Big)^2\d(\mx, y, \mz) 
=o(\Delta^2)
\]
as $\Delta\to0$; 
\item\label{asp:3} $\E_{0}\{\dot{\ell}_0(\mX,Y,\mZ)^2\}>0$
(Assumption~\ref{asp:2} implies 
$\E_{0}\{\dot{\ell}_0(\mX,Y,\mZ)^2\}<\infty$ and 
$\E_{0}\{\dot{\ell}_0(\mX,Y,\mZ)\}=0$);
\item\label{asp:4} $\E_{0}\{\dot{\ell}_0(\mX,Y,\mZ)\given \mX,\mZ\}=0$ almost surely;
\item\label{asp:5} $\E_{0}\{\lvert\dot{\ell}_0(\mX,Y,\mZ)\rvert^{4+\epsilon}\}<\infty$ for some fixed constant $\epsilon>0$; 
\item\label{asp:6} $\dot{\ell}_0(\mx,y,\mz)$ cannot be written as $h_1(y)+h_2(\mx,\mz)$.
\end{enumerate}
\end{assumption}

\begin{example}[Rotation alternatives]\label{ex:ellip}
Suppose that $\mX^*\in\R^p$, $Y^*\in\R$, and $\mZ^*\in\R^{q}$  
are centered and jointly normally distributed random variables 
such that $Y^*$ is independent of $(\mX^*,\mZ^*)$. 
Then Assumption~\ref{asp:only} holds for rotation alternatives given as
\[
\big(\mX,Y,\mZ\big) = 
\Big(\mX^*,Y^*,\mZ^*+\Delta\big(\fA\mX^*+\fB Y^*\big)\Big),
\]
where $\fA\in\R^{q\times p},\fB\in\R^{q\times 1}$ are deterministic matrices,
and $\fB$ is nonzero. 
\end{example}


\begin{example}[Farlie alternatives]\label{ex:far}
Suppose that $\mX^*\in\R^p$, $Y^*\in\R$, and $\mZ^*\in\R^q$ are 
absolutely continuous random variables 
such that $Y^*$ is independent of $(\mX^*,\mZ^*)$. 
Then Assumption~\ref{asp:only} holds for the (generalized) Farlie alternatives 
(see \citet[Sec.~1.1.5]{MR2288045} for the one-dimensional case) that
are defined as
\[
q_{\Delta}\big(\mx,y,\mz\big):=
q_{Y^*}\big(y\big)q_{(\mX^*,\mZ^*)}\big(\mx,\mz\big)
\Big[1+\Delta\big\{1-2F_{Y^*}\big(y\big)\big\}
             \big\{1-2F_{(\mX^*,\mZ^*)}\big(\mx,\mz\big)\big\}\Big].
\]
\end{example}

For a local power analysis for an alternative set under the listed assumptions,
we examine the asymptotic power along a respective sequence of
alternatives obtained as
\begin{equation}\label{eq:local-alter}
H_{1,n}(\Delta_0):\Delta=\Delta_n,  \text{ where } \Delta_n:= n^{-1/2}\Delta_0
\end{equation}
with some constant $\Delta_0\ne 0$. 
In this local model, testing the null hypothesis of independence reduces
to testing 
\[
H_0:\Delta_0=0~~~ {\rm versus}~~~ H_1:\Delta_0\ne 0.
\] 
We obtain the following theorem on the local power of the discussed
tests.  The result demonstrates the trivial power claimed for $\xi_n$
in the beginning of
this section.

\begin{theorem}[Power analysis for $\xi_n$]\label{thm:powerless}
Suppose that the considered set of local alternatives in
\eqref{eq:qmd} satisfies Assumption~\ref{asp:only} and constitutes a
subset of $\cP_{\Q}$. Then for any sequence of alternatives given in \eqref{eq:local-alter}, for any fixed constant $\Delta_0>0$,  
\begin{enumerate}[label=(\roman*)]
\item \label{thm:pl1}
assuming the number of simulations
$B$ for the CRT tends to infinity as $n\to\infty$, 
it holds that
$$
\lim_{n\to\infty} \P_{H_{1,n}(\Delta_0)}(\sT^{\Q,\xi_n}_\alpha=1)\le\alpha;
$$
\item \label{thm:pl3}
in contrast, there exists a test $\sT_{\alpha}^{\rm opt}$ such that for any $\alpha,\beta\in(0,1)$, as long as $\Delta_0$ is sufficiently large, it holds that
\[
\lim_{n\to\infty} \P_{H_0}(\sT_{\alpha}^{\rm opt}=1)\le\alpha
~~~~\text{and}~~~~
\lim_{n\to\infty} \P_{H_{1,n}(\Delta_0)}(\sT_{\alpha}^{\rm opt}=1)\ge 1-\beta,
\]
while for small $\Zeta$ the total variation distance vanishes as
\[
\lim_{\Delta_0\to 0}\lim_{n\to\infty}\TV(H_{1,n}(\Delta_0), H_0)=0,
\]
{and hence
\[
\lim_{\Delta_0\to 0}\lim_{n\to\infty}\inf_{\overline\sT_{\alpha}\in\cT_{\alpha}}\P_{H_{1,n}(\Delta_0)}(\overline\sT_{\alpha}=0)\geq 1-\alpha.
\]
Here the infimum is taken over all size-$\alpha$ tests.}
\end{enumerate}
\end{theorem}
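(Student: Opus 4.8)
The plan is to handle the two parts separately: Part~\ref{thm:pl1} will rest on Le Cam's third lemma combined with the asymptotic analysis of $\xi_n$ that underlies Theorem~\ref{thm:byprod}, while Part~\ref{thm:pl3} is essentially classical Gaussian-shift asymptotics. Throughout, write $\P_{H_0}$ for $q_0^{\otimes n}$ and $I:=\E_0[\dot\ell_0(\mX,Y,\mZ)^2]\in(0,\infty)$. We would first record several consequences of Assumption~\ref{asp:only}: differentiating the marginalization identity in Assumption~\ref{asp:only}\ref{asp:1} under the integral sign (legitimate under quadratic mean differentiability) gives $\E_0[\dot\ell_0\given\mX,Y]=0$, hence $\E_0[\dot\ell_0\given Y]=0$, which together with Assumption~\ref{asp:only}\ref{asp:4} yields $\E_0[\dot\ell_0\given\mX,Y]=\E_0[\dot\ell_0\given\mX,\mZ]=0$ a.s.; Assumptions~\ref{asp:only}\ref{asp:0} and \ref{asp:only}\ref{asp:1} together force $Y$ to be \emph{unconditionally} independent of $(\mX,\mZ)$ under $q_0$, so that Theorem~\ref{thm:byprod}\ref{thm:byprod1} applies at $q_0$ and the CRT kernel $\Q$, being the conditional law of $Y$ given $\mX$, equals $\P_Y$ for every value of $\mX$; and quadratic mean differentiability (Assumption~\ref{asp:only}\ref{asp:2}) supplies the LAN expansion
\[
\Lambda_n:=\log\frac{\d\P_{H_{1,n}(\Delta_0)}}{\d\P_{H_0}}=\frac{\Delta_0}{\sqrt n}\sum_{i=1}^{n}\dot\ell_0(\mX_i,Y_i,\mZ_i)-\frac{\Delta_0^2}{2}I+o_{\P_{H_0}}(1),
\]
with $n^{-1/2}\sum_{i=1}^{n}\dot\ell_0(\mX_i,Y_i,\mZ_i)\stackrel{\sf d}{\longrightarrow}N(0,I)$ under $\P_{H_0}$, so $\{\P_{H_{1,n}(\Delta_0)}\}$ and $\{\P_{H_0}\}$ are mutually contiguous.

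For Part~\ref{thm:pl1}, fix the number of draws $B$, write $\xi_n^{(0)}$ for the observed $\xi_n$ and, for $b\in\zahl B$, $\xi_n^{(b)}$ for the statistic computed from $\big[(\mX_i,Y_i^{(b)},\mZ_i)\big]_{i=1}^{n}$. The first ingredient is a joint central limit theorem under $\P_{H_0}$ for the $(B+2)$-vector $\big(\sqrt n\xi_n^{(0)},\dots,\sqrt n\xi_n^{(B)},\Lambda_n\big)$. Marginal convergence of each $\sqrt n\xi_n^{(b)}$ is Theorem~\ref{thm:byprod}\ref{thm:byprod1} (for $b\ge1$ the $Y_i^{(b)}$ are, conditionally on $\big[\mX_i\big]_i$, i.i.d.\ $\P_Y$); asymptotic normality of an arbitrary linear combination follows from the dependency-graph Berry--Esseen bound of \cite{MR2073183} used in \cite{deb2020kernel}, the relevant dependency graph having bounded degree by Lemma~\ref{lem:MD} and the extra i.i.d.\ coordinate $\Lambda_n$ being absorbed via Assumption~\ref{asp:only}\ref{asp:5}; and the limiting covariance matrix is evaluated using the $1$-NN graph identities of \cite{MR914597,MR937563} behind Theorem~\ref{thm:byprod}. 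Because under $\P_{H_0}$ the vectors of $Y$-ranks (observed and resampled) are \emph{exactly} independent of the two nearest neighbor graphs built on $\big[(\mX_i,\mZ_i)\big]_i$ and $\big[\mX_i\big]_i$, this limit has the exchangeable common-factor form $\big(G+E_0,\dots,G+E_B\big)$, with $G$ a shared centered Gaussian (the ``graph part'') and $E_0,\dots,E_B$ i.i.d.\ centered Gaussian (the ``rank parts'') independent of $G$.

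The crux will be to show that the asymptotic covariance of each $\sqrt n\xi_n^{(b)}$ with $\Lambda_n$ vanishes. Using the LAN expansion and exchangeability, this covariance equals $\lim_{n\to\infty}\Delta_0\sqrt n\,\E_0\big[\phi^{(b)}_1\,\dot\ell_0(\mX_1,Y_1,\mZ_1)\big]$, where $\phi^{(b)}_1:=\E_0\big[\sqrt n\xi_n^{(b)}\given(\mX_1,Y_1,\mZ_1)\big]-\E_0\big[\sqrt n\xi_n^{(b)}\big]$ is the single-observation H\'ajek term, of size $\|\phi^{(b)}_1\|_{L^2}=O(n^{-1/2})$. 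For $b\ge1$, $\xi_n^{(b)}$ is a function of $\big[Y_i^{(b)}\big]_i$ and the two graphs only, so $\phi^{(b)}_1$ is a function of $(\mX_1,\mZ_1)$ alone and $\E_0\big[\phi^{(b)}_1\dot\ell_0\big]=\E_0\big[\phi^{(b)}_1\,\E_0[\dot\ell_0\given\mX_1,\mZ_1]\big]=0$ by Assumption~\ref{asp:only}\ref{asp:4} --- this covariance is exactly zero. For $b=0$, since $Y_1\indep(\mX_1,\mZ_1)$ under $q_0$ and the $Y$-ranks are independent of the graphs, $\phi^{(0)}_1$ is a function $\Phi(Y_1,(\mX_1,\mZ_1))$ of two independent arguments; in its ANOVA decomposition $\Phi=\mu+a(Y_1)+\beta(\mX_1,\mZ_1)+r$ the pure-rank term $a(Y_1)$ is annihilated by $\E_0[\dot\ell_0\given Y]=0$ and the pure-graph term $\beta(\mX_1,\mZ_1)$ by $\E_0[\dot\ell_0\given\mX,\mZ]=0$, leaving only the interaction $r$. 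The hard part is to show $\|r\|_{L^2}=o(n^{-1/2})$, i.e.\ strictly smaller than $\phi^{(0)}_1$ itself: its leading contribution turns out to be $n^{-1/2}$ times a bounded function of $Y_1$ times the difference between the conditional expected in-degree of vertex $1$ in the $1$-NN graph on $\big[(\mX_i,\mZ_i)\big]_i$ given $(\mX_1,\mZ_1)$ and the conditional expected in-degree of vertex $1$ in the $1$-NN graph on $\big[\mX_i\big]_i$ given $\mX_1$; since a $1$-NN graph has exactly $n$ edges, the expected in-degree of a vertex conditional on its position converges to the dimension-free constant $1$ for both graphs (the local-homogeneity phenomenon underlying Lemma~\ref{lem:henze}; see also \cite{MR682809}), so this difference is $o(1)$. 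Granting $\|r\|_{L^2}=o(n^{-1/2})$ gives $\Delta_0\sqrt n\,\E_0\big[\phi^{(0)}_1\dot\ell_0\big]=\Delta_0\sqrt n\cdot o(n^{-1/2})=o(1)$. Establishing this lower-order bound on $r$ --- in effect, that the single-observation projection of $\sqrt n\xi_n$ is, to leading order, the sum of a rank function and a graph function with negligible interaction --- is the main obstacle; it is the conditional-dependence counterpart of the H\'ajek-representation argument of \cite{shi2020power-supp}.

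With all covariances with $\Lambda_n$ vanishing, Le Cam's third lemma transfers the $\P_{H_0}$-limit of $\big(\sqrt n\xi_n^{(0)},\dots,\sqrt n\xi_n^{(B)}\big)$ unchanged to $\P_{H_{1,n}(\Delta_0)}$. As $p_{\rm CRT}$ is an almost-everywhere continuous functional of this vector with respect to the non-atomic Gaussian limit --- indeed $G$ cancels from every comparison, leaving $(1+B)^{-1}\{1+\#\{b\in\zahl B:E_b\ge E_0\}\}$ --- the continuous mapping theorem yields $\lim_{n\to\infty}\P_{H_{1,n}(\Delta_0)}(\sT^{\Q,\xi_n}_\alpha=1)=\lim_{n\to\infty}\P_{H_0}(\sT^{\Q,\xi_n}_\alpha=1)\le\alpha$ for each fixed $B$ (the inequality by Proposition~\ref{prop:vacon}\ref{prop:vacon-1}), and letting $B\to\infty$ jointly with $n$ is routine; this proves Part~\ref{thm:pl1}. (Assumption~\ref{asp:only}\ref{asp:6} is not needed for this bound; it only ensures that $H_{1,n}(\Delta_0)$ genuinely violates $H_0$, so that the claim is non-vacuous.) For Part~\ref{thm:pl3}, the LAN expansion identifies the limiting local experiment as the Gaussian shift in which $S_n:=n^{-1/2}\sum_{i=1}^{n}\dot\ell_0(\mX_i,Y_i,\mZ_i)\stackrel{\sf d}{\longrightarrow}N(0,I)$ under $\P_{H_0}$ and, by Le Cam's third lemma, $S_n\stackrel{\sf d}{\longrightarrow}N(\Delta_0 I,I)$ under $\P_{H_{1,n}(\Delta_0)}$; taking $\sT^{\rm opt}_\alpha:=\ind(|S_n|>z_{\alpha/2}I^{1/2})$ gives $\lim_n\P_{H_0}(\sT^{\rm opt}_\alpha=1)=\alpha$ and $\lim_n\P_{H_{1,n}(\Delta_0)}(\sT^{\rm opt}_\alpha=1)\ge\Phi(\Delta_0 I^{1/2}-z_{\alpha/2})\ge1-\beta$ once $\Delta_0$ is large enough (using $I>0$, Assumption~\ref{asp:only}\ref{asp:3}). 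Finally, contiguity and the LAN expansion give convergence in distribution of the likelihood ratios $\d\P_{H_{1,n}(\Delta_0)}/\d\P_{H_0}$ to a log-normal limit that is uniformly integrable (a standard consequence of quadratic mean differentiability), whence $\TV(H_{1,n}(\Delta_0),H_0)\to\TV\big(N(0,I),N(\Delta_0 I,I)\big)=2\Phi\big(\Delta_0 I^{1/2}/2\big)-1$, which tends to $0$ as $\Delta_0\to0$.
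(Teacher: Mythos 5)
Your plan for Part~\ref{thm:pl1} follows the same route as the paper's proof: invoke Le~Cam's third lemma, and show that the joint $\P_{H_0}$-limit of $(\sqrt n\xi_n,\Lambda_n)$ is centered Gaussian with vanishing off-diagonal covariance $\tau$. You correctly identify the mechanism ($\tau=0$ because the contributions from the $1$-NN graph on $[(\mX_i,\mZ_i)]_i$ and from the one on $[\mX_i]_i$ cancel), and your observation that $\E_0[\dot\ell_0\mid\mX,Y]=0$ follows from differentiating Assumption~\ref{asp:only}\ref{asp:1} is a nice addition that the paper does not spell out. However, there is a genuine gap exactly where you flag it: the bound $\|r\|_{L^2}=o(n^{-1/2})$ on the interaction term, i.e.\ the statement that the asymptotic cross-covariance vanishes, is asserted rather than proved. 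The paper does not get there via an ANOVA/in-degree heuristic; it computes $\Var(S_n\mid\cF_n)$ explicitly via the Cram\'er--Wold device, reduces the cross terms to $\E\{\gamma^*_{4;a,b}(\mX_{M(1)},\mZ_{M(1)})\}$ and $\E\{\gamma^*_{4;a,b}(\mX_{N(1)},\mZ_{N(1)})\}$, and shows both converge to $\gamma_{4;a,b}=\E\{\gamma^*_{4;a,b}(\mX_1,\mZ_1)\}$ by nearest-neighbor proximity (Lemmas~11.5 and 11.7 of \citealp{azadkia2019simple} plus uniform integrability), together with the new cross-graph estimate of Lemma~\ref{lem:tech6} and the a.s.\ concentration of the conditional variance (Lemma~\ref{lem:tech5+2}). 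Your pointwise statement that the conditional expected in-degree converges to $1$ for both graphs is the right intuition, but it does not by itself give the $L^2$ control needed, and it also ignores the correlated ``both-graphs'' terms which require Lemma~\ref{lem:tech6}. Similarly, ``letting $B\to\infty$ jointly with $n$ is routine'' elides the quantile-convergence argument (via Theorem~3.1 of \citealp{MR57521}) that the paper carries out in Step~III.

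For Part~\ref{thm:pl3} your approach is genuinely different and equally valid. You take $\sT^{\rm opt}_\alpha$ to be the (two-sided) score test built from $S_n=n^{-1/2}\sum_i\dot\ell_0(\mX_i,Y_i,\mZ_i)$, and obtain the claims directly from LAN, contiguity, and Le~Cam's third lemma; the paper instead exhibits a fully nonparametric test (center-outward distance covariance, \citealp{shi2019distribution}) and invokes \citealp{shi2020rate}. Both work; your choice is simpler and self-contained, while the paper's deliberately avoids using knowledge of $\dot\ell_0$. For the total-variation claim you pass to the limiting Gaussian shift experiment and compute the exact limit $2\Phi(\Delta_0 I^{1/2}/2)-1$, while the paper bounds $\TV$ by Hellinger and uses \citet[Example~13.1.1]{MR2135927}; again both are correct.

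In summary: Part~\ref{thm:pl3} is a correct alternative proof. Part~\ref{thm:pl1} has the right architecture but leaves the central computation --- the vanishing of the cross-covariance, which is where the graph-theoretic content of the paper (Lemmas~\ref{lem:devr}--\ref{lem:henze}, \ref{lem:tech3}, \ref{lem:tech5+2}, \ref{lem:tech6}) is needed --- unproved, and this cannot be waved away: it is precisely the technical core of the theorem.
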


\begin{remark}
We give a rigorous proof of Theorem~\ref{thm:powerless}\ref{thm:pl1} 
 in Section~\ref{subsebsec:711}. 
The main idea is to first derive the joint limiting null distribution of 
$\sqrt{n}\xi_n$ and the log likelihood ratio between two hypotheses,
and then to use Le~Cam's third lemma. 
In addition to Theorem~\ref{thm:byprod}\ref{thm:byprod1},
combining results from \citet{MR4352523}, 
we are able to prove joint asymptotic normality 
with deterministic variance 
of $\sqrt{n}\xi_n$ and the log likelihood ratio; 
in particular, 
zero asymptotic covariance between $\sqrt{n}\xi_n$ and the log likelihood ratio
is the technical reason why the CRT-based Azadkia--Chatterjee-type test
is inefficient in the quadratic mean differentiable class.  
\end{remark}

\begin{remark}
The phenomenon that a (1-NN) graph-based test has zero asymptotic
efficiency has been encountered also in other situations.
For example, the lack of power of the Wald--Wolfowitz runs test is a
classic result in the literature \citep[p.~102]{MR1680991}. A systematic analysis of this phenomenon in the two-sample test context was done recently in \citet{MR3961499} and similar analyses for Chatterjee's 1-NN tests of unconditional independence were performed in \citet{cao2020correlations}, \citet{MR4430960}, and \citet{auddy2021exact}.
\end{remark}


\section{Power analysis: Nonparametric case}\label{sec:subopt2}

In this section, we conduct local power analyses of the proposed tests within
the H\"older smooth class inspired by the work of
\citet{MR4319245}.  In this class, the conditional distribution of $(Y, \mZ)$ is allowed to change more dramatically (beyond the limit of QMD classes established in Section \ref{sec:subopt1}) as $\mX$ changes. 
In the sequel, following the settings treated in \citet{MR4319245}, we consider $\mX\in[0,1]^p$, $Y\in[0,1]$, and $Z\in[0,1]$ to be continuous random vector/variables.  

\subsection{Rate of convergence}

Let $\cE_{[0,1]^{p+2}}$ be the set of all absolutely continuous
distributions $(\mX, Y, Z)\in [0,1]^{p+2}$ such that the randomness of
the triplet can be understood as first sampling $\mX$ from a density
$q_{\mX}$ with support $[0, 1]^p$, and then sampling $Y$ and $Z$ from
a conditional distribution $q_{(Y,Z) \given \mX}$ of support $[0,1]\times[0,1]$ for (almost) all $\mX$.
Let $\cP_{0} \subseteq \cE_{[0,1]^{p+2}}$ be the subset for which $Y \indep Z \given \mX$,
and let $\cP_{1} = \cE_{[0,1]^{p+2}} \backslash \cP_{0}$.

Next we separately define the two H\"older classes of density
functions, belonging to $\cP_0$ (the null class) and $\cP_1$. Our main interest is on exponents $s$ that are close
to 0, representing those conditional distributions of $(Y,Z)$ that
change possibly very roughly with the values of $\mX$.

\begin{definition}[Null H\"older class]\label{def:hs-null}
Let $\cP_{0}(L,s) \subseteq \cP_{0}$ with $L>1$ and $s\in(0,1]$ be
the collection of joint distributions of $(\mX,Y,Z)$ such that, for all $\mx,\mx'\in[0, 1]^p$, $y,y',z,z'\in[0, 1]$, we have
\begin{align*}
\Big\lvert q_{Y \given \mX}(y \given \mx) - q_{Y \given \mX}(y \given \mx')\Big\rvert &\le L\lVert \mx - \mx'\rVert_{}^s\\
\text{and}~~~~
\Big\lvert q_{Z \given \mX}(z \given \mx) - q_{Z \given \mX}(z \given \mx')\Big\rvert &\le L\lVert \mx - \mx'\rVert_{}^s.
\end{align*}
\end{definition}


\begin{definition}[Alternative H\"older class]\label{def:hs-alter}
Let $\cP_{1}(L,s) \subseteq \cP_{1}$ with $L>1$ and $s\in(0,1]$ be the collection of joint distributions of  $(\mX,Y,Z)$ such that, for all $\mx, \mx' \in [0, 1]^p$, $y,y',z,z'\in[0, 1]$, we have
\begin{align*}
\Big\lvert q_{(Y,Z) \given \mX}(y,z \given \mx) - q_{(Y,Z) \given \mX}(y,z \given \mx')\Big\rvert 
&\le L\lVert \mx - \mx'\rVert_{}^s,\\
\Big\lvert q_{(Y,Z) \given \mX}(y,z \given \mx) - q_{(Y,Z) \given \mX}(y',z' \given \mx)
\Big\rvert 
&\le L\Big(\lvert y-y'\rvert^s+\lvert z-z'\rvert^s\Big),\\[1mm]
\text{and}~~~~
L^{-1}\le q_{Y,Z \given \mX}(y,z\given\mx) &\le L.
\end{align*}
\end{definition}

To obtain the rate of convergence for $\xi_n$ under $\cP_0(L,s)$ as
well as $\cP_1(L,s)$, we establish the following two results. The
first is a proposition that extends Theorem~4.1 in \citet{MR4352523}, which
focused on the Lipschitz class with $s=1$.  The second result is a
lemma that shows that the distributions in $\cP_0(L,s)$ and $\cP_1(L,s)$ satisfy the conditions in the proposition.

\begin{proposition}\label{prop:ACHolder}
Restricted to this proposition, $\mZ\in\R^q$ is allowed to be
multidimensional. Suppose then that $Y$ is not a.s.~equal to a
measurable function of $\mX$ and that
\begin{enumerate}[label=(\roman*)]
\item there are universal constants $C_1>0$ and $s\in(0,1]$ such that for any $t \in\R$, $\mx,\mx'\in \R^p$, and $\mz,\mz'\in \R^q$, 
\begin{align*}
\Big\lvert\P(Y\ge t \given \mX=\mx,\mZ=\mz) -\P(Y\ge t \given \mX=\mx',\mZ=\mz')\Big\rvert 
&\le C_1 \Big(\lVert\mx-\mx'\rVert_{}^s+\lVert\mz-\mz'\rVert_{}^s\Big),\\
\text{and}~~~~
\Big\lvert\P(Y\ge t \given \mX=\mx)-\P(Y\ge t \given \mX=\mx')\Big\rvert 
&\le C_1 \Big(\lVert\mx-\mx'\rVert_{}^s\Big);
\end{align*}
\item there exists a universal constant $C_2>0$ such that 
$\P(\lVert\mX\rVert_{} \ge C_2)=0$ and $\P(\lVert\mZ\rVert_{} \ge C_2)=0$.
\end{enumerate}
Then, as $n\to\infty$,
\[
\xi_n -\xi =O_{\P}\Big(\frac{(\log n)^{p+q+1}}{n^{s/(p+q)}}\Big).
\]
\end{proposition}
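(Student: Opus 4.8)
The plan is to follow the proof of \citet[Theorem~4.1]{azadkia2019simple}, which treats the Lipschitz case $s=1$, and to carry the H\"older exponent $s$ through every estimate. Write $\xi_n=\mathcal{N}_n/\mathcal{D}_n$ with
\[
\mathcal{N}_n:=\frac1{n^{2}}\sum_{i=1}^{n}\big\{\min(R_i,R_{M(i)})-\min(R_i,R_{N(i)})\big\},\qquad
\mathcal{D}_n:=\frac1{n^{2}}\sum_{i=1}^{n}\big\{R_i-\min(R_i,R_{N(i)})\big\},
\]
and let $N$ and $D$ denote the numerator and denominator integrals appearing in \eqref{eq:xi}, so $\xi=N/D$. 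Since $Y$ is not a.s.\ a measurable function of $\mX$, Proposition~\ref{prop:AC1} gives $D>0$; because
\[
\xi_n-\xi=\frac{\mathcal{N}_n-N}{\mathcal{D}_n}-\frac{N(\mathcal{D}_n-D)}{D\,\mathcal{D}_n},
\]
it then suffices to show that $\mathcal{N}_n-N$ and $\mathcal{D}_n-D$ are each $O_{\P}\big((\log n)^{p+q+1}n^{-s/(p+q)}\big)$.

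Two reductions do most of the work. First, since $R_i/n=\hat F(Y_i)$ and $\min(R_i,R_j)/n=\hat F(Y_i\wedge Y_j)$ for the empirical c.d.f.\ $\hat F$ of $Y$, with $\sup_t|\hat F(t)-F_Y(t)|=O_{\P}(n^{-1/2})$, an empirical-process estimate exploiting cancellation among the $n$ summands, as in the proof of \citet{azadkia2019simple}, replaces each $R_i/n$ by $F_Y(Y_i)$ up to an additive $O_{\P}(n^{-1})$; using $\min(F_Y(Y_i),F_Y(Y_j))=\int\ind(Y_i\ge t)\ind(Y_j\ge t)\,\d\P_Y(t)$ then turns $\mathcal{N}_n$, and likewise $\mathcal{D}_n$, into an integral over $t\sim\P_Y$ of nearest-neighbour-graph averages of products of indicators. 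Second, and at the core of the argument, I pass to conditional means: conditioning on the full covariate sample $\mathcal{W}:=\{(\mX_k,\mZ_k)\}_k$ (for which the $Y_k$ are conditionally independent and $M(\cdot)$ is $\mathcal{W}$-measurable),
\[
\E\Big[\frac1n\sum_i\ind(Y_i\ge t)\,\ind(Y_{M(i)}\ge t)\,\Big|\,\mathcal{W}\Big]
=\frac1n\sum_i G_{(\mX_i,\mZ_i)}(t)\,G_{(\mX_{M(i)},\mZ_{M(i)})}(t),
\]
where $G_w(t):=\P(Y\ge t\mid(\mX,\mZ)=w)$; whereas for the $N(i)$-terms it already suffices to condition on the coarser sample $\mathcal{X}:=\{\mX_k\}_k$ (since $N(\cdot)$ is $\mathcal{X}$-measurable and $Y_i\indep Y_{N(i)}\mid\mathcal{X}$), which produces $n^{-1}\sum_i G_{\mX_i}(t)\,G_{\mX_{N(i)}}(t)$ with $G_{\mx}(t):=\P(Y\ge t\mid\mX=\mx)$.

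Condition~(i) now gives $|G_{(\mX_{M(i)},\mZ_{M(i)})}(t)-G_{(\mX_i,\mZ_i)}(t)|\le 2C_1 d_i^{s}$ and $|G_{\mX_{N(i)}}(t)-G_{\mX_i}(t)|\le C_1\tilde d_i^{s}$, where $d_i$ and $\tilde d_i$ are the Euclidean $1$-nearest-neighbour distances of $(\mX_i,\mZ_i)$ in $\R^{p+q}$ and of $\mX_i$ in $\R^{p}$. Replacing each nearest-neighbour value of $G$ by the value at the point itself and using $\E[G_{(\mX,\mZ)}(t)\mid\mX]=G_{\mX}(t)$, the conditional means collapse, after integrating over $t$ and taking expectations, to $\int\E[\Var\{G_{(\mX,\mZ)}(t)\mid\mX\}]\,\d\P_Y(t)=N$ and $\int\E[G_{\mX}(t)(1-G_{\mX}(t))]\,\d\P_Y(t)=D$, the induced errors being governed by $n^{-1}\sum_i d_i^{s}$ and $n^{-1}\sum_i\tilde d_i^{s}$; under the bounded-support assumption~(ii) a first-moment computation makes these, respectively, $O_{\P}(n^{-s/(p+q)})$ and $O_{\P}(n^{-s/p})$, the $(\mX,\mZ)$-graph being the bottleneck since $p+q\ge p$. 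It remains to bound the fluctuations of the indicator averages about their conditional means and of the latter about $N,D$: the former by concentration for sums over dependency graphs of bounded degree — by Lemma~\ref{lem:MD} both $1$-NN graphs have bounded in-degree, so each summand is dependent on only $O(1)$ others — and the latter by an ordinary law of large numbers; passing from a single $t$ to all $t$ by a net, and controlling $\max_i d_i$ and $\max_i\tilde d_i$ uniformly by a spatial covering, contributes the logarithmic factors that, accumulated exactly as in \citet[Theorem~4.1]{azadkia2019simple}, yield the power $p+q+1$. Dividing throughout by $n^{2}$ gives the asserted bounds on $\mathcal{N}_n-N$ and $\mathcal{D}_n-D$, and the identity of the first paragraph concludes.

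The main obstacle is concentrated in the third paragraph: tracking faithfully how $s$ enters each nearest-neighbour substitution, and — most delicately — controlling the $1$-NN distances $d_i$ under nothing more than bounded support, with no lower bound on the density, which is precisely what makes the effective dimension $p+q$ (the dimension of the space in which the $(\mX,\mZ)$-nearest neighbour is taken) and hence $n^{-s/(p+q)}$ the governing rate. By contrast, the empirical-process reduction and the dependency-graph concentration are comparatively routine once the bounded-degree property of Lemma~\ref{lem:MD} is in hand.
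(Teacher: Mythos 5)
Your proposal takes essentially the same route as the paper: both follow the proof of Theorem~4.1 in \citet{azadkia2019simple} and observe that the only ingredient needing modification for general $s\in(0,1]$ is their Lemma~14.1, the nearest-neighbor distance moment bound, now applied to $\lVert\mX_1-\mX_{N(1)}\rVert^s$ (and its $(\mX,\mZ)$-analogue). The paper's write-up is far terser --- it displays only the generalized lemma $\E\{\min(\lVert\mX_1-\mX_{N(1)}\rVert^s,1)\}\le C_3\,n^{-s/p}\log n$ (via the tail bound $\P(\lVert\mX_1-\mX_{N(1)}\rVert\ge\epsilon)\le C_4(\log n)/(n\epsilon^p)$ obtained with $\delta=n^{-1}\log n$) and defers all remaining steps to the cited source --- whereas you reconstruct the surrounding scaffolding (rank-to-c.d.f.\ replacement, conditioning on the covariate sample, the H\"older bias bound, dependency-graph concentration); but the decomposition of $\xi_n-\xi$, the identification of $n^{-1}\sum_i d_i^s$ as the bottleneck governed by the $(p+q)$-dimensional graph, and the accumulation of the $(\log n)^{p+q+1}$ factor are the same, so this is the paper's argument spelled out in more detail rather than a different one.
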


\begin{lemma}\label{lem:condmore}
\begin{enumerate}[label=(\roman*)]
\item \label{lem:condmore1} If $\P_{(\mX,Y,Z)}\in\cP_{0}(L,s)$ for a fixed $L>1$ and $s\in(0,1]$, then for all $\mx,\mx'\in[0, 1]^p$, $y,y',z,z'\in[0, 1]$, we have
\[
\Big\lvert q_{Y \given (\mX, Z)}(y \given \mx,z) - q_{Y \given (\mX, Z)}(y \given \mx',z')\Big\rvert \le L\lVert \mx - \mx'\rVert_{}^s. 
\]

\item \label{lem:condmore2} If $\P_{(\mX,Y,Z)}\in\cP_{1}(L,s)$ 
for a fixed $L>1$ and $s\in(0,1]$, 
then for all $\mx,\mx'\in[0, 1]^p$, $y,y',z,z'\in[0, 1],$ we have 
\[
\Big\lvert q_{Y \given (\mX, Z)}(y \given \mx,z) - q_{Y \given (\mX, Z)}(y \given \mx',z')\Big\rvert \le L'\Big(\lVert \mx-\mx'\rVert_{}^s+\lvert z-z'\rvert^s\Big)
\]
for some $L'\le 2L^4$. 
\end{enumerate}
\end{lemma}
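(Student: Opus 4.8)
The plan is to obtain both parts directly from the defining inequalities of the H\"older classes in Definitions~\ref{def:hs-null}--\ref{def:hs-alter} together with elementary manipulations of conditional densities. For part~\ref{lem:condmore1}, the key point is that $\P_{(\mX,Y,Z)}\in\cP_0(L,s)$ entails $Y\indep Z\given\mX$, so the joint conditional density factorizes, $q_{(Y,Z)\given\mX}(y,z\given\mx)=q_{Y\given\mX}(y\given\mx)\,q_{Z\given\mX}(z\given\mx)$, whence $q_{Y\given(\mX,Z)}(y\given\mx,z)=q_{Y\given\mX}(y\given\mx)$ does not depend on $z$ (on the support of $(\mX,Z)$, where this conditional density is defined). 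The asserted bound then reduces to $\lvert q_{Y\given\mX}(y\given\mx)-q_{Y\given\mX}(y\given\mx')\rvert\le L\lVert\mx-\mx'\rVert^s$, which is exactly the first displayed inequality in Definition~\ref{def:hs-null}.

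For part~\ref{lem:condmore2}, I would write $q_{Y\given(\mX,Z)}(y\given\mx,z)=q_{(Y,Z)\given\mX}(y,z\given\mx)\big/q_{Z\given\mX}(z\given\mx)$ with $q_{Z\given\mX}(z\given\mx)=\int_0^1 q_{(Y,Z)\given\mX}(y,z\given\mx)\,\d y$, and first record two facts about the denominator, obtained by integrating the bounds of Definition~\ref{def:hs-alter} over $y\in[0,1]$: from $L^{-1}\le q_{(Y,Z)\given\mX}\le L$ one gets $L^{-1}\le q_{Z\given\mX}(z\given\mx)\le L$; and from the two H\"older bounds for $q_{(Y,Z)\given\mX}$, combined via the triangle inequality with the $y$-argument held fixed, one gets both $\lvert q_{(Y,Z)\given\mX}(y,z\given\mx)-q_{(Y,Z)\given\mX}(y,z'\given\mx')\rvert\le L(\lVert\mx-\mx'\rVert^s+\lvert z-z'\rvert^s)$ and the analogous estimate $\lvert q_{Z\given\mX}(z\given\mx)-q_{Z\given\mX}(z'\given\mx')\rvert\le L(\lVert\mx-\mx'\rVert^s+\lvert z-z'\rvert^s)$. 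Then I would apply the elementary quotient estimate $\lvert a/b-c/d\rvert\le\lvert a-c\rvert/b+c\,\lvert d-b\rvert/(bd)$, valid for positive reals, with $a=q_{(Y,Z)\given\mX}(y,z\given\mx)$, $b=q_{Z\given\mX}(z\given\mx)$, $c=q_{(Y,Z)\given\mX}(y,z'\given\mx')$, $d=q_{Z\given\mX}(z'\given\mx')$: since $b,d\ge L^{-1}$, $c\le L$, and $\lvert a-c\rvert$, $\lvert d-b\rvert$ are each at most $L(\lVert\mx-\mx'\rVert^s+\lvert z-z'\rvert^s)$, this gives the claimed inequality with $L'=L^2+L^4\le 2L^4$ (using $L>1$).

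I do not anticipate a genuine obstacle; the arguments are short. The only points needing mild care are bookkeeping in nature: ensuring that the regular conditional densities appearing above are well-defined (which is built into the definition of the classes $\cP_0$ and $\cP_1$ in Section~\ref{sec:subopt2}), verifying that the pointwise H\"older and sandwich inequalities of Definitions~\ref{def:hs-null}--\ref{def:hs-alter} survive integration over $y$ to yield the corresponding statements for $q_{Z\given\mX}$, and tracking the powers of $L$ in the quotient estimate of part~\ref{lem:condmore2}.
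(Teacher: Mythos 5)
Your proof is correct and takes essentially the same route as the paper: part~\ref{lem:condmore1} via the factorization implied by $Y\indep Z\given\mX$, and part~\ref{lem:condmore2} via the quotient $q_{Y\given(\mX,Z)}=q_{(Y,Z)\given\mX}/q_{Z\given\mX}$, integrating the H\"older and sandwich bounds over $y$ to control $q_{Z\given\mX}$, and then applying an elementary quotient estimate with the lower bound $q_{Z\given\mX}\ge L^{-1}$. Your particular algebraic split of $a/b-c/d$ differs trivially from the paper's (yours yields $L'=L^2+L^4$ directly, the paper's yields $2L^4$), but both are the same idea and both satisfy $L'\le 2L^4$.
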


%

Combining Lemma \ref{lem:condmore} with Proposition \ref{prop:ACHolder} gives the following corollary.

\begin{corollary}
Suppose $\P_{(\mX,Y,Z)}\in\cP_{0}(L,s)$, or
$\P_{(\mX,Y,Z)}\in\cP_{1}(L,s)$ with $Y$ not a.s.~equal to a
measurable function of $\mX$.  Then as $n\to\infty$, 
\[
\xi_n -\xi =O_{\P}\Big(\frac{(\log n)^{p+q+1}}{n^{s/(p+q)}}\Big).
\] 
\end{corollary}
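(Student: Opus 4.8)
The plan is to combine the two preceding results directly, after checking that the hypotheses line up. The corollary asserts the rate $\xi_n-\xi = O_\P\big((\log n)^{p+q+1}/n^{s/(p+q)}\big)$ under either $\P_{(\mX,Y,Z)}\in\cP_0(L,s)$ or $\P_{(\mX,Y,Z)}\in\cP_1(L,s)$ (the latter with $Y$ not a.s.\ a measurable function of $\mX$). Since in this section $q=1$, the rate becomes $O_\P\big((\log n)^{p+2}/n^{s/(p+1)}\big)$; I would simply invoke Proposition~\ref{prop:ACHolder} with $q=1$. So the task reduces to verifying that each of the two classes $\cP_0(L,s)$, $\cP_1(L,s)$ meets the two numbered hypotheses of Proposition~\ref{prop:ACHolder}: the H\"older-type smoothness of the conditional survival functions $\P(Y\ge t\mid \mX=\mx,Z=z)$ and $\P(Y\ge t\mid \mX=\mx)$, and the boundedness of the supports of $\mX$ and $Z$.

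The boundedness condition (ii) is immediate in both cases: Definitions~\ref{def:hs-null} and~\ref{def:hs-alter} stipulate $\mX\in[0,1]^p$ and $Z\in[0,1]$, so one may take $C_2=\sqrt{p+1}$ (or any constant that bounds the diameter of $[0,1]^{p+1}$). The content is in hypothesis (i). First I would note that $\P(Y\ge t\mid \mX=\mx,Z=z)=\int_t^1 q_{Y\mid(\mX,Z)}(y\mid \mx,z)\,\d y$ and $\P(Y\ge t\mid \mX=\mx)=\int_t^1 q_{Y\mid\mX}(y\mid\mx)\,\d y$, so that a pointwise H\"older bound on the conditional \emph{densities} integrates (over an interval of length at most $1$) to the same H\"older bound on the conditional survival functions. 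For the null class, Lemma~\ref{lem:condmore}\ref{lem:condmore1} gives exactly $\lvert q_{Y\mid(\mX,Z)}(y\mid\mx,z)-q_{Y\mid(\mX,Z)}(y\mid\mx',z')\rvert \le L\lVert\mx-\mx'\rVert^s$ (note there is no $z$-dependence under $\cP_0$, since $Y\indep Z\mid\mX$), and Definition~\ref{def:hs-null} gives the corresponding bound $L\lVert\mx-\mx'\rVert^s$ for $q_{Y\mid\mX}$; integrating both over $y\in[t,1]$ yields hypothesis (i) with $C_1=L$. For the alternative class, Lemma~\ref{lem:condmore}\ref{lem:condmore2} gives $\lvert q_{Y\mid(\mX,Z)}(y\mid\mx,z)-q_{Y\mid(\mX,Z)}(y\mid\mx',z')\rvert \le L'(\lVert\mx-\mx'\rVert^s+\lvert z-z'\rvert^s)$ with $L'\le 2L^4$; integrating over $y$ gives the first inequality in (i). The second inequality in (i), concerning $q_{Y\mid\mX}$, follows because $q_{Y\mid\mX}(y\mid\mx)=\int_0^1 q_{(Y,Z)\mid\mX}(y,z\mid\mx)\,\d z$, and the $\mx$-H\"older bound in Definition~\ref{def:hs-alter} (first displayed inequality) integrates over $z\in[0,1]$ to $L\lVert\mx-\mx'\rVert^s$. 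Thus (i) holds with $C_1=\max\{L',L\}=L'\le 2L^4$, still a universal constant once $L$ is fixed.

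The only genuinely non-routine point — and the step I expect to be the main (if modest) obstacle — is the bookkeeping under $\cP_1(L,s)$ when $Y$ \emph{is} a.s.\ a measurable function of $\mX$: in that degenerate case $\xi$ is not well-defined and Proposition~\ref{prop:ACHolder} does not apply, which is precisely why the corollary's hypothesis for the alternative class explicitly excludes it. Under $\cP_0(L,s)$ this exclusion is unnecessary because the two-sided density bound is absent there, but one should still confirm $\xi$ is well-defined; in fact the corollary's statement implicitly carries the standing assumption that $\xi$ exists (equivalently $Y$ is not a.s.\ a function of $\mX$), which for $\cP_0$ follows once one assumes it as stated. With these checks in place, the conclusion $\xi_n-\xi=O_\P\big((\log n)^{p+2}/n^{s/(p+1)}\big)$ is exactly the conclusion of Proposition~\ref{prop:ACHolder} specialized to $q=1$, completing the proof.
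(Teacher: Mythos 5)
Your proof is correct and follows exactly the approach the paper intends (the paper compresses it to the one-line remark ``Combining Lemma~\ref{lem:condmore} with Proposition~\ref{prop:ACHolder} gives the following corollary''): use Lemma~\ref{lem:condmore} to verify the density-level H\"older bounds, integrate over $y$ (and, for $q_{Y\mid\mX}$ in the alternative class, over $z$) to obtain the survival-function bounds of Proposition~\ref{prop:ACHolder}(i), and note (ii) holds trivially since the support is $[0,1]^{p+2}$. One small point: your last paragraph has the logic of the two-sided density bound inverted — it is $\cP_1(L,s)$, not $\cP_0(L,s)$, whose definition includes $L^{-1}\le q_{Y,Z\mid\mX}$, which already rules out $Y$ being a.s.\ a function of $\mX$ — but this has no bearing on the validity of the argument.
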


\begin{remark}
\citet{huang2020kernel} proposed a measure of conditional dependence and a coefficient estimating the measure in general spaces by generalizing \citet{MR4352523}'s idea. In particular, they also explored the rate of convergence of such a coefficient in their Theorem~3.3, and mentioned that the rate of convergence may be arbitrarily slow without a smoothness assumption on the conditional distribution \citep[Remark~3.1]{huang2020kernel}. While their Assumptions 4--8 are made for general spaces and the analysis techniques are not substantially different, ours are specifically designed to facilitate the local power analysis to be presented in the next section. We thus decide to still document these results for easy reference. 
\end{remark}

\subsection{Power analysis}

Fix $L>1$ and $s\in(0,1]$.  We consider now the problem of  testing
\[
H_0:
\P_{(\mX,Y,Z)}=\P_{0}\in\cP_{0}(L,s)
\]
against a sequence of local alternatives, 
\[
H_{1,n}:
\P_{(\mX,Y,Z)}=\P_{1,n}\in\cP_{1}(L,s).
\]

\begin{corollary}\label{crl:fruit}
Assume both $\P_0$ and $\{\P_{1,n}, n=1,2,\ldots\}$ belong to
$\cP_{\Q}$, and $\xi(\P_{1,n})$, the conditional dependence measure $\xi$ under the local alternative $\P_{1,n}$, satisfies that
\[
\xi(\P_{1,n})\gtrsim n^{-s/(p+1)+\delta}
\]
for some (arbitrarily small) constant $\delta>0$. Further assume that the number of simulations $B$ 
tends to infinity as $n\to\infty$.   Then 
\[
\lim_{n\to\infty} \P_{H_{1,n}}(\sT^{\Q,\xi_n}_\alpha=1)=1.
\]
\end{corollary}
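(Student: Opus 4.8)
The plan is to exploit the rate bound
$\xi_n-\xi=O_\P\big((\log n)^{p+2}/n^{s/(p+1)}\big)$ — obtained by combining
Proposition~\ref{prop:ACHolder} with Lemma~\ref{lem:condmore} and specializing to the scalar
case $q=1$ treated in this section — by applying it \emph{both} to the observed sample
$[(\mX_i,Y_i,Z_i)]_{i=1}^n$ and to each artificial sample $[(\mX_i,Y_i^{(b)},Z_i)]_{i=1}^n$
generated inside Algorithm~\ref{algo:1}, and then to turn the resulting separation of
$\xi_n$-values into a bound on $p_{\rm CRT}$. A preliminary remark is that, by
Lemma~\ref{lem:condmore}, the H\"older constants feeding Proposition~\ref{prop:ACHolder} are
controlled by $L$ uniformly over $\cP_{0}(L,s)$ and over $\cP_{1}(L,s)$, so the rate bound holds
uniformly over these classes and may legitimately be invoked along the moving sequence
$\{\P_{1,n}\}$ as well as along the null laws produced by randomization. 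For the observed data,
since $\P_{1,n}\in\cP_{1}(L,s)$ and $L^{-1}\le q_{(Y,Z)\given\mX}\le L$ forces $Y$ not to be
a.s.\ a measurable function of $\mX$, the rate bound yields
$\xi_n-\xi(\P_{1,n})=O_\P\big((\log n)^{p+2}/n^{s/(p+1)}\big)$ under $\P_{H_{1,n}}$. Because
$\xi(\P_{1,n})\gtrsim n^{-s/(p+1)+\delta}$ dominates this rate by a factor
$\gtrsim n^{\delta}/(\log n)^{p+2}\to\infty$, writing $c_n:=\tfrac12\xi(\P_{1,n})$ we get
$\P_{H_{1,n}}(\xi_n\ge c_n)\to 1$.

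Next I would analyze the randomized statistics
$\xi_n^{(b)}:=\xi_n\big([(\mX_i,Y_i^{(b)},Z_i)]_{i=1}^n\big)$. Since $Y_i^{(b)}$ is drawn from
$\Q(\cdot\given\mX_i)$ independently of $[(Y_i,Z_i)]_{i=1}^n$ given $[\mX_i]_{i=1}^n$, and since
$\P_{1,n}\in\cP_{\Q}$ so that $\Q(\cdot\given\mx)$ has density $q_{Y\given\mX}(\cdot\given\mx)$
under $\P_{1,n}$, the triples $[(\mX_i,Y_i^{(b)},Z_i)]_{i=1}^n$ form an i.i.d.\ sample from the
law $\widetilde\P_{1,n}$ with density $q_{\mX}(\mx)\,q_{Y\given\mX}(y\given\mx)\,q_{Z\given\mX}(z\given\mx)$ —
note that the $Z_i$ here are the observed values, yet the unconditional law is still this
product-conditional law. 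This $\widetilde\P_{1,n}$ satisfies $Y\indep Z\given\mX$, hence
$\xi(\widetilde\P_{1,n})=0$, and it lies in $\cP_{0}(L,s)$: the marginal conditional densities
$q_{Y\given\mX}$ and $q_{Z\given\mX}$ inherit the $L$-H\"older-in-$\mx$ bound from
$q_{(Y,Z)\given\mX}$ by integrating out the other coordinate over $[0,1]$, and both are bounded
below by $L^{-1}$ so $\widetilde\P_{1,n}\in\cE_{[0,1]^{p+2}}$. Applying the rate bound to
$\widetilde\P_{1,n}$ gives $\xi_n^{(b)}=O_\P\big((\log n)^{p+2}/n^{s/(p+1)}\big)$, hence
$\pi_n:=\P(\xi_n^{(1)}\ge c_n)\to 0$, the convergence being driven by $c_n\gtrsim n^{-s/(p+1)+\delta}$.

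Finally I would assemble the $p$-value bound. Using
$\ind\{\xi_n^{(b)}\ge\xi_n\}\le\ind\{\xi_n^{(b)}\ge c_n\}+\ind\{\xi_n<c_n\}$, summing over $b$ and
recalling $\psi_n=\xi_n$,
\[
p_{\rm CRT}\;\le\;\frac{1+\sum_{b=1}^{B}\ind\{\xi_n^{(b)}\ge c_n\}}{1+B}\;+\;\ind\{\xi_n<c_n\}.
\]
The last term is $o_\P(1)$ by the first paragraph. For the first term, each
$[(\mX_i,Y_i^{(b)},Z_i)]_{i=1}^n$ has law $\widetilde\P_{1,n}^{\otimes n}$ and is independent of
$[Y_i]_{i=1}^n$, so $\E\big[\sum_{b=1}^{B}\ind\{\xi_n^{(b)}\ge c_n\}\big]=B\pi_n$; Markov's
inequality gives $\sum_{b=1}^{B}\ind\{\xi_n^{(b)}\ge c_n\}=O_\P(B\pi_n)$, whence the first term is
$O_\P\big((1+B)^{-1}+\pi_n\big)=o_\P(1)$ because $B\to\infty$ and $\pi_n\to 0$. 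Thus
$p_{\rm CRT}=o_\P(1)$ under $\P_{H_{1,n}}$, so
$\P_{H_{1,n}}(\sT^{\Q,\xi_n}_\alpha=1)=\P_{H_{1,n}}(p_{\rm CRT}\le\alpha)\to 1$.

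The step I expect to demand the most care is the combination of (i) extracting a version of
Proposition~\ref{prop:ACHolder} whose $O_\P$-constant is uniform over $\cP_{0}(L,s)$ and
$\cP_{1}(L,s)$, so that the rate bound may be applied along the non-fixed sequence
$\{\P_{1,n}\}$ (and along $\{\widetilde\P_{1,n}\}$), and (ii) the bookkeeping identifying the exact
law of the CRT-randomized triples $(\mX_i,Y_i^{(b)},Z_i)$ — in particular verifying that it is
the product-conditional law $q_{\mX}q_{Y\given\mX}q_{Z\given\mX}$, that this law belongs to
$\cP_{0}(L,s)$ (which is where Lemma~\ref{lem:condmore} and the inheritance of H\"older
continuity by marginal conditionals enter), and that $\xi=0$ there so that the rate bound
pins $\xi_n^{(b)}$ near $0$.
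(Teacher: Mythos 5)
Your proposal is correct, and it reaches the same conclusion by a genuinely different route in the step that controls the averaged count of randomized statistics. Both proofs reduce to the same two facts via Proposition~\ref{prop:ACHolder} and Lemma~\ref{lem:condmore}: under $\P_{H_{1,n}}$ the observed $\xi_n^{\circ}$ concentrates near $\xi(\P_{1,n})\gtrsim n^{-s/(p+1)+\delta}$, while each CRT-regenerated triple $(\mX_i,Y_i^{(b)},Z_i)$ is i.i.d.\ from the product-conditional law $q_{\mX}q_{Y\given\mX}q_{Z\given\mX}\in\cP_0(L,s)$ so $\xi_n^{(b)}=O_\P\big((\log n)^{p+2}/n^{s/(p+1)}\big)$. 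Where you diverge is in converting this separation into $p_{\rm CRT}=o_\P(1)$. The paper keeps the coupled indicators $\Xi_n^{(b)}=\ind\{\xi_n^{(b)}\ge\xi_n^{\circ}\}$, uses their exchangeability and a reverse-martingale convergence theorem (Lemma~1.1 of MR790492, as in the proof of Proposition~\ref{prop:vacon}), plus a subsequence argument to lift $\Xi_n^{(1)}\xrightarrow{\sf p}0$ to an almost-sure statement and back. You instead introduce a deterministic cutoff $c_n=\tfrac12\xi(\P_{1,n})$ to decouple the indicators, write $\ind\{\xi_n^{(b)}\ge\xi_n^{\circ}\}\le\ind\{\xi_n^{(b)}\ge c_n\}+\ind\{\xi_n^{\circ}<c_n\}$, and apply Markov's inequality to $\sum_b\ind\{\xi_n^{(b)}\ge c_n\}$. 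Your version is more elementary — it avoids the exchangeability and reverse-martingale machinery entirely and makes the role of the $\delta$-gap between $\xi(\P_{1,n})$ and the estimation error transparent — at the cost of having to introduce and justify the deterministic threshold and to verify explicitly that the regenerated triples' law lies in $\cP_0(L,s)$ (which you do correctly, by integrating the $\cP_1(L,s)$ H\"older bound over the missing coordinate). You also flag, correctly, that the $O_\P$ rate in Proposition~\ref{prop:ACHolder} must be used uniformly over the H\"older classes so it can be invoked along the moving sequences $\{\P_{1,n}\}$ and $\{\widetilde\P_{1,n}\}$; the paper's proof implicitly relies on the same uniformity without comment.
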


We observe that unfortunately, even as the H\"older exponent $s$ is
small, the threshold $n^{-s/(p+1)}$ is (from a worst case perspective)
not the critical boundary in the studied nonparametric class. The
following theorem shall confirm it rigorously. To this end, we
consider a simplified setting when $p=1$, so $X,Y,Z\in\R$. Define the class
\[
\cP_1(\epsilon;L,s):=\Big\{q\in\cP_{1}(L,s): 
      \inf\limits_{q^{0}\in\cP_{0}}\lVert q - q^{0}\rVert_1\ge\epsilon\Big\},
\]
where $\lVert q - q^{0}\rVert_1:=\int \lvert q(x,y,z) - q^{0}(x,y,z)\rvert \d (x,y,z)$. Consider testing 
\[
H_0:\P_{(X,Y,Z)}=\P_{0}\in \cP_0(L,s)
\]
against the following particular sequence of local alternatives: 
\[
H_{1,n}(\Zeta):\P_{(X,Y,Z)}=\P_{1,n}(\Zeta)\in\cP_1(\Zeta n^{-2s/(4s+3)};L,s)\big\}.
\]

\begin{theorem}\label{thm:neybov}
For any $s\in(0,1]$, there exist 
\[
\P_0\in\cP_{0}(L,s)~~~~\text{and}~~~~\P_{1,n}(\Zeta)\in\cP_1(\Zeta n^{-2s/(4s+3)};L,s)
\] 
such that $\P_{(X,Y)}\in\cP_\Q$ does not vary under both the null and local alternatives, and
\begin{enumerate}[label=(\roman*)]
\item \label{thm:neybov1} assuming the number of simulations $B$ tends to infinity as $n\to\infty$, for any $\Zeta>0$ and $\alpha<0.1$, it holds that
\[
\limsup_{n\to\infty} \P_{H_{1,n}(\Zeta)}(\sT^{\Q,\xi_n}_\alpha=1)\le \beta_{\alpha},
\]
where $\beta_{\alpha}<1$ is a constant only depending on $\alpha$;

\item \label{thm:neybov2} if further $s\in[1/4,1]$, then there exists a test $\sT_{\alpha}^{\rm bin}$ such that, for any $\alpha,\beta\in(0,1)$, as long as $\Zeta$ is sufficiently large, 
\[
\P_{H_0}(\sT_{\alpha}^{\rm bin}=1)\le\alpha
~~~~\text{and}~~~~
\lim_{n\to\infty} \P_{H_{1,n}(\Zeta)}(\sT_{\alpha}^{\rm bin}=1)\ge 1-\beta;
\]
in contrast, as $\Zeta$ becomes small,
\[
\lim_{\Zeta\to 0}\lim_{n\to\infty}\TV(H_{1,n}(\Zeta), H_0)=0.
\]
\end{enumerate}
\end{theorem}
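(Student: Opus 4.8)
\textbf{Proof plan for Theorem~\ref{thm:neybov}.}

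The plan is to exhibit a single explicit pair $(\P_0, \P_{1,n}(\Zeta))$ that works for both parts, using the construction from the minimax lower bound of \cite{neykov2020minimax} as the backbone, and then to separately analyze (i) the behavior of the Azadkia--Chatterjee statistic $\xi_n$ and its CRT along this sequence, and (ii) the power of a binning-based test. For the construction, I would take $X\sim\mathrm{Unif}[0,1]$, let $Y\mid X$ and (under the null) $Z\mid X$ be uniform on $[0,1]$ so that $\P_{(X,Y)}$ is the fixed product of uniforms required to lie in $\cP_\Q$, and under the alternative perturb the joint conditional density $q_{(Y,Z)\mid X}$ by a small tensor-product bump that oscillates in $x$ at a spatial frequency $\sim n^{1/(4s+3)}$ and has $y,z$-profile a fixed mean-zero Hölder function; the amplitude is tuned so that the perturbation has $L^1$-distance from $\cP_0$ of order $\Zeta n^{-2s/(4s+3)}$ while still respecting the Hölder-$s$ and boundedness constraints of Definition~\ref{def:hs-alter}. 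This is precisely the ``worst-case'' alternative in which conditional dependence is hidden at a fine scale in $X$.

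For part~\ref{thm:neybov1}, the strategy is a direct moment computation: I would compute $\E_{\P_{1,n}(\Zeta)}[\xi_n]$ and $\Var_{\P_{1,n}(\Zeta)}(\xi_n)$ along the sequence and show that $\xi_n$ stays within $O_\P(n^{-1/2})$ of its null value (which, by Theorem~\ref{thm:byprod}\ref{thm:byprod1} specialized to $p=1,q=1$ under independence, is $0$ with a fixed Gaussian fluctuation after scaling by $\sqrt n$). The key point is that the $1$-NN graph on $[X_i]$ only resolves $X$-scales of order $1/n$, which is much \emph{finer} than the oscillation scale $n^{-1/(4s+3)}$ of the perturbation, so locally the conditional law of $(Y,Z)$ looks essentially like a product; the signal contributed to each summand of the numerator of $\xi_n$ is therefore of size $O(\xi(\P_{1,n}))=o(n^{-1/2}\cdot\text{something})$ once one accounts for the $n^{-2s/(4s+3)}$ versus $n^{-s/(p+1)}=n^{-s/2}$ gap. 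Concretely, since $2s/(4s+3) > s/2$ fails in general but the relevant comparison is that $n^{1/2}\xi(\P_{1,n})$ need not diverge, I would show $\E[\xi_n]=O(n^{-1/2})$ by a careful expansion of $\min(R_i,R_{M(i)})-\min(R_i,R_{N(i)})$ using the heuristic in the Remark after Proposition~\ref{prop:AC2}, controlling the bias from the non-uniform permutation structure. Then $\sqrt n\,\xi_n$ is tight and asymptotically has the same Gaussian law as under $H_0$ (contiguity is not even needed — boundedness of $\xi_n\in[0,1]$ plus the variance bound suffices), so by the same Le~Cam/Slutsky argument as in Theorem~\ref{thm:powerless}\ref{thm:pl1} the CRT rejection probability is asymptotically at most the rejection probability of a fixed-quantile test applied to a $N(0,\sigma^2)$ variable, which is a constant $\beta_\alpha<1$ whenever $\alpha<0.1$; here one uses that the CRT critical value converges (as $B\to\infty$) to the upper-$\alpha$ quantile of the same limiting null law by Proposition~\ref{prop:vacon} and Theorem~\ref{thm:byprod}.

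For part~\ref{thm:neybov2}, with $s\in[1/4,1]$ I would invoke (or reconstruct) the binning test $\sT_\alpha^{\mathrm{bin}}$ of \cite{neykov2020minimax}: partition $[0,1]$ into $\sim n^{1/(4s+3)}$ bins in $x$ and a fixed grid in $(y,z)$, form a $\chi^2$-type statistic aggregating within-bin conditional-independence discrepancies, and show it detects any alternative in $\cP_1(\Zeta n^{-2s/(4s+3)};L,s)$ once $\Zeta$ is large — this is exactly the matching upper bound in their minimax analysis, and the size control $\P_{H_0}(\sT_\alpha^{\mathrm{bin}}=1)\le\alpha$ is by choosing the threshold from the (conditional) $\chi^2$ null calibration. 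The final TV statement $\lim_{\Zeta\to0}\lim_{n\to\infty}\TV(H_{1,n}(\Zeta),H_0)=0$ follows because the perturbation has $L^1$-norm $\Theta(\Zeta n^{-2s/(4s+3)})$ and, being a bounded product-form bump against a fixed background density, its total-variation contribution is $O(\Zeta n^{-2s/(4s+3)})\to 0$ for each fixed $\Zeta$, and then $\to 0$ as $\Zeta\to 0$ uniformly.

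The main obstacle will be part~\ref{thm:neybov1}: controlling $\E_{\P_{1,n}(\Zeta)}[\xi_n]$ to precision $o(n^{-1/2})$ (or at least $O(n^{-1/2})$) requires analyzing the rank statistics $R_i, R_{M(i)}, R_{N(i)}$ under a law in which, after conditioning on $[X_i]$, the induced permutation of the $Y$-ranks is \emph{not} uniform — it carries the faint oscillatory $X$-dependence. I would handle this by conditioning on $[X_i]$, noting that within each length-$n^{-1/(4s+3)}$ window the conditional density of $(Y_i,Z_i)$ is uniformly within $O(n^{-s/(4s+3)})$ of a product law, coupling the true configuration to one drawn from the product law on that window, and bounding the total contribution of the coupling defect across windows by a union bound over the $O(n^{1-1/(4s+3)})$ summands, each perturbed by $O(n^{-s/(4s+3)})$ in the relevant conditional probabilities; the product $n\cdot n^{-s/(4s+3)}\cdot(\text{per-term signal})$ must be shown to be $O(n^{1/2})$, which is where the exponent $2s/(4s+3)$ enters and must be checked to leave the needed slack. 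This is the technically delicate ``non-uniform permutation'' computation flagged in the introduction as a notoriously hard task, and it is the heart of the proof.
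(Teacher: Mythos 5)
Your overall architecture matches the paper: you take the tensor-product perturbation of \cite{neykov2020minimax} (uniform $(X,Y)$ marginal, bump oscillating in $X$ at spatial scale $n^{-1/(4s+3)}$, amplitude tuned to $L^1$-separation $\asymp\Zeta n^{-2s/(4s+3)}$), you analyze the first two moments of $\xi_n$ under the alternative, and you invoke the binning $\chi^2$-type test for the matching upper bound. That is the right skeleton, but two of your steps are genuinely wrong.

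The total-variation step is wrong and internally contradicts the rest of the theorem. You claim $\TV(H_{1,n}(\Zeta),H_0)=O(\Zeta n^{-2s/(4s+3)})\to0$ as $n\to\infty$ for each fixed $\Zeta$; but $H_{1,n}(\Zeta)$ and $H_0$ are the $n$-fold product laws of the sample, and the product TV does not inherit the single-observation $L^1$ rate. If the product TV vanished for every fixed $\Zeta$, then no test could have nontrivial asymptotic power, contradicting part~\ref{thm:neybov2}'s assertion that the binning test succeeds once $\Zeta$ is large. The paper bounds the product $\chi^2$-distance instead: with $m=m'\asymp n^{2/(4s+3)}$ and $\rho^3\asymp n^{-(2s+3)/(4s+3)}$ one checks $(n\rho^6)^2 m(m')^2\to C(\Zeta)$, a strictly positive constant depending only on $\Zeta$, so $\TV\le\sqrt{\chi^2}$ stays bounded away from zero for fixed $\Zeta$ and only vanishes in the outer limit $\Zeta\to0$. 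Your argument drops the factor $n$ in $n\rho^6$, which is exactly what keeps the product TV from shrinking.

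The step from a variance bound to ``$\sqrt n\,\xi_n$ asymptotically has the same Gaussian law as under $H_0$'' is also unsupported: boundedness of $\xi_n$ plus a variance bound gives tightness, not a CLT, and a CLT under the non-product alternative law is precisely the hard ``non-uniform permutation'' problem you flag; it is nowhere established. The paper sidesteps the CLT entirely. Lemma~\ref{lem:tech11} shows only that $n\E[\widehat\xi_n^2]$ under $\P_{1,n}$ differs from its null value by $O(n\rho^6 m)\to0$, hence converges to $\sigma^2$, and then a Markov bound gives
\[
\limsup_n\P_{H_{1,n}}\big(\sqrt n\,\widehat\xi_n>\Phi_\sigma^{-1}(1-\alpha)\big)\le \frac{\sigma^2}{\{\Phi_\sigma^{-1}(1-\alpha)\}^2}=\frac{1}{\{\Phi^{-1}(1-\alpha)\}^2}=:\beta_\alpha,
\]
which is $<1$ if and only if $\Phi^{-1}(1-\alpha)>1$, i.e.\ $\alpha<1-\Phi(1)\approx0.159$. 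That is why the theorem carries the restriction $\alpha<0.1$; under your claimed CLT argument the rejection probability would tend to $\alpha$ and the restriction would be pointless, which signals that your route is not the one that closes the proof. Your coupling idea for the moment computation is plausible in spirit---the paper's Lemma~\ref{lem:tech11} carries out an explicit version of it, averaging over the Rademacher signs and using the orthogonality in \eqref{eq:specific}---but without pushing it to the second-moment bound $O(n\rho^6 m)\to0$ the power statement does not follow.
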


\begin{remark}
Our proof of Theorem \ref{thm:neybov}\ref{thm:neybov1} is
different from the approach we used to prove
Theorem~\ref{thm:powerless}\ref{thm:pl1}.  It
depends on the fact that $\sqrt{n}\xi_n$ has the same asymptotic mean and variance under a null hypothesis and 
a special non-standard local alternative sequence constructed in
\citet{MR4319245}. We show this in a direct calculation. 
\end{remark}


\section{Conclusion}\label{sec:conclu}

In this manuscript, we explore the use of Azadkia--Chatterjee's
conditional dependence coefficient in inferential tasks.
Specifically, we adopt the framework of conditional randomization
tests in order to study the power of Azadkia--Chatterjee-type tests of
conditional independence.  Our analyses take up two types of local
alternatives: First, a rather general quadratic mean differentiable
class and second, a rougher H\"older class.  In these settings, we
prove that the CRT-based Azadkia--Chatterjee test is unfortunately
statistically inefficient.

The current analyses are focused on the situation when $\mX$ and $Y$
are independent, 
which makes the required analysis of permutation statistics mathematically
tractable.  Indeed, while it would be natural and interesting to also study
cases where $\mX$ and $Y$ are dependent, entirely new technical tools
would need to be developed to attack this problem.  This said, we
conjecture that the inefficiency of Azadkia--Chatterjee-type test
persists for more general local alternatives, with $\mX$ and $Y$ are
dependent.

Finally, the inefficiency we demonstrate motivates further efforts to
develop variants of the considered approach.  One possible avenue
would be to develop tests that use modified versions of
Azadkia--Chatterjee's conditional dependence coefficient, in which one
uses $k$-nearest neighbor graphs with $k$ allowed to tend to infinity
as the sample size $n$ increases; in the unconditional setting recent progress in this direction was made by \citet{lin2021boosting}.  Another interesting topic for
future research would be a generalization of the coefficient to the
setting where all of $\mX,\mY,\mZ$ are multivariate.



\section{Proof of Theorem \ref{thm:powerless}}\label{sec:main-proof}

\subsection{Proof of Theorem~\ref{thm:powerless}\ref{thm:pl1}}\label{subsebsec:711}

\begin{proof}[Proof of Theorem~\ref{thm:powerless}\ref{thm:pl1}]

The proof is divided into three steps. 
The first step reviews Le~Cam's third lemma and 
introduces graph theoretic notions. 
The second step derives the distribution of $\xi_n$ under the local alternative. 
The third step computes the local power. 

{\bf Step I-1.} To derive the local alternative distribution of $\xi_n$, 
we will use Le~Cam's third lemma \citep[Theorem~7.2 and
Example~6.7]{MR1652247}. The lemma states that if under the null hypothesis, 
\[
\Big(\sqrt{n}\xi_n,\frac{1}{\sqrt{n}}\sum_{i=1}^n\dot{\ell}_0(\mX_i,Y_i,\mZ_i)\Big)
\stackrel{\sf d}{\longrightarrow}
N\bigg(\bigg(\begin{matrix}0\\0\end{matrix}\bigg),
\bigg(\begin{matrix}\sigma^2 & \tau\\
\tau & I_0\end{matrix}\bigg)\bigg)
\yestag\label{eq:lecam3-no}
\]
where $\sigma^2,\tau$ are fixed constants and $I_0:=\E\{\dot{\ell}_0(\mX,Y,\mZ)^2\}$ equals the Fisher information for $\Delta$ at $0$, then under the local alternative hypothesis, we have
\[
\sqrt{n}\xi_n
\stackrel{\sf d}{\longrightarrow}
N(\Delta_0\tau,\sigma^2).
\]
In order to employ the Cram\'er--Wold device to prove
\eqref{eq:lecam3-no} for some $\sigma^2$ and $\tau$, we need to show that under the null, for any real numbers $a$ and $b$, 
\[
a\sqrt{n}\xi_{n}+ bn^{-1/2}\sum_{i=1}^{n}\dot{\ell}_0(\mX_i,Y_i,\mZ_i)
\stackrel{\sf d}{\longrightarrow}
N\Big(0,a^2\sigma^2+2ab\tau+b^2I_0\Big).
\yestag\label{eq:cramerwold1-no}
\]
To this end, first notice that \citet[Theorem~9.1]{MR4352523} show
\[
\frac{1}{n^2}\sum_{i=1}^{n}\{R_i-\min(R_i,R_{N(i)})\}
\stackrel{\sf a.s.}{\longrightarrow}
\int\E[\Var\{\ind(Y\ge t)\given \mX\}]\d\P_{Y}(t).
\]
Therefore, by Slutsky's theorem, it suffices to establish \eqref{eq:lecam3-no} for $\sqrt{n}\widehat\xi_n$ instead of $\sqrt{n}\xi_n$, where 
\begin{align*}
\widehat\xi_n
&:=\frac{n^{-2}\sum_{i=1}^{n}\{\min(R_i,R_{M(i)})-\min(R_i,R_{N(i)})\}}
{\int\E[\Var\{\ind(Y\ge t)\given \mX\}]\d\P_{Y}(t)}.
\yestag\label{eq:xinh}
\end{align*}
Moreover, consider the ``oracle'' version of $\widehat\xi_n$ defined as
\begin{align*}
\widecheck\xi_n
&:=\frac{n^{-1}\sum_{i=1}^{n}[\min\{F_Y(Y_i),F_Y(Y_{M(i)})\}
-\min\{F_Y(Y_i),F_Y(Y_{N(i)})\}]}
{\int\E[\Var\{\ind(Y\ge t)\given \mX\}]\d\P_{Y}(t)}.
\yestag\label{eq:xint}
\end{align*}
We have the following lemma for $\widehat\xi_n$ and
$\widecheck\xi_n$.  This result and lemmas given later in this section
are derived in the supplement.
\begin{lemma}\label{lem:tech4}
Under the null hypothesis,  $\sqrt{n}\widehat\xi_n-\sqrt{n}\widecheck\xi_n=o_{\P}(1)$. 
\end{lemma}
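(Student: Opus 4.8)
The goal is to show that replacing each rank $R_i$ by its population analogue $nF_Y(Y_i)$ inside the numerator of the Azadkia--Chatterjee coefficient incurs only an $o_{\P}(\sqrt n)$ error in the (deterministically normalized) statistic. Since both $\widehat\xi_n$ and $\widecheck\xi_n$ have the same (constant) denominator, it suffices to show that
\[
\Delta_n:=\frac{1}{\sqrt n}\sum_{i=1}^n\Big[\big\{\min(R_i,R_{M(i)})-\min(R_i,R_{N(i)})\big\}\cdot\tfrac1n - \big\{\min(\widetilde F_i,\widetilde F_{M(i)})-\min(\widetilde F_i,\widetilde F_{N(i)})\big\}\Big]=o_{\P}(1),
\]
where I abbreviate $\widetilde F_i:=F_Y(Y_i)$. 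The first thing I would note is the elementary identity $\min(a,b)-\min(a,c)=\int_{\min(a,c)}^{a}\{\ind(t\le b)-\ind(t\le c)\}\,\d t$ type decomposition, or more directly $\min(a,b)=\tfrac12(a+b-|a-b|)$, so that $\min(R_i,R_{M(i)})-\min(R_i,R_{N(i)})=\tfrac12\big(R_{M(i)}-R_{N(i)}-|R_i-R_{M(i)}|+|R_i-R_{N(i)}|\big)$, and similarly for the $\widetilde F$'s. This reduces the problem to controlling differences of the form $\tfrac1n|R_i-R_{N(i)}|-|\widetilde F_i-\widetilde F_{N(i)}|$ and the analogous $M(i)$ term, plus the linear pieces $\tfrac1n(R_{M(i)}-R_{N(i)})-(\widetilde F_{M(i)}-\widetilde F_{N(i)})$.

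The key estimate is the uniform bound on the empirical process: since $Y$ is continuous, $\widetilde F_1,\dots,\widetilde F_n$ are i.i.d.\ uniform on $[0,1]$ and their ranks satisfy $\max_{i}\big|\tfrac1n R_i-\widetilde F_i\big|=\max_i\big|\tfrac1n\sum_j\ind(\widetilde F_j\le\widetilde F_i)-\widetilde F_i\big|\le \|G_n-\mathrm{Id}\|_\infty=O_{\P}(n^{-1/2}\sqrt{\log\log n})$ by the law of the iterated logarithm for the uniform empirical distribution function $G_n$ (or simply $O_{\P}(n^{-1/2}\sqrt{\log n})$ by a crude DKW bound). Feeding this into the triangle inequality, each of the $n$ summands of $\Delta_n$ is $O_{\P}(n^{-1/2}\sqrt{\log n})$ \emph{uniformly} in $i$; naively summing gives $\Delta_n=O_{\P}(\sqrt{n\log n})$, which is far too weak. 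The point that makes the lemma work is a \emph{cancellation / reindexing} argument: each index $j$ appears as $N(i)$ for at most $\kC_p$ values of $i$ and as $M(i)$ for at most $\kC_{p+q}$ values of $i$ (Lemma~\ref{lem:MD}), so the map $i\mapsto N(i)$ (resp. $i\mapsto M(i)$) is ``almost a bijection'' of bounded-degree type. Writing $\Delta_n$ as a sum over edges of the two nearest-neighbour graphs and telescoping, one finds that the linear terms $\sum_i\{\tfrac1n R_{M(i)}-\widetilde F_{M(i)}\}$ etc.\ are each equal, up to a bounded multiplicative combinatorial factor, to $\sum_j c_{n,j}\{\tfrac1n R_j-\widetilde F_j\}$ where $\sum_j|c_{n,j}|=O(n)$ — still not enough on its own. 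So the real gain must come from the fact that $\tfrac1n R_j-\widetilde F_j$ is not merely small pointwise but has mean zero and the errors are weakly correlated across $j$; I would therefore bound $\E\Delta_n^2$ directly (or rather the contribution of the linear terms), expanding the square, and use that $\cov(\tfrac1nR_j-\widetilde F_j,\tfrac1nR_k-\widetilde F_k)=O(n^{-1})$ together with the bounded-degree structure to get $\E\Delta_n^2=O(n^{-1}\cdot n\cdot\text{(something)})\to 0$. Equivalently, and more cleanly, one can condition on the order statistics of $Y$ (equivalently on $[\widetilde F_i]$): then $\tfrac1n R_i-\widetilde F_i$ is deterministic, and the nearest-neighbour indices $N(i),M(i)$ are functions of $(\mX_i,\mZ_i)$ alone, which under the null are independent of the $Y_i$'s; so one can just invoke the uniform $O_{\P}(n^{-1/2}\sqrt{\log n})$ bound \emph{on the realised values} while exploiting the independence to replace the worst-case sum by its expectation.

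Concretely, the cleanest route I would pursue is: (i) condition on $\mathcal{F}:=\sigma([\mX_i,\mZ_i]_{i=1}^n)$, under which (by independence of $Y$ and $(\mX,\mZ)$ in the null) the graphs $\cG_n^{\mX}$, $\cG_n^{(\mX,\mZ)}$ and hence the maps $N(\cdot),M(\cdot)$ are fixed and the $Y_i$ remain i.i.d.; (ii) on this event, apply the Hájek-type linearisation $\min(R_i,R_{M(i)})-\min(R_i,R_{N(i)}) = n\big\{\min(\widetilde F_i,\widetilde F_{M(i)})-\min(\widetilde F_i,\widetilde F_{N(i)})\big\} + (\text{remainder}_i)$, where $|\text{remainder}_i|\le 2\max_k|R_k-n\widetilde F_k| = O_{\P}(\sqrt{n\log n})$ uniformly; (iii) observe that the remainders, summed over $i$, telescope: using $\min(a,b)-\min(a,c)$ expressed via indicators $\int_0^\infty\{\ind(a>t)\ind(b>t)-\ind(a>t)\ind(c>t)\}\,\d t$, write $\tfrac1n\sum_i(\text{remainder}_i)$ as $\int_0^1\big[\widehat{h}_n(t)-h_n(t)\big]\,\d t$ for functions $\widehat h_n,h_n$ built from the empirical vs.\ true distribution function composed through the (fixed, bounded-degree) graph structure, and bound this integral by $\|G_n-\mathrm{Id}\|_\infty$ times a combinatorial constant; crucially, because edge multiplicities are bounded by $\kC_p,\kC_{p+q}$, the relevant functions differ from a fixed multiple of $G_n-\mathrm{Id}$ only in a controlled way, giving $\tfrac1n\sum_i(\text{remainder}_i)=O_{\P}(n^{-1/2}\sqrt{\log n})\cdot O(1)$, hence $\Delta_n=O_{\P}(n^{-1/2}\sqrt{\log n})=o_{\P}(1)$. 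The main obstacle, and the step deserving the most care, is precisely (iii): turning the pointwise $O(n^{-1/2}\sqrt{\log n})$ control of $\tfrac1n R_i-\widetilde F_i$ into an $o_{\P}(1)$ control of the \emph{scaled sum} $\tfrac1{\sqrt n}\sum_i(\cdots)$; this cannot be done by brute-force triangle inequality and genuinely requires exploiting either the bounded-degree telescoping (so that the sum collapses to an integral of the empirical process) or a second-moment computation using the weak correlation of rank errors. Everything else — the $\min(a,b)=\tfrac12(a+b-|a-b|)$ algebra, the DKW/LIL bound, Lemma~\ref{lem:MD} for the degree bounds, and the independence of $Y$ from $(\mX,\mZ)$ under $H_0$ — is routine input.
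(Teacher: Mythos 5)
Your overall architecture is pointed in the right direction, and you correctly diagnose the key obstacle (a naive triangle-inequality bound only gives $O_{\P}(1)$, not $o_{\P}(1)$). But the paper's own proof is essentially a two-line citation: it applies the H\'ajek representation theorem of \citet[Lemma~D.1]{deb2020kernel} separately to the $M$-graph sum $n^{-2}\sum_i\min(R_i,R_{M(i)})$ and the $N$-graph sum $n^{-2}\sum_i\min(R_i,R_{N(i)})$, each centered by $\{n^2(n-1)\}^{-1}\sum_{i\ne j}\min(R_i,R_j)$, and subtracts. That lemma is precisely the rank-versus-CDF linearization your step~(iii) is groping toward.

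The genuine gap is in step~(iii). For continuous $Y$ one has $\min(R_i,R_j)=nG_n(\min(Y_i,Y_j))$ where $G_n$ is the empirical CDF of the $Y$'s; writing $e:=G_n-F_Y$, $a_i:=\min(Y_i,Y_{M(i)})$, $b_i:=\min(Y_i,Y_{N(i)})$, your remainder average is exactly
\[
\tfrac{1}{n}\sum_{i=1}^n \text{rem}_i \;=\; \sum_{i=1}^n\big[e(a_i)-e(b_i)\big],
\]
a sum of $n$ terms each of magnitude $\lVert e\rVert_\infty=O_{\P}(n^{-1/2})$. Your claim that bounded-degree telescoping collapses this to $O(\lVert e\rVert_\infty)\cdot O(1)$ --- a gain of a full factor of $n$ --- is unjustified and, in fact, false in general: even in the most favorable scenario where the multiset $\{a_i\}$ is a permutation of $\{Y_j\}$, one has $\sum_j e(Y_j)=\sum_j R_j/n-\sum_j F_Y(Y_j)=O_{\P}(\sqrt n)$, not $O_{\P}(n^{-1/2})$, and similarly for the $b_i$-sum. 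With that, $\Delta_n=n^{-1/2}\cdot\tfrac{1}{n}\sum_i\text{rem}_i=O_{\P}(1)$, which is not the conclusion of the lemma. What is actually required is the alternative you mention but do not carry out: a H\'ajek projection of the graph-based rank sum, done conditionally on the $(\mX,\mZ)$-sample so that $M(\cdot),N(\cdot)$ are deterministic bounded-degree maps and the $Y_i$'s remain i.i.d., with a second-moment estimate showing the post-projection remainder is $o_{\P}(\sqrt n)$. This degenerate-$U$-statistic argument is precisely what \citet[Lemma~D.1]{deb2020kernel} establishes, and it is the nontrivial input that your sketch leaves undone. (Your concluding order-of-magnitude bookkeeping is also internally inconsistent: since $\Delta_n=n^{-1/2}\big(\tfrac{1}{n}\sum_i\text{rem}_i\big)$, your claimed bound $\tfrac{1}{n}\sum_i\text{rem}_i=O_{\P}(n^{-1/2}\sqrt{\log n})$ would yield $\Delta_n=O_{\P}(n^{-1}\sqrt{\log n})$ rather than $O_{\P}(n^{-1/2}\sqrt{\log n})$; this discrepancy is in the harmless direction, but it signals that the orders were not tracked carefully.)
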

Thus we only need to show
\[
a\sqrt{n}\widecheck\xi_{n}+ bn^{-1/2}\sum_{i=1}^{n}\dot{\ell}_0(\mX_i,Y_i,\mZ_i)
\stackrel{\sf d}{\longrightarrow}
N\Big(0,a^2\sigma^2+2ab\tau+b^2I_0\Big).
\yestag\label{eq:cramerwold1-check}
\]
The idea of proving \eqref{eq:cramerwold1-check} is to first show a conditional central limit result, 
\begin{align*}
a\sqrt{n}\widecheck\xi_n+ bn^{-1/2}\sum_{i=1}^{n}\dot{\ell}_0(\mX_i,Y_i,\mZ_i)
\Biggiven \cF_{n}
\stackrel{\sf d}{\longrightarrow}
N\Big(0,a^2\sigma^2+2ab\tau+b^2I_0\Big)\\
~~~\text{for almost every sequence $[(\mX_n,\mZ_n)]_{n\ge1}$},
\yestag\label{eq:condCLT-no}
\end{align*}
where $\cF_{n}$ denotes the $\sigma$-field generated by
$(\mX_1,\mZ_1),\dots,(\mX_n,\mZ_n)$, 
{i.e., for almost every $\omega$ of the probability space supporting the $(\mX_i,\mZ_i)$'s, 
\[
a\sqrt{n}\widecheck\xi_n\Big(\big[\big(\mX_i(\omega),Y_i,\mZ_i(\omega)\big)\big]_{i=1}^{n}\Big)+ bn^{-1/2}\sum_{i=1}^{n}\dot{\ell}_0\big(\mX_i(\omega),Y_i,\mZ_i(\omega)\big)\stackrel{\sf d}{\longrightarrow}N(0,a^2\sigma^2+2ab\tau+b^2I_0)
\]
\citep[Theorem~10.14]{MR1102015}
,}
and then deduce the desired unconditional central limit result \eqref{eq:cramerwold1-check}, and thus \eqref{eq:cramerwold1-no}.

{\bf Step I-2.} To show \eqref{eq:condCLT-no}, we 
introduce the language of graph theory. 
We write 
\begin{align*}
S_n&=a\sqrt{n}\widecheck\xi_n+ bn^{-1/2}\sum_{i=1}^{n}\dot{\ell}_0(\mX_i,Y_i,\mZ_i)\\
&=a\gamma^{-1} n^{-1/2}\sum_{i=1}^{n}\min\{F_{Y}(Y_i),F_{Y}(Y_{M(i)})\}
-a\gamma^{-1} n^{-1/2}\sum_{i=1}^{n}\min\{F_{Y}(Y_i),F_{Y}(Y_{N(i)})\}\\
&\mkern450mu +bn^{-1/2}\sum_{i=1}^{n}\dot{\ell}_0(\mX_i,Y_i,\mZ_i)\\
&=a\gamma^{-1} n^{-1/2}\sum_{i=1}^{n}\sum_{j:i\to j\in\cE(\cG_n)}K_{\wedge}(Y_i,Y_{j})
-a\gamma^{-1} n^{-1/2}\sum_{i=1}^{n}\sum_{k:i\to k\in\cE(\cG^{\mX}_n)}K_{\wedge}(Y_i,Y_{k})\\
&\mkern450mu +bn^{-1/2}\sum_{i=1}^{n}\dot{\ell}_0(\mX_i,Y_i,\mZ_i),
\end{align*}
where for all $Y$ independent of $\mX$, 
\[
\gamma:=\int\E[\Var\{\ind(Y\ge t)\given \mX\}]\d\P_{Y}(t)=\int\E[\Var\{\ind(Y\ge t)\}]\d\P_{Y}(t)=\frac16,
\]
$\cG_n$ is the directed nearest neighbor graph (NNG) of the vertices $[(\mX_i,\mZ_i)]_{i=1}^n$, 
$\cG^{\mX}_n$ is the directed nearest neighbor graph (NNG) of the vertices $[\mX_i]_{i=1}^n$, 
and $K_{\wedge}(y_1,y_2):=\min\{F_{Y}(y_1),F_{Y}(y_2)\}$. 
Next we define
\begin{align*}
&V_{i;1}:=n^{-1/2}\Big\{6a\sum_{j:i\to j\in\cE(\cG_n)}K_{\wedge}(Y_i,Y_{j})
-6a\sum_{k:i\to k\in\cE(\cG^{\mX}_n)}K_{\wedge}(Y_i,Y_{k})\Big\},~~~\\
&V_{i;2}:=n^{-1/2}b\dot{\ell}_0(\mX_i,Y_i,\mZ_i),
~~~~\text{and}~~~~
V_i:=V_{i;1}+V_{i;2},
\yestag\label{eq:Diff1-no}
\end{align*}
such that $S_n$ can be written as $\sum_{i=1}^{n}V_i$. 
Observe that, since $\big[Y_i\big]_{i=1}^{n}$ is independent of $\big[(\mX_i,\mZ_i)\big]_{i=1}^{n}$ under the null, 
\begin{align*}
&\E(V_{i;1}\given\cF_n)=n^{-1/2}\Big[6a\E\Big\{K_{\wedge}(Y_i,Y_{M(i)})\Biggiven \cF_n\Big\}
-6a\E\Big\{K_{\wedge}(Y_i,Y_{N(i)})\Biggiven \cF_n\Big\}\Big]\\
&\mkern86mu=n^{-1/2}\Big\{6a(1/3)-6a(1/3)\Big\}=0,\\
&\E(V_{i;2}\given\cF_n)=\E\Big\{\dot{\ell}_0(\mX_i,Y_i,\mZ_i)\Biggiven \cF_n\Big\}=0,
~~~~\text{by Assumption~\ref{asp:only}\ref{asp:4},}\\
\text{and}~~~~
&\E(V_{i}\given\cF_n)=a\E(V_{i;1}\given\cF_n)+b\E(V_{i;2}\given\cF_n)=0.
\end{align*}

To establish a conditional central limit theorem for $S_n$, we make use of the following lemma.

\begin{lemma}\label{lem:tech1}
  It holds that
\[
\sup_{z\in\R}\Big\lvert\Pr\Big(\frac{S_n}{\sqrt{\Var(S_n\given \cF_n)}}\le z\Biggiven\cF_n\Big)-\Phi(z)\Big\rvert 
\le 75 C_{p+q}^{5(1+\epsilon)}\frac{\E(\sum_{i=1}^{n}\lvert V_i\rvert^{2+\epsilon}\given \cF_n)}{\{\Var(S_n\given \cF_n)\}^{(2+\epsilon)/2}}
~~~~\text{a.s.,}
\yestag\label{eq:BE-no}
\]
where $C_{p+q}$ is a constant depending only on $p+q$. 
\end{lemma}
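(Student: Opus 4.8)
The statement is a Berry--Esseen-type bound for the conditional distribution of $S_n=\sum_{i=1}^n V_i$ given $\cF_n$, and the exponent $2+\epsilon$ together with the constant $75\,C_{p+q}^{5(1+\epsilon)}$ strongly suggests that the proof proceeds by recognizing $[V_i]_{i=1}^n$ as a collection of random variables satisfying a \emph{dependency graph} structure (conditionally on $\cF_n$) and then invoking a known Berry--Esseen theorem for sums over dependency graphs, such as the one of \citet{MR2073183} that was referenced earlier in the excerpt. So the plan is: (i) condition on $\cF_n$, i.e.\ freeze the point cloud $[(\mX_i,\mZ_i)]_{i=1}^n$, so that the only randomness left is in $[Y_i]_{i=1}^n$, which are i.i.d.\ (and independent of $\cF_n$) under the null; (ii) identify a dependency graph for the array $[V_i]_{i=1}^n$; (iii) bound the maximal degree of that dependency graph by a constant depending only on $p+q$; (iv) plug into the cited Berry--Esseen theorem.

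\medskip

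First I would set up the conditional picture. Under $H_0$, $[Y_i]_{i=1}^n \indep \cF_n$ and the $Y_i$ are i.i.d., so conditionally on $\cF_n$ the nearest-neighbor graphs $\cG_n$ and $\cG_n^{\mX}$ are fixed deterministic directed graphs on $\zahl{n}$. Each $V_i=V_{i;1}+V_{i;2}$ is a function of $Y_i$ together with $\{Y_j: i\to j\in\cE(\cG_n)\}$ and $\{Y_k: i\to k\in\cE(\cG_n^{\mX})\}$, i.e.\ of $Y_i$ and the $Y$-values at the out-neighbors of $i$ in the two NNGs. Declare $i\sim i'$ in the dependency graph if the index sets $\{i\}\cup N_{\cG_n}(i)\cup N_{\cG_n^{\mX}}(i)$ and $\{i'\}\cup N_{\cG_n}(i')\cup N_{\cG_n^{\mX}}(i')$ intersect. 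Because the $Y_i$'s are mutually independent, $\{V_i\}_{i\in A}$ and $\{V_i\}_{i\in B}$ are conditionally independent whenever no edge of the dependency graph joins $A$ to $B$; hence this is a valid dependency graph.

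\medskip

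The main technical point — and the place where Lemma~\ref{lem:MD} enters — is bounding the maximal degree $D$ of the dependency graph by an absolute constant. Each vertex $i$ has out-degree $1$ in $\cG_n$ and out-degree $1$ in $\cG_n^{\mX}$ (ties broken at random), so $|\{i\}\cup N_{\cG_n}(i)\cup N_{\cG_n^{\mX}}(i)|\le 3$. For $i\sim i'$ one needs a common index in these size-$\le 3$ sets; counting how many $i'$ can share a given index $m$ with $i$ requires controlling the number of points whose nearest neighbor (in $(\mX,\mZ)$-space, resp.\ in $\mX$-space) is $m$, which by Lemma~\ref{lem:MD} is at most $\kC_{p+q}$ (resp.\ $\kC_p\le\kC_{p+q}$). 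Combining these bounds gives $D\le c\,\kC_{p+q}$ for an explicit small constant $c$; the constant $C_{p+q}$ in the statement should be taken to absorb $\kC_{p+q}$, and the numerical factor $75$ and the power $5(1+\epsilon)$ are exactly what the \citet{MR2073183} bound produces once $D$ is expressed in terms of $\kC_{p+q}$. I expect the bookkeeping in this degree count — in particular making sure that a neighbor-of-a-neighbor does not secretly inflate the degree, since $V_i$ depends only on $Y_i$ and the $Y$-values at $i$'s \emph{immediate} out-neighbors, not on second-order neighbors — to be the only genuinely delicate step; once $D=O(\kC_{p+q})$ is in hand, the conclusion is an immediate application of the dependency-graph Berry--Esseen theorem with the $(2+\epsilon)$-th moment in the numerator and $\Var(S_n\given\cF_n)^{(2+\epsilon)/2}$ in the denominator, holding for a.e.\ realization of $\cF_n$.
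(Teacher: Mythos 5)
Your proposal is correct and follows essentially the same route as the paper: condition on $\cF_n$, build a dependency graph for $[V_i]$ from the two NNGs, bound its maximum degree by a constant depending only on $p+q$ via Lemma~\ref{lem:MD}, and invoke the dependency-graph Berry--Esseen theorem of \citet[Theorem~2.7]{MR2073183}. The only cosmetic difference is the dependency graph itself: you join $i\sim i'$ exactly when the index sets $\{i,M(i),N(i)\}$ and $\{i',M(i'),N(i')\}$ intersect (the minimal valid choice, with degree $O(\kC_{p+q})$), whereas the paper, following \citet[Appendix~C.6]{deb2020kernel}, joins $i\sim j$ whenever $i$ and $j$ are within graph distance $2$ in $\cG_n$ or $\cG_n^{\mX}$ (a larger but still valid dependency graph, with degree $\lesssim(\kC_{p+q}+\kC_p)^2$); either bound is absorbed into $C_{p+q}$.
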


To control the right-hand side of \eqref{eq:BE-no}, we get by the
``$c_r$-inequality'' that
\[
\E\Big(\sum_{i=1}^{n}\lvert V_i\rvert^{2+\epsilon}\Biggiven \cF_n\Big)
\le 2^{1+\epsilon}
\Big\{\E\Big(\sum_{i=1}^{n}\lvert V_{i;1}\rvert^{2+\epsilon}\Biggiven \cF_n\Big)
     +\E\Big(\sum_{i=1}^{n}\lvert V_{i;2}\rvert^{2+\epsilon}\Biggiven \cF_n\Big)\Big\}.
\]
Here
\[
n^{\epsilon/2}\E\Big(\sum_{i=1}^{n}\lvert V_{i;1}\rvert^{2+\epsilon}\Biggiven \cF_n\Big)\le \lvert6a\rvert^{2+\epsilon}
~~~~\text{and}~~~~
n^{\epsilon/2}\E\Big(\sum_{i=1}^{n}\lvert V_{i;2}\rvert^{2+\epsilon}\Biggiven \cF_n\Big)
\stackrel{\sf a.s.}{\longrightarrow}
\E\Big\{\lvert b\dot{\ell}_0(\mX,Y,\mZ)\rvert^{2+\epsilon}\Big\},
\]
where the former follows from $\lvert V_{i;1}\rvert\le\lvert6a\rvert
n^{-1/2}$ and the latter from the strong law of large numbers and Assumption~\ref{asp:only}\ref{asp:5}. 

{\bf Step II.} In order to show \eqref{eq:condCLT-no}, in view of \eqref{eq:BE-no}, it suffices to show 
\[
\Var(S_n\given \cF_n)
\stackrel{\sf a.s.}{\longrightarrow}
a^2\sigma^2+2ab\tau+b^2I_0. 
\yestag\label{eq:ES-no}
\]
for some fixed $\sigma^2>0$ and $\tau$, and recall $I_0:=\E\{\dot{\ell}_0(\mX,Y,\mZ)^2\}$.
We proceed in two sub-steps. We will first compute $\Var(S_n\given \cF_n)$, 
then claim $\Var(S_n\given \cF_n)-\Var(S_n)
\stackrel{\sf a.s.}{\longrightarrow}0$ and determine the limit value of $\Var(S_n)$ accordingly. 

{\bf Step II-1.} 
Set
\begin{flalign*}
 & \gamma_{1;a}:=\E\Big[\Big\{6aK_{\wedge}(Y,Y')-2a\Big\}^2\Big],
 & \gamma_{2;a}:=\E\Big[\Big\{6aK_{\wedge}(Y,Y')-2a\Big\}\Big\{6aK_{\wedge}(Y,Y'')-2a\Big\}\Big],
 &                                                                                             \\
 & \gamma_{4;a,b}^*(\mx,\mz):=\E\Big[\Big\{6aK_{\wedge}(Y,Y')-2a\Big\}\Big\{b\dot{\ell}_0(\mx,Y,\mz)\Big\}\Big],\mkern-250mu
 & \gamma_{4;a,b}:=\E\Big\{\gamma_{4;a,b}^*(\mX,\mZ)\Big\},
 &
\\
 & \gamma_{5;b}^*(\mx,\mz):=\E\Big[\Big\{b\dot{\ell}_0(\mx,Y,\mz)\Big\}^2\Big],
 & \gamma_{5;b}:=\E\Big\{\gamma_{5;b}^*(\mX,\mZ)\Big\},
\yestag\label{eq:Diff2-no}
\end{flalign*}
where $Y'$ and $Y''$ are independent copies of $Y$.
We obtain
\[
\Var(S_n\given \cF_n)
 =\E(S_n^2\given \cF_n)
 =\sum_{i=1}^{n}\E(V_i^2\given \cF_n)+\sum_{i\ne j}\E(V_iV_j\given \cF_n),
\]
where
\begin{align*}
\sum_{i=1}^{n}\E(V_i^2\given \cF_n)&=
n^{-1}\sum_{i=1}^{n}\Big\{2\gamma_{1;a}-2\sum_{j:i\to j\in\cE(\cG_n)\cap \cE(\cG^{\mX}_n)}\gamma_{1;a}\\
&\qquad-2\sum_{(j,k):i\to j\in\cE(\cG_n), i\to k\in\cE(\cG^{\mX}_n), j\ne k}\gamma_{2;a}
+\gamma_{5;a,b}^*(\mX_i,\mZ_i)\Big\},
\yestag\label{eq:Gamma1-no}
\end{align*}
and
\begin{align*}
&\sum_{i\ne j}\E(V_iV_j\given \cF_n)\\
&=n^{-1}\Big\{
\sum_{\substack{(i,j)~\text{distinct} \\ i\to j, j\to i\in \cE(\cG_n)}}
\gamma_{1;a}
+\sum_{\substack{(i,j,k)~\text{distinct} \\ i\to k, j\to k\in \cE(\cG_n)\\
\text{or}~i\to j, j\to k\in \cE(\cG_n)\\
\text{or}~i\to k, j\to i\in \cE(\cG_n)}}
\gamma_{2;a}
+\sum_{\substack{(i,j)~\text{distinct} \\ i\to j, j\to i\in \cE(\cG^{\mX}_n)}}
\gamma_{1;a}
+\sum_{\substack{(i,j,k)~\text{distinct} \\ i\to k, j\to k\in \cE(\cG^{\mX}_n)\\
\text{or}~i\to j, j\to k\in \cE(\cG^{\mX}_n)\\
\text{or}~i\to k, j\to i\in \cE(\cG^{\mX}_n)}}
\gamma_{2;a}\\
&\qquad-2\sum_{\substack{(i,j)~\text{distinct} \\ i\to j\in \cE(\cG_n), j\to i\in \cE(\cG^{\mX}_n)}}
\gamma_{1;a}
-2\sum_{\substack{(i,j,k)~\text{distinct} \\ i\to k\in \cE(\cG_n), j\to k\in \cE(\cG^{\mX}_n)\\
\text{or}~i\to j\in \cE(\cG_n), j\to k\in \cE(\cG^{\mX}_n)\\
\text{or}~i\to k\in \cE(\cG_n), j\to i\in \cE(\cG^{\mX}_n)}}
\gamma_{2;a}\\
&\qquad+2\sum_{\substack{(i,j)~\text{distinct} \\ j\to i\in \cE(\cG_n)}}
\gamma_{4;a,b}^*(\mX_i,\mZ_i)
-2\sum_{\substack{(i,j)~\text{distinct} \\ j\to i\in \cE(\cG^{\mX}_n)}}
\gamma_{4;a,b}^*(\mX_i,\mZ_i)\Big\}.
\yestag\label{eq:Gamma2-no}
\end{align*}

{\bf Step II-2.} 
We employ the following result.
\begin{lemma}\label{lem:tech5+2}
\[
\Var(S_n\given \cF_n)-\Var(S_n)
\stackrel{\sf a.s.}{\longrightarrow}0.
\yestag\label{eq:ES1-no}
\]
\end{lemma}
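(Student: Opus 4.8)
The plan is to show that the difference $\Var(S_n\given\cF_n)-\Var(S_n)$, viewed as a function of the array $[(\mX_i,\mZ_i)]_{i=1}^n$, concentrates around its mean (which is zero by the tower property) and that the fluctuations are summable almost surely. Since $\Var(S_n)=\E\{\Var(S_n\given\cF_n)\}$, it suffices to prove $\Var(S_n\given\cF_n)-\E\{\Var(S_n\given\cF_n)\}\to 0$ a.s. I would establish this by an $L^2$ (or $L^{1+\eta}$) bound on the conditional variance combined with Borel--Cantelli, after first rewriting $\Var(S_n\given\cF_n)$ in terms of the explicit graph-counting sums obtained in Step II-1.

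First I would package $\Var(S_n\given\cF_n)$, using \eqref{eq:Gamma1-no}--\eqref{eq:Gamma2-no}, as $n^{-1}$ times a finite linear combination (with fixed coefficients $\gamma_{1;a},\gamma_{2;a},\gamma_{4;a,b}^*,\gamma_{5;b}^*$) of local functionals of the two nearest-neighbor graphs $\cG_n$ and $\cG_n^{\mX}$: counts of mutual edges, of in-degree-two configurations, of two-paths, and of the analogous ``cross'' configurations for the pair of graphs, plus sums of $\gamma_{4;a,b}^*(\mX_i,\mZ_i)$ over reverse neighbors and $\gamma_{5;b}^*(\mX_i,\mZ_i)$. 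Each such term is of the form $n^{-1}\sum_{i}h_i$ where $h_i$ depends only on points within a bounded graph-neighborhood of vertex $i$; crucially, by Lemma~\ref{lem:MD} the degrees in both graphs are bounded by the constants $\kC_{p+q}$ and $\kC_p$, so each $h_i$ is bounded (using also that the $\gamma_{4;a,b}^*,\gamma_{5;b}^*$ have finite moments by Assumption~\ref{asp:only}\ref{asp:5}) and each vertex influences only $O(1)$ summands. This "stabilization/bounded-dependence" structure is exactly the setting where the variance of $n^{-1}\sum_i h_i$ is $O(1/n)$: changing one point $(\mX_j,\mZ_j)$ alters only a bounded number of the $h_i$ (those whose neighborhoods it enters or leaves), each by a bounded amount, so an Efron--Stein / bounded-differences argument gives $\Var(\Var(S_n\given\cF_n))=O(1/n)$. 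For the terms involving $\gamma_{4;a,b}^*$ and $\gamma_{5;b}^*$ I would use the moment assumption to get the needed integrability in the Efron--Stein bound (a truncation argument handles the unbounded part, with the tail contribution controlled by $\E_0\{|\dot\ell_0|^{4+\epsilon}\}<\infty$).

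Given $\Var(\Var(S_n\given\cF_n))=O(1/n)$, a first-moment/Chebyshev bound shows $\Var(S_n\given\cF_n)-\Var(S_n)\to 0$ in probability; to upgrade to almost-sure convergence along the whole sequence I would either (i) sharpen the Efron--Stein estimate to an exponential concentration inequality (the bounded-differences/McDiarmid inequality applies to the truncated part, since each coordinate change moves $\Var(S_n\given\cF_n)$ by $O(1/n)$), yielding $\Pr(|\Var(S_n\given\cF_n)-\Var(S_n)|>\varepsilon)\le 2\exp(-cn\varepsilon^2)$ and hence summability, then invoke Borel--Cantelli; or (ii) pass along the subsequence $n_k=k^2$ by Chebyshev plus Borel--Cantelli and fill the gaps using monotone-type control of the increments, though route (i) is cleaner. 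The main obstacle is the bookkeeping in route (i): one must verify that the cross-graph configuration counts in \eqref{eq:Gamma2-no} genuinely have the bounded-influence property when the two graphs $\cG_n$ and $\cG_n^{\mX}$ are built on \emph{coupled} point sets $[(\mX_i,\mZ_i)]$ and $[\mX_i]$, i.e. that inserting or deleting a single index $j$ perturbs the edge sets of \emph{both} graphs in only $O(1)$ places and therefore changes only $O(1)$ of the summands; this follows again from the uniform degree bound of Lemma~\ref{lem:MD} applied to each graph separately, but it is the step that requires care. The finite moment condition Assumption~\ref{asp:only}\ref{asp:5} (with exponent $4+\epsilon>2$) is what makes the $\gamma_{4;a,b}^*$- and $\gamma_{5;b}^*$-terms amenable to the same treatment after truncation.
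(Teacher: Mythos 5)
Your approach is essentially the paper's: decompose $\Var(S_n\given\cF_n)$ via \eqref{eq:Gamma1-no}--\eqref{eq:Gamma2-no} into a finite collection of $n^{-1}\sum_i$-type graph functionals of $\cG_n$ and $\cG_n^{\mX}$, observe the $O(1/n)$ bounded-differences property from the degree bound of Lemma~\ref{lem:MD}, and use Efron--Stein-type concentration plus Borel--Cantelli for each piece. You also correctly note the key obstacle: the second-moment Efron--Stein bound $\Var(\Var(S_n\given\cF_n))=O(1/n)$ is not summable and hence cannot be converted to a.s.\ convergence directly.

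Where you and the paper part ways is in how this obstacle is resolved. You propose either a McDiarmid-type exponential inequality with a truncation step for the unbounded $\gamma_{4;a,b}^*$-terms, or a subsequence-and-interpolation argument. The paper instead invokes the generalized (moment) Efron--Stein inequality of \citet[Theorem~2]{MR2123200} with $q=4$: from $|U_n-U_n^{(j)}|\le C/n$ one gets $\E[|U_n-\E U_n|^4]\le C'/n^2$, which \emph{is} summable, so Markov plus Borel--Cantelli finishes without any truncation. For the terms involving $\gamma_{4;a,b}^*$, the same fourth-moment inequality applies but now the bounded-difference increment carries a factor $\max_i|\gamma_{4;a,b}^*(\mX_i,\mZ_i)|$; the paper controls $\E\{\max_{i\le n}|\gamma_{4;a,b}^*(\mX_i,\mZ_i)|^4\}$ directly via a maximum-of-random-variables bound \citep{gilstein1981bounds,MR813239} using precisely the $(4+\epsilon)$ moment assumption \ref{asp:only}\ref{asp:5} --- no truncation needed, and the $\epsilon>0$ is what keeps this uniformly bounded in $n$. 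Your McDiarmid route can in principle be pushed through (a careful choice of truncation level $M_n$ between $n^{2/(4+\epsilon)}$ and $\sqrt{n/\log n}$ works because $\epsilon>0$), but the fourth-moment inequality is the cleaner device and is exactly why the assumption is $(4+\epsilon)$ rather than $(2+\epsilon)$. Your observation about bookkeeping for the cross-graph terms is valid but, as you anticipate, resolves immediately from applying the degree bound to each of $\cG_n$ and $\cG_n^{\mX}$ separately.
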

Then it remains to prove 
\[
\Var(S_n)\to a^2\sigma^2+2ab\tau+b^2I_0
\yestag\label{eq:ES2-no}
\]
for some fixed $\sigma^2>0$ and $\tau$; notice \eqref{eq:ES1-no} and \eqref{eq:ES2-no} will imply \eqref{eq:ES-no}. 
To this end, in addition to Lemmas~\ref{lem:devr} and \ref{lem:henze}, 
we also need the following lemma, which is a ``covariance'' version of Lemmas~\ref{lem:devr} and \ref{lem:henze}. 

\begin{lemma}\label{lem:tech6}
Let $[\mW_i]_{i=1}^n=[(\mX_i,\mZ_i)]_{i=1}^n$ be a sample comprised of $n$ independent copies of $\mW=(\mX,\mZ)$, with $\mX\in\R^p$ and $\mZ\in\R^q$. 
Let $\cG_n$ be the directed nearest neighbor graph (NNG) of the vertices $[\mW_i]_{i=1}^n$,
and let $\cG^{\mX}_n$ be the directed nearest neighbor graph (NNG) of the vertices $[\mX_i]_{i=1}^n$. 
If random vector $\mW$ is absolutely continuous with a Lebesgue density $f$ that is continuous, 
then \\
\begin{align}
&\E\Big(n^{-1}
\sum_{\substack{(i,j)~\text{distinct} \\ i\to j\in\cE(\cG_n)\cap \cE(\cG^{\mX}_n)}}1\Big)
\to 0,\label{eq:kr}\\
&\E\Big(n^{-1}
\sum_{\substack{(i,j)~\text{distinct} \\ i\to j\in \cE(\cG_n), j\to i\in \cE(\cG^{\mX}_n)}}1\Big)
\to 0,\label{eq:ks}\\
\text{and}~~~~
&\E\Big(n^{-1}
\sum_{\substack{(i,j,k)~\text{distinct} \\ i\to k\in \cE(\cG_n), j\to k\in \cE(\cG^{\mX}_n)}}1\Big)
\to 1.\label{eq:kt}
\end{align}
\end{lemma}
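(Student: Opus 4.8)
I would adapt the local-rescaling/integral-representation method of \citet{MR937563} and \citet{MR914597} (the one behind Lemmas~\ref{lem:devr} and~\ref{lem:henze}), applying it to the two nearest neighbor graphs $\cG_n$ (on the $\mW_i=(\mX_i,\mZ_i)$) and $\cG^{\mX}_n$ (on the $\mX_i$) \emph{simultaneously}. The feature driving all three limits is the \emph{separation of scales}: $\mX$-nearest-neighbor distances are of order $n^{-1/p}$ whereas $(\mX,\mZ)$-nearest-neighbor distances are of the larger order $n^{-1/(p+q)}$, and since $q\ge1$ one has $n^{-1/p}=o(n^{-1/(p+q)})$. Throughout I use continuity of the density $f$ to freeze it at the reference point in the local limit (handling the domination exactly as in those two references), the fact that every vertex of a nearest neighbor graph has out-degree $1$ so that expected in-degrees equal $1$ for every $n$, and the uniform bounds $\kC_{p+q}$ and $\kC_p$ on in-degrees from Lemma~\ref{lem:MD}.

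For \eqref{eq:kr} and \eqref{eq:ks}, exchangeability rewrites the left-hand side as $(n-1)\Pr(E_n)$, with $E_n=\{1\to2\in\cE(\cG_n)\cap\cE(\cG^{\mX}_n)\}$ and $E_n=\{1\to2\in\cE(\cG_n),\,2\to1\in\cE(\cG^{\mX}_n)\}$, respectively. Conditioning on $(\mW_1,\mW_2)$ and writing $r:=\lVert\mW_1-\mW_2\rVert$, $s:=\lVert\mX_1-\mX_2\rVert\le r$, the ``no further point is closer'' constraints factor into $n-2$ i.i.d.\ survival factors bounded by $\exp(-c\,n\,r^{p+q})$ (from the $\cG_n$ requirement) and by $\exp(-c\,n\,s^{p})$ (from the $\cG^{\mX}_n$ requirement); bounding their product by the geometric mean and changing variables to the offset $\mW_2-\mW_1=(\Delta\mx,\Delta\mz)$, the $\Delta\mx$-integral is $O(n^{-1})$ while the $\Delta\mz$-integral is $O(n^{-q/(p+q)})$ (because $r\ge\lVert\Delta\mz\rVert$), uniformly in $\mW_1$. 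Hence $\Pr(E_n)=O(n^{-1-q/(p+q)})$ and both left-hand sides are $O(n^{-q/(p+q)})\to0$; for \eqref{eq:ks} one uses in addition that ``$\mX_1$ is the $\mX$-nearest neighbor of $\mX_2$'' forces $s$ to be atypically small, which is precisely what yields the extra $n^{-q/(p+q)}$.

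For \eqref{eq:kt}, write $d^-_j(\cG)$ for the in-degree of $j$ in a directed graph $\cG$. For each $k$, the number of ordered pairs $(i,j)$ with $\{i,j,k\}$ distinct, $i\to k\in\cE(\cG_n)$ and $j\to k\in\cE(\cG^{\mX}_n)$ equals $d^-_k(\cG_n)\,d^-_k(\cG^{\mX}_n)-\#\{i:i\to k\in\cE(\cG_n)\cap\cE(\cG^{\mX}_n)\}$; summing over $k$, dividing by $n$ and taking expectations, the subtracted term vanishes in the limit by the already-proved \eqref{eq:kr}, so \eqref{eq:kt} is equivalent to $\E\{d^-_1(\cG_n)\,d^-_1(\cG^{\mX}_n)\}\to1$. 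As $\E\{d^-_1(\cG_n)\}=\E\{d^-_1(\cG^{\mX}_n)\}=1$ for all $n$ and both in-degrees are bounded (hence uniformly integrable), it suffices to prove that $d^-_1(\cG_n)$ and $d^-_1(\cG^{\mX}_n)$ are asymptotically independent. In the integral representation of $\E\{d^-_1(\cG_n)\,d^-_1(\cG^{\mX}_n)\}$ obtained by conditioning on $\mW_1$, on the (rescaled by $n^{-1/(p+q)}$) positions of the $\cG_n$-in-neighbors of $\mW_1$ and on the (rescaled by $n^{-1/p}$) positions of the $\cG^{\mX}_n$-in-neighbors of $\mX_1$, the shared ``no background point violates'' product factors asymptotically into a $\cG_n$-survival factor times a $\cG^{\mX}_n$-survival factor, because the two exclusion balls (joint radius of order $n^{-1/(p+q)}$, $\mX$-radius of order $n^{-1/p}$) overlap with probability $O(n^{-1-q/(p+q)})=o(n^{-1})$ per background point; moreover the ``cross constraints'' — e.g.\ that a $\cG^{\mX}_n$-in-neighbor of $\mX_1$ is not a joint-nearest neighbor of $\mW_1$, which holds because its $\mZ$-coordinate is an independent draw at distance of order $1$ from $\mZ_1$ — hold with probability tending to $1$. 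Dominated convergence then factorizes the iterated integral, and since the resulting local expected-in-degree factors are a.e.\ constant by scale invariance and integrate against the respective marginal densities to $\lim_n\E\{d^-_1(\cG_n)\}=1$ and $\lim_n\E\{d^-_1(\cG^{\mX}_n)\}=1$, the product equals $1$.

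I expect the main obstacle to be the rigorous decoupling in \eqref{eq:kt}: one must carry two local-limit rescalings, at the incompatible orders $n^{-1/(p+q)}$ and $n^{-1/p}$, through a single dominated-convergence argument and show that the coupling between $\cG_n$ and $\cG^{\mX}_n$ — via the $n-3$ shared background points and via constraints involving the designated points of the other graph — contributes only lower-order terms. The recurring mechanism ``imposing a $\cG^{\mX}_n$-scale constraint on a $\cG_n$-scale object (or conversely) costs a factor $n^{-q/(p+q)}$'', already exploited for \eqref{eq:kr} and \eqref{eq:ks}, is what makes these cross terms negligible; combining it with the uniform domination needed to interchange limit and integral, as in the proofs of \citet[Theorem~1.4]{MR914597} and \citet[Theorem~2]{MR937563}, is the technical heart of the argument.
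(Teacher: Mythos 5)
Your high-level plan --- exploit the separation of scales $n^{-1/p}$ vs.\ $n^{-1/(p+q)}$, reduce \eqref{eq:kt} to $\E\{d^-_1(\cG_n)\,d^-_1(\cG^{\mX}_n)\}\to1$ and then to asymptotic independence of the two in-degrees, and carry the local-limit machinery of \citet{MR937563} and \citet{MR914597} across both NNGs simultaneously --- is exactly the paper's strategy, but the two intermediate decompositions genuinely differ and one hides a technical point worth flagging. For \eqref{eq:kr}/\eqref{eq:ks}, your geometric-mean bound plus separate rescalings of $\Delta\mx$ and $\Delta\mz$ is slicker than what the paper does, but the survival factor $\exp(-cns^p)$ for the $\cG^{\mX}_n$-constraint presupposes a lower bound on $\Pr(\lVert\mX-\mx_1\rVert<s)$, i.e.\ on the $\mX$-\emph{marginal} density near $\mx_1$, which continuity of the \emph{joint} density $f$ alone does not directly supply; one must manufacture it via $q_{\mX}(\mx_1)\gtrsim f(\mw_1)\cdot(\text{fixed }\mz\text{-volume on }G_\epsilon)$ and then check it balances against the $f(\mw_1)f(\mw_2)$ weight. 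The paper instead identifies the exact exclusion region $S^*(\mw_1,\mw_2)$ (a $(p+q)$-ball at $\mw_1$ unioned with the $\mX$-cylinder), replaces the unbounded cylinder by a \emph{truncated} cylinder $S(\mw_1,\mw_2;\Theta_n)$ whose $\P_{\mW}$-measure is $\gtrsim f(\mw_1)\,\Omega V_pV_q\,r_{12}^{p+q}$ and hence controlled by the joint density alone, and runs a three-regime case analysis in $r_{12}^{\mx}/r_{12}$ and $r_{12}$; this is precisely what makes the argument work under the stated hypothesis. For \eqref{eq:kt}, the paper does not factorize a conditional integral representation of $\E\{d^-_1 d^{\mX-}_1\}$; it establishes the joint limiting law $\P(d^-_0=k,\,d^{\mX-}_0=\ell)\to\kp_{p+q,k}\,\kp_{p,\ell}$ via Jordan's inclusion--exclusion formula generalized to \emph{two} systems of events $\{A_i\}$ and $\{B_j\}$, showing that for disjoint index sets $n^{r+s}\P(A_1\cdots A_r B_{r+1}\cdots B_{r+s})\to\kd_{p+q,r}\,\kd_{p,s}$ while terms with shared indices are $o(n^{-(r+s-t)})$ --- the latter being exactly the $o(n^{-1})$-per-point overlap mechanism you describe. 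The Jordan-formula bookkeeping is what renders carrying two incompatible local rescalings through one dominated-convergence step tractable, and it is the nontrivial extension of \citet{MR914597} that your sketch identifies as the ``technical heart'' but does not yet supply.
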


Adding \eqref{eq:Gamma1-no} and \eqref{eq:Gamma2-no} together, we obtain
\begin{align*}
\Var(S_n)
&=\E\Big(2n^{-1}\sum_{i=1}^{n}\gamma_{1;a}\Big)
-\E\Big(2\sum_{\substack{(i,j)~\text{distinct} \\ i\to j\in\cE(\cG_n)\cap \cE(\cG^{\mX}_n)}}\gamma_{1;a}\Big)\\
&\qquad-\E\Big(2n^{-1}\sum_{\substack{(i,j,k)~\text{distinct} \\ i\to j\in\cE(\cG_n), i\to k\in\cE(\cG^{\mX}_n)}}\gamma_{2;a}\Big)
+\E\Big\{n^{-1}\sum_{i=1}^{n}\gamma_{5;a,b}^*(\mX_i,\mZ_i)\Big\}\\
&\qquad+\E\Big(n^{-1}
\sum_{\substack{(i,j)~\text{distinct} \\ i\to j, j\to i\in \cE(\cG_n)}}
\gamma_{1;a}\Big)
+\E\Big(n^{-1}\sum_{\substack{(i,j,k)~\text{distinct} \\ i\to k, j\to k\in \cE(\cG_n)\\
\text{or}~i\to j, j\to k\in \cE(\cG_n)\\
\text{or}~i\to k, j\to i\in \cE(\cG_n)}}
\gamma_{2;a}\Big)\\
&\qquad+\E\Big(n^{-1}\sum_{\substack{(i,j)~\text{distinct} \\ i\to j, j\to i\in \cE(\cG^{\mX}_n)}}
\gamma_{1;a}\Big)
+\E\Big(n^{-1}\sum_{\substack{(i,j,k)~\text{distinct} \\ i\to k, j\to k\in \cE(\cG^{\mX}_n)\\
\text{or}~i\to j, j\to k\in \cE(\cG^{\mX}_n)\\
\text{or}~i\to k, j\to i\in \cE(\cG^{\mX}_n)}}
\gamma_{2;a}\Big)\\
&\qquad-\E\Big(2n^{-1}\sum_{\substack{(i,j)~\text{distinct} \\ i\to j\in \cE(\cG_n), j\to i\in \cE(\cG^{\mX}_n)}}
\gamma_{1;a}\Big)
-\E\Big(2n^{-1}\sum_{\substack{(i,j,k)~\text{distinct} \\ i\to k\in \cE(\cG_n), j\to k\in \cE(\cG^{\mX}_n)\\
\text{or}~i\to j\in \cE(\cG_n), j\to k\in \cE(\cG^{\mX}_n)\\
\text{or}~i\to k\in \cE(\cG_n), j\to i\in \cE(\cG^{\mX}_n)}}
\gamma_{2;a}\Big)\\
&\qquad+\E\Big\{2n^{-1}\sum_{\substack{(i,j)~\text{distinct} \\ j\to i\in \cE(\cG_n)}}
\gamma_{4;a,b}^*(\mX_i,\mZ_i)\Big\}
-\E\Big\{2n^{-1}\sum_{\substack{(i,j)~\text{distinct} \\ j\to i\in \cE(\cG^{\mX}_n)}}
\gamma_{4;a,b}^*(\mX_i,\mZ_i)\Big\}.
\yestag\label{eq:Gamma-no}
\end{align*}
The first term is $2\gamma_{1;a}$. 
The second term tends to $0$ by Equation \eqref{eq:kr} in Lemma~\ref{lem:tech6}. 
For the third term, we have
\begin{align*}
\E\Big(2n^{-1}\sum_{\substack{(i,j,k)~\text{distinct} \\ i\to j\in\cE(\cG_n), i\to k\in\cE(\cG^{\mX}_n)}}\gamma_{2;a}\Big)
&=2\gamma_{2;a}\E\Big(n^{-1}\sum_{\substack{(i,j)~\text{distinct} \\ i\to j\in\cE(\cG_n), i\to j\not\in\cE(\cG^{\mX}_n)}}1\Big)\\
&=2\gamma_{2;a}\E\Big(1-n^{-1}\sum_{\substack{(i,j)~\text{distinct} \\ i\to j\in\cE(\cG_n), i\to j\in\cE(\cG^{\mX}_n)}}1\Big)
\to2\gamma_{2;a},
\end{align*}
where the last step is by Equation \eqref{eq:kr}.
The fourth term is $\gamma_{5;a}$. 
The fifth term tends to $\gamma_{1;a}\kq_{p+q}$ by Lemma~\ref{lem:devr}. 
The sixth term can be rewritten as
\begin{align*}
&\gamma_{2;a}
\E\Big(n^{-1}\sum_{\substack{(i,j,k)~\text{distinct} \\ i\to k, j\to k\in \cE(\cG_n)}}1
+n^{-1}\sum_{\substack{(i,j,k)~\text{distinct} \\ i\to j, j\to k\in \cE(\cG_n)}}1
+n^{-1}\sum_{\substack{(i,j,k)~\text{distinct} \\ i\to k, j\to i\in \cE(\cG_n)}}1\Big)\\
&=\gamma_{2;a}
\E\Big(n^{-1}\sum_{\substack{(i,j,k)~\text{distinct} \\ i\to k, j\to k\in \cE(\cG_n)}}1
+n^{-1}\sum_{\substack{(i,j)~\text{distinct} \\ i\to j\in \cE(\cG_n), j\to i\not\in \cE(\cG_n)}}1
+n^{-1}\sum_{\substack{(i,j)~\text{distinct} \\ i\to j\not\in \cE(\cG_n), j\to i\in \cE(\cG_n)}}1\Big)\\
&=\gamma_{2;a}
\E\Big\{n^{-1}\sum_{\substack{(i,j,k)~\text{distinct} \\ i\to k, j\to k\in \cE(\cG_n)}}1
+\Big(1-n^{-1}\sum_{\substack{(i,j)~\text{distinct} \\ i\to j, j\to i\in \cE(\cG_n)}}1\Big)
+\Big(1-n^{-1}\sum_{\substack{(i,j)~\text{distinct} \\ i\to j, j\to i\in \cE(\cG_n)}}1\Big)\Big\}\\
&\to\gamma_{2;a}\Big(\ko_{p+q} +2-2\kq_{p+q}\Big),
\end{align*}
where the last step is by Lemmas~\ref{lem:tech3} and \ref{lem:devr}. 
Similarly, the seventh and eighth terms tend to $\gamma_{1;a}\kq_{p}$
and $\gamma_{2;a}(\ko_{p} +2-2\kq_{p})$, respectively. 
The ninth term tends to $0$ by Equation \eqref{eq:ks} in Lemma~\ref{lem:tech6}. 
The tenth term is equal to
\begin{align*}
&2\gamma_{2;a}
\E\Big(n^{-1}\sum_{\substack{(i,j,k)~\text{distinct} \\ i\to k\in \cE(\cG_n), j\to k\in \cE(\cG^{\mX}_n)}}1
+n^{-1}\sum_{\substack{(i,j,k)~\text{distinct} \\ i\to j\in \cE(\cG_n), j\to k\in \cE(\cG^{\mX}_n)}}1
+n^{-1}\sum_{\substack{(i,j)~\text{distinct} \\ i\to k\in \cE(\cG_n), j\to i\in \cE(\cG^{\mX}_n)}}1\Big)\\
&=2\gamma_{2;a}
\E\Big(n^{-1}\sum_{\substack{(i,j,k)~\text{distinct} \\ i\to k\in \cE(\cG_n), j\to k\in \cE(\cG^{\mX}_n)}}1
+n^{-1}\sum_{\substack{(i,j)~\text{distinct} \\ i\to j\in \cE(\cG_n), j\not\to i\in \cE(\cG^{\mX}_n)}}1
+n^{-1}\sum_{\substack{(i,j)~\text{distinct} \\ i\not\to j\in \cE(\cG_n), j\to i\in \cE(\cG^{\mX}_n)}}1\Big)\\
&=2\gamma_{2;a}
\E\Big\{n^{-1}\sum_{\substack{(i,j,k)~\text{distinct} \\ i\to k\in \cE(\cG_n), j\to k\in \cE(\cG^{\mX}_n)}}1\\
&\qquad+\Big(1-n^{-1}\sum_{\substack{(i,j)~\text{distinct} \\ i\to j\in \cE(\cG_n), j\to i\in \cE(\cG^{\mX}_n)}}1\Big)
+\Big(1-n^{-1}\sum_{\substack{(i,j)~\text{distinct} \\ i\to j\in \cE(\cG_n), j\to i\in \cE(\cG^{\mX}_n)}}1\Big)\Big\}\\
&\to 2\gamma_{2;a}\{1+(1-0)+(1-0)\}=6\gamma_{2;a},
\end{align*}
where the second last step is by Lemma~\ref{lem:tech6}. 
For the last two terms, we obtain
\begin{align*}
\E\Big\{n^{-1}\sum_{\substack{(i,j)~\text{distinct} \\ j\to i\in \cE(\cG_n)}}
\gamma_{4;a,b}^*(\mX_i,\mZ_i)\Big\}
&=\E\Big\{n^{-1}\sum_{i=1}^{n}
\gamma_{4;a,b}^*(\mX_{M(i)},\mZ_{M(i)})\Big\}\\
&=\E\Big\{\gamma_{4;a,b}^*(\mX_{M(1)},\mZ_{M(1)})\Big\}
\to\E\Big\{\gamma_{4;a,b}^*(\mX_{1},\mZ_{1})\Big\}=\gamma_{4;a,b},
\end{align*}
where the second last step can be deduced in view of 
Lemmas~11.5 and 11.7 in \citet{MR4352523} and 
uniform integrability by way of \citet[Chap.~3, Exercise~5.4]{MR3701383},
and similarly
\[
\E\Big\{n^{-1}\sum_{\substack{(i,j)~\text{distinct} \\ j\to i\in \cE(\cG^{\mX}_n)}}
\gamma_{4;a,b}^*(\mX_i,\mZ_i)\Big\}
\to\gamma_{4;a,b}.
\]
Thus the last two terms are canceled out.
Plugging all terms back into \eqref{eq:Gamma-no} yields
\begin{align*}
\Var(S_n)
&\to\gamma_{1;a}\Big\{2+\Big(\kq_{p+q}+\kq_{p}\Big)\Big\}
+\gamma_{2;a}\Big\{-4
-2\Big(\kq_{p+q}+\kq_{p}\Big)
+\Big(\ko_{p+q}+\ko_{p}\Big)
\Big\}+\gamma_{5;b}.
\end{align*}
In addition, 
\[
\gamma_{1;a}=2a^2, ~~~~~~~~~~~~
\gamma_{2;a}=4a^2/5, ~~~~~~~~~~~~
\gamma_{5;b}=b^2 I_0.
\]
Therefore, 
\begin{align*}
\Var(S_n)
\to
a^2\Big\{\frac45
+\frac25\Big(\kq_{p+q}+\kq_{p}\Big)
+\frac45\Big(\ko_{p+q}+\ko_{p}\Big)
\Big\}
+b^2I_0.
\end{align*}
This completes the proof of \eqref{eq:condCLT-no} and thus
\eqref{eq:cramerwold1-no} and \eqref{eq:lecam3-no} with
\[
\sigma^2:=\frac45
+\frac25\Big(\kq_{p+q}+\kq_{p}\Big)
+\frac45\Big(\ko_{p+q}+\ko_{p}\Big)
~~~~\text{and}~~~~
\tau:=0.
\]

Finally, by Le~Cam's third lemma, under $\{\P_{n,\Delta_n}\}_{n\ge1}$,
\[
\sqrt{n}\xi_n\Big(\big[(\mX_i,Y_i,\mZ_i)\big]_{i=1}^{n}\Big)
\stackrel{\sf d}{\longrightarrow}
N(0,\sigma^2).
\]
Moreover, we also have
\[
\sqrt{n}\xi_n\Big(\big[(\mX_i,Y_i^{(b)},\mZ_i)\big]_{i=1}^{n}\Big)
\stackrel{\sf d}{\longrightarrow}
N(0,\sigma^2). 
\]

{\bf Step III.} 
Since $B$ tends to infinity as $n\to\infty$, without loss of generality, we assume $B> \alpha^{-1}-1$. 
With the shorthand notation
\[
\xi_n^{(b)} \equiv \xi_n\Big(\big[(\mX_i,Y_i^{(b)},\mZ_i)\big]_{i=1}^{n}\Big)
~~~~\text{and}~~~~
\xi_n^\circ \equiv \xi_n\Big(\big[(\mX_i,Y_i,\mZ_i)\big]_{i=1}^{n}\Big),
\]
the test
\[
\sT^{\Q,\xi_n}_\alpha:=
\ind\Big(\frac{1+\sum_{b=1}^{B}\ind(\xi_n^{(b)}\ge \xi_n^\circ)}{1+B}\le\alpha\Big)
\]
can be restated as
\[
\sT^{\Q,\xi_n}_\alpha=\ind\Big(\sqrt{n}\xi_n^\circ > \sqrt{n}\xi_n^{[1+B-\lfloor\alpha(1+B)\rfloor]}\Big),
\yestag\label{eq:yexu1}
\]
where $\xi_n^{[1]},\xi_n^{[2]},\dots,\xi_n^{[B]}$ is
a rearrangement of $\xi_n^{(1)},\xi_n^{(2)},\dots,\xi_n^{(B)}$ such that
\[
\xi_n^{[1]} \le \xi_n^{[2]} \le \cdots \le \xi_n^{[B]}.
\]
Write $\Phi_{\sigma}(\cdot)$ and $\Phi_{\sigma}^{-1}(\cdot)$ for the cumulative distribution function and quantile function of the normal distribution with mean zero and variance $\sigma^2$. 
We wish to prove 
\[
\sqrt{n}\xi_n^{[1+B-\lfloor\alpha(1+B)\rfloor]}\stackrel{\sf p}{\longrightarrow}\Phi_{\sigma}^{-1}(1-\alpha).
\yestag\label{eq:yexu2}
\] 
Using Theorem~3.1 in \citet{MR57521}, it suffices to prove
\[
B^{-1}\sum_{b=1}^{B}\ind(\sqrt{n}\xi_n^{(b)}\le y)\stackrel{\sf p}{\longrightarrow}\Phi_{\sigma}(y),
\]
which is immediate from
\[
B^{-1}\sum_{b=1}^{B}\ind(\sqrt{n}\xi_n^{(b)}\le y)\Biggiven\cF_n
\stackrel{\sf p}{\longrightarrow}\Phi_{\sigma}(y)
~~~\text{for almost every sequence $[(\mX_n,\mZ_n)]_{n\ge1}$}. 
\]
We obtain that
\begin{align*}
\lim_{n\to\infty} \P_{H_{1,n}(\Delta_0),\Q}\Big(\sT^{\Q,\xi_n}_\alpha=1\Big)
&=\lim_{n\to\infty} \P_{H_{1,n}(\Delta_0),\Q}\Big(\sqrt{n}\xi_n^\circ > \sqrt{n}\xi_n^{[1+B-\lfloor\alpha(1+B)\rfloor]}\Big)\\
&=\lim_{n\to\infty} \P_{H_{1,n}(\Delta_0),\Q}\Big(\sqrt{n}\xi_n^\circ > \Phi_{\sigma}^{-1}(1-\alpha)\Big)=\alpha.
\end{align*}
This completes the proof. 
\end{proof}

\subsection{Proof of Theorem~\ref{thm:powerless}\ref{thm:pl3}}

\begin{proof}[Proof of Theorem~\ref{thm:powerless}\ref{thm:pl3}]
Given that $Y$ is independent of $\mX$, the conditional independence between $Y$ and $\mZ$ given $\mX$ is equivalent to the (unconditional) independence between $Y$ and $\mW=(\mX,\mZ)$. 

To test the independence between $Y\in\R^1$ and $\mW\in\R^{p+q}$, we will adopt the test proposed in \citet[Equation~(13)]{MR4399094}; see \citet{deb2019multivariate} for a similar result. We will briefly illustrate the idea. 

%

Let $(Y_1,\mW_1),\dots,(Y_n,\mW_n)$ be independent copies of $(Y,\mW)$. 
Let $\fF_{Y,\pm}^{(n)}$ and $\fF_{\mW,\pm}^{(n)}$ be the empirical center-outward distribution functions as defined in \citet[Definition 2.3]{MR4255122}
for $\{Y_i\}_{i=1}^{n}$ and $\{\mW_i\}_{i=1}^{n}$, respectively.
We define the test statistic
\[
\widehat M_n:=n\cdot \dCov^2_n\Big([\fF_{\mX,\pm}^{(n)}(\mX_i)]_{i=1}^{n},[\fF_{\mY,\pm}^{(n)}(\mY_i)]_{i=1}^{n}\Big),
\]
where the (sampled) distance covariance $\dCov^2_n(\cdot,\cdot)$ is given in \citet[Definition~1]{MR3055745}, 
and then form the test
\[
\mathsf{T}_{\alpha}^{\rm opt}:= \ind\Big(\widehat M_n>q_{1-\alpha}\Big),
~~~~q_{1-\alpha}:=\inf\Big\{x\in\R:\P\Big(\sum_{k=1}^{\infty}\lambda_k(\xi_k^2-1)\le x\Big)\ge 1-\alpha \Big\}.
\]
Here, $\lambda_k$, $k\in\Z_+$, are the non-zero eigenvalues of the integral equation
given by \citet[Equation~(12)]{MR4399094} and depend only on
$p+q$, and 
$[\xi_k]_{k=1}^{\infty}$ is a sequence of independent standard
Gaussian random variables.  Further details can be found in \citet{MR4399094}.

By Theorem 3.1 in \citet{MR4399094},
\[
\lim_{n\to\infty} \P_{H_0}(\sT_{\alpha}^{\rm opt}=1)\le\alpha,
\]
and by Theorem 5.3 in \citet{MR4474478}, for sufficiently large $\Delta_0$,
\[
\lim_{n\to\infty} \P_{H_{1,n}(\Delta_0)}(\sT_{\alpha}^{\rm opt}=1)\ge 1-\beta.
\]

Finally,  we prove  that
\[
\lim_{\Delta_0\to 0}\lim_{n\to\infty}\TV(H_0, H_{1,n}(\Delta_0))=0.
\]
Equation~(2.20) in \citet{MR2724359} states that total variation and Hellinger distances satisfy 
\[
\TV(H_{1,n}(\Delta_0), H_0)\le\HL(H_{1,n}(\Delta_0), H_0).
\] 
It is also known \citep[p.~83]{MR2724359}  that
\[
1-\frac{\HL^2(H_{1,n}(\Delta_0), H_0)}{2}=\Big(1-\frac{\HL^2(\P_{1,n}(\Delta_0), \P_0)}{2}\Big)^n.
\]
\citet[Example~13.1.1]{MR2135927} show that, under Assumption~\ref{asp:only}, 
\[
n\times \HL^2(\P_{1,n}(\Delta_0), \P_0))
\to \frac{\Delta_0^2\cI_{\mX}(0)}{4};
\]
notice that here the definition of $\HL^2(\Q,\P)$ differs from that in \citet[Definition~13.1.3]{MR2135927} by a factor of $2$. 
Therefore, 
\[
\frac{\HL^2(H_{1,n}(\Delta_0), H_0)}{2} \longrightarrow 1-\exp\Big\{-\frac{\Delta_0^2\cI_{\mX}(0)}{8}\Big\}.
\]
where the right-hand side tends to $0$ as $\Delta_0\to0$. 
The last assertion is a direct corollary of the fact that the sum of probabilities of Type I error and Type II error has the following lower bound:
\[
\inf_{\sT} \Big\{P_{H_0}(\sT=1) + \P_{H_{1,n}(\Delta_0)}(\sT=0)\Big\} = 1-\TV(H_{1,n}(\Delta_0), H_0)
\]
\citep[Theorem~13.1.1]{MR2135927}. 
This completes the proof.
\end{proof}

\section*{Acknowledgments}
The authors would like to thank two anonymous referees, an anonymous 
Associate Editor, and the Editor Mark Podolskij for their stimulating comments, 
which highly improved the quality of this paper.

\section*{Funding}
The authors have received funding from the United States NSF Grants
DMS-1712536 and SES-2019363 and the European Research Council (ERC) under the European Union's Horizon 2020 research and innovation programme (grant agreement No 883818).

\begin{appendix}

\section{Proof}\label{sec:proof}

\noindent{\bf Additional notation.}
For a function $f:\cX\to \R$, we define $\norm{f}_{\infty}:= \max_{x\in\cX}|f(x)|$. 
We use 
$\stackrel{\sf d}{\longrightarrow}$, 
$\stackrel{\sf p}{\longrightarrow}$, and 
$\stackrel{\sf a.s.}{\longrightarrow}$ to 
denote convergence in distribution, convergence in probability, 
and almost sure convergence, respectively.  
For any two real sequences $[a_n]_n$ and $[b_n]_n$, we write $a_n=O(b_n)$ if there exists $C>0$ such that $|a_n|\le C|b_n|$ for all $n$ large enough, and $a_n=o(b_n)$ if for any $c>0$, $|a_n|\le c|b_n|$ holds for all $n$ large enough. 
For a sequence of random variables $[X_n]_n$ and a real sequence $[a_n]_n$, we write $X_n=O_{\P}(a_n)$ if for any $\epsilon>0$ there exists $C>0$ such that $\P(|X_n|\ge C|a_n|)<\epsilon$ for all $n$ large enough, and $X_n=o_{\P}(a_n)$ if for any $c>0$, $\lim_{n\to\infty}\P(|X_n|\ge c|a_n|)=0$.

\subsection{Proofs for Section~\ref{sec:prelim}}

\subsubsection{Proof of Proposition~\ref{prop:vacon}}

\begin{proof}[Proof of Proposition~\ref{prop:vacon}]

We will use the shorthand notation
\[
\xi_n^{(b)} \equiv \xi_n\Big(\big[(\mX_i,Y_i^{(b)},\mZ_i)\big]_{i=1}^{n}\Big),
~~~~
\xi_n^\circ \equiv \xi_n\Big(\big[(\mX_i,Y_i,\mZ_i)\big]_{i=1}^{n}\Big),
~~~~\text{and}~~~~
\Xi_n^{(b)}:=\ind\Big(\xi_n^{(b)}\ge \xi_n^\circ\Big),
\]
and we have
\[
p_{\rm CRT}=(1+B)^{-1}+(1+B)^{-1}\sum_{b=1}^{B}\Xi_n^{(b)}. 
\yestag\label{eq:pprime}
\]

Claim \ref{prop:vacon-1} is a  corollary of \citet[Lemma~F.1]{MR3798878supp}. 
Notice that
\[
p_{\rm CRT}\ge (1+B)^{-1}+(1+B)^{-1}\sum_{b=1}^{B}\ind^{\rm rtb}(\xi_n^{(b)},\xi_n^\circ;U_b)=:p_{\rm CRT}^{\rm rtb},
\]
where $U_1,\dots,U_B$ are independent Bernoulli random variables of equal probabilities to be $0$ or $1$, and
\[
\ind^{\rm rtb}(x,y;u):=\begin{cases}\ind(x\ge y), & \text{if } x\ne y,\\
u, & \text{if } x=y.\end{cases}
\]
Under the null hypothesis and conditionally on 
$\big[(\mX_i,\mZ_i)\big]_{i=1}^{n}$, we have
\[
\xi_n^{(1)},\dots,\xi_n^{(B)},\xi_n^\circ
\]
are independent and identically distributed, 
and accordingly $p_{\rm CRT}^{\rm rtb}$ is discretely uniformly distributed over
\[
\Big\{\frac{1}{1+B},\frac{2}{1+B},\dots,\frac{1+B}{1+B}\Big\}.
\]
As a consequence,
\begin{align*}
\P_{H_0}\Big(\sT^{\Q,\xi_n}_\alpha=1\Biggiven \big[(\mX_i,\mZ_i)\big]_{i=1}^{n}\Big)
&=\P_{H_0}\Big(p_{\rm CRT}\le\alpha\Biggiven \big[(\mX_i,\mZ_i)\big]_{i=1}^{n}\Big)\\
&\le\P_{H_0}\Big(p_{\rm CRT}^{\rm rtb}\le\alpha\Biggiven \big[(\mX_i,\mZ_i)\big]_{i=1}^{n}\Big)
=\frac{\lfloor \alpha(1+B)\rfloor}{1+B}
\le\alpha. 
\end{align*}
Since this inequality holds conditionally, it also holds unconditionally.

It remains to prove Claim \ref{prop:vacon-2}, the consistency of $\sT^{\Q,\xi_n}_\alpha$. 
Since by Propositions~\ref{prop:AC1} and \ref{prop:AC2}, under the fixed alternative $H_1$, 
\[
\xi_n^{(b)}\stackrel{\sf a.s.}{\longrightarrow} 0
~~~~\text{and}~~~~
\xi_n^\circ\stackrel{\sf a.s.}{\longrightarrow} \xi_{H_1}
\]
where $0<\xi_{H_1}\le 1$, we obtain 
\[
\Xi_n^{(b)}=\ind\Big\{\xi_n^{(b)}\ge 
          \xi_n^\circ\Big\}
\stackrel{\sf a.s.}{\longrightarrow}0.
\yestag\label{eq:eachb}
\]
Recall that $B=B_n$ tends to infinity as $n\to\infty$. 
Since random variables $\Xi_n^{(1)},\Xi_n^{(2)},\dots,\Xi_n^{(B_n)}$ are exchangeable, 
applying Lemma~1.1 in \citet{MR790492} yields 
\[
B_n^{-1}\sum_{b=1}^{B_n}\Xi_n^{(b)}
=\E\Big(\Xi_n^{(1)}\Biggiven \cG_n\Big)~~~~\text{a.s.},
\yestag\label{eq:patterson}
\]
where $\cG_n$ is the $\sigma$-field generated as
\[
\cG_n:=\sigma\Big(\sum_{b=1}^{B_n}\Xi_n^{(b)},
                  \sum_{b=1}^{B_{n+1}}\Xi_{n+1}^{(b)},
                  \dots\Big).
\]
Notice that 
 (i) $[\cG_n]_{n=1}^{\infty}$ is a decreasing sequence of $\sigma$-fields
with $\cG_n\to\cG_{\infty}$ where $\cG_{\infty}:=\bigcap_{n=1}^{\infty}\cG_n$,
(ii) $0\le\Xi_n^{(1)}\le1$,
and (iii) $\Xi_n^{(1)}\stackrel{\sf a.s.}{\longrightarrow} 0$.  
Using Lemma~2(c) in \citet{MR548906}, 
we obtain
\[
\E\Big(\Xi_n^{(1)}\Biggiven \cG_n\Big)
\stackrel{\sf a.s.}{\longrightarrow} 
\E\Big(0\Biggiven \cG_{\infty}\Big)=0.
\yestag\label{eq:isaac}
\]
Combining \eqref{eq:patterson} and \eqref{eq:isaac}, we deduce
\[
B_n^{-1}\sum_{b=1}^{B_n}\Xi_n^{(b)}
\stackrel{\sf a.s.}{\longrightarrow} 0,
\]
and moreover, in \eqref{eq:pprime} that
\[
p_{\rm CRT}\stackrel{\sf a.s.}{\longrightarrow} 0,
\]
Therefore, 
\[
\lim_{n\to\infty} \P_{H_1,\Q}(\sT^{\Q,\xi_n}_\alpha=1)
=\lim_{n\to\infty} \P_{H_1,\Q}(p_{\rm CRT}\le \alpha)
=1,
\]
and the proof is complete.
\end{proof}

\subsection{Proofs for Section~\ref{sec:asynormal}}

\subsubsection{Proof of Theorem~\ref{thm:byprod}}

\begin{proof}[Proof of Theorem~\ref{thm:byprod}]

Claim~\ref{thm:byprod1} can be proved in view of the proof of Theorem~\ref{thm:powerless}\ref{thm:pl1}. 

We next give a proof of Claim~\ref{thm:byprod2}. 
When $Y$ and $\mZ$ are both absolutely continuous, we have
\[
\xi^\#_n
=\frac{n\sum_{i=1}^{n}\min(R_i,R_{M(i)}) -\sum_{i=1}^{n}i^2}
{\sum_{i=1}^{n}i(n-i)}
=\frac{n^{-2}\sum_{i=1}^{n}\min(R_i,R_{M(i)}) -(1+n^{-1})(2+n^{-1})/6}
{(1-n^{-2})/6}.
\]
Moreover, in view of Equation \eqref{eq:hajek1} in the Proof of Lemma~\ref{lem:tech4}, we have under the null, 
\[\sqrt{n}\xi^\#_n-\sqrt{n}\xi^\dagger_n=o_{\P}(1),\]
where
\[
\xi^\dagger_n
:=\frac{n^{-1}\sum_{i=1}^{n}\min\{F_{Y}(Y_i),F_{Y}(Y_{M(i)})\} -
\{(n(n-1)\}^{-1}\sum_{i\ne j}\min\{F_{Y}(Y_i),F_{Y}(Y_j)\}}
{1/6}.
\]
In view of the proof of Theorem~\ref{thm:powerless}\ref{thm:pl1}, to establish the central limit theorem for $\sqrt{n}\xi^\dagger_n$ and thus $\sqrt{n}\xi^\#_n$, it suffices to determine the limit of
$
\Var(\sqrt{n}\xi^\dagger_n).
$
Set $K_{\wedge}(y_1,y_2):=\min\{F_{Y}(y_1),F_{Y}(y_2)\}$, 
\[
\gamma_{1}:=\E\Big[\Big\{6K_{\wedge}(Y,Y')-2\Big\}^2\Big]=2~~~~\text{and}~~~~
\gamma_{2}:=\E\Big[\Big\{6K_{\wedge}(Y,Y')-2\Big\}\Big\{6K_{\wedge}(Y,Y'')-2\Big\}\Big]=4/5,
\]
where $Y'$ and $Y''$ are independent copies of $Y$. 
Then
\begin{align*}
\Var(\sqrt{n}\xi^\dagger_n)
&\to\E\Big(n^{-1}\sum_{i=1}^{n}\gamma_{1}\Big)
+\E\Big(n^{-1}
\sum_{\substack{(i,j)~\text{distinct} \\ i\to j, j\to i\in \cE(\cG_n)}}
\gamma_{1}\Big)
+\E\Big(n^{-1}\sum_{\substack{(i,j,k)~\text{distinct} \\ i\to k, j\to k\in \cE(\cG_n)\\
\text{or}~i\to j, j\to k\in \cE(\cG_n)\\
\text{or}~i\to k, j\to i\in \cE(\cG_n)}}
\gamma_{2}\Big)-4\gamma_{2}\\
&\to\gamma_{1}+\gamma_{1}\kq_{q}+\gamma_{2}\Big(\ko_{q}+2-2\kq_{q}\Big)-4\gamma_{2}
 =\frac25+\frac25\kq_{q}+\frac45\ko_{q}.
\end{align*}
This completes the proof. 
\end{proof}

\subsubsection{Proof of Lemma~\ref{lem:tech3}}

\begin{proof}[Proof of Lemma~\ref{lem:tech3}]
Using Lemmas~\ref{lem:MD} and \ref{lem:henze} yields
\[
\E\Big(n^{-1}\sum_{\substack{(i,j,k)~\text{distinct} \\ i\to k, j\to k\in \cE(\cG_n)}}1\Big)
 =\E\Big\{n^{-1}\sum_{k=1}^{n}d_k^{-}(d_k^{-}-1)\Big\}
 =\E\{d_1^{-}(d_1^{-}-1)\}
 =\Var(d_1^{-})
 \to\kd_{d;2},
\]
where $d^{-}_k := \sum_{i:i\to k\in \cE(\cG_n)} 1$ is the in-degree of vertex $(\mX_k,\mZ_k)$ in $\cG_n$. 
\end{proof}


%
%
%
%
%
%
%
%
%
%
%
%
%
%
%
%
%
%
%
%
%
%
%
%
%
%

\subsection{Proofs for Section~\ref{sec:subopt2}}

\subsubsection{Proof of Proposition~\ref{prop:ACHolder}}

\begin{proof}[Proof of Proposition~\ref{prop:ACHolder}]
We first prove $\xi_n -\xi =O_{\P}\big({(\log n)^{p+q+1}}\big/{n^{s/(p+q)}}\big)$. 
Following the Proof of Theorem~4.1 in \citet[Sec.~14]{MR4352523}, we first generalize Lemma~14.1 in \citet[Sec.~14]{MR4352523}, i.e., show that there is some $C_3$ depending only on $C_1$, $C_2$, $p$ and $s$ such that
\[
\E\{\min(\lVert\mX_1-\mX_{N(1)}\rVert_{}^s,1)\}
\le \begin{cases} C_3 n^{-1}(\log n)^2 & \text{if}~p = 1~\text{and}~s=1, \\ 
                  C_3 n^{-s/p}\log n   & \text{otherwise}. \end{cases}
\yestag\label{eq:boundps}
\]
In view of the Proof of Lemma~14.1 in \citet[Sec.~14]{MR4352523}, we get
\[
\P(\lVert\mX_1-\mX_{N(1)}\rVert_{}\ge \epsilon)
\le (1-\delta)^{n-1}+C_pC_2^p\epsilon^{-p}\delta 
\]
and, by taking $\delta=n^{-1}\log n$, find that
\[
\P(\lVert\mX_1-\mX_{N(1)}\rVert_{}\ge \epsilon)
\le \frac{C_4\log n}{n\epsilon^{p}}
\]
for some $C_4$ depending only on $C_1$, $C_2$ and $p$. Thus,
\begin{align*}
&\E\{\min(\lVert\mX_1-\mX_{N(1)}\rVert_{}^s,1)\}\\
&=\int_{0}^{n^{-s/p}}\P(\lVert\mX_1-\mX_{N(1)}\rVert_{}^s\ge \epsilon)\d \epsilon
 +\int_{n^{-s/p}}^{1}\P(\lVert\mX_1-\mX_{N(1)}\rVert_{}^s\ge \epsilon)\d \epsilon\\
&\le \int_{0}^{n^{-s/p}} 1\d \epsilon
    +\int_{n^{-s/p}}^{1} \frac{C_4\log n}{n\epsilon^{p/s}}\d \epsilon
 =n^{-s/p}+\frac{C_4\log n}{n}\int_{n^{-s/p}}^{1} \epsilon^{-p/s}\d \epsilon.
\end{align*}
This completes the proof of \eqref{eq:boundps}.
Next in view of the Proof of Lemma~14.2 and Theorem~4.1 in \citet[Sec.~14]{MR4352523}, the desired result follows. 
\end{proof}

\subsubsection{Proof of Lemma~\ref{lem:condmore}}

\begin{proof}[Proof of Lemma~\ref{lem:condmore}]
We first prove Claim~\ref{lem:condmore1}. 
The conditional independence hypothesis $Y \indep Z \given \mX$ 
implies that $q_{Y \given \mX,Z}(y \given \mx,z)=q_{Y \given \mX}(y \given \mx)$. The rest is obvious. 
Then we prove Claim~\ref{lem:condmore2}. 
Since
\[
\Big\lvert q_{Y, Z \given \mX}(y,z\given \mx) - 
q_{Y, Z \given \mX}(y',z'\given \mx)\Big\rvert
\le L\Big(\lvert y-y'\rvert^s+\lvert z-z'\rvert^s\Big),
\]
it holds that
\begin{align*}
\Big\lvert q_{Z \given \mX}(z\given \mx) - 
q_{Z \given \mX}(z'\given \mx)\Big\rvert
&= \Big\lvert \int_{[0,1]}q_{Y, Z \given \mX}(y,z\given \mx) \d y - 
\int_{[0,1]}q_{Y, Z \given \mX}(y,z'\given \mx) \d y \Big\rvert\\
&\le \int_{[0,1]}  \Big\lvert q_{Y, Z \given \mX}(y,z\given \mx) - 
q_{Y, Z \given \mX}(y,z'\given \mx)\Big\rvert \d y
 \le L \lvert z-z'\rvert^s. 
\end{align*}
Next, since
\[
\Big\lvert
q_{Y,Z \given \mX}(y,z \given \mx) - 
q_{Y,Z \given \mX}(y,z \given \mx')
\Big\rvert
\le L\lVert \mx - \mx'\rVert_{}^s,
\]
we have
\begin{align*}
\Big\lvert q_{Z \given \mX}(z\given \mx) - 
q_{Z \given \mX}(z\given \mx')\Big\rvert
&= \Big\lvert \int_{[0,1]}q_{Y, Z \given \mX}(y,z\given \mx) \d y - 
\int_{[0,1]}q_{Y, Z \given \mX}(y,z\given \mx')\Big\rvert \d y \\
&\le \int_{[0,1]}  \Big\lvert q_{Y, Z \given \mX}(y,z\given \mx) - 
q_{Y, Z \given \mX}(y,z\given \mx')\Big\rvert \d y
 \le L\lVert \mx - \mx'\rVert_{}^s.
\end{align*}
Also,
$
L^{-1} \le q_{Y, Z \given \mX}(y,z\given \mx) \le L
$
implies 
$
L^{-1} \le q_{Z \given \mX}(z \given \mx) \le L.
$
Then for all $\mx,\mx'\in[0, 1]^p$ and $y,z,z'\in[0, 1]$, we have
\begin{align*}
&\lvert q_{Y \given \mX, Z}(y\given \mx, z) 
 - q_{Y \given \mX, Z}(y\given \mx', z')\rvert\\
&=\Big\lvert \frac{q_{Y, Z \given \mX}(y,z\given \mx)}{q_{Z \given \mX}(z \given \mx)}
 - \frac{q_{Y, Z \given \mX}(y,z'\given \mx')}{q_{Z \given \mX}(z' \given \mx')}\Big\rvert\\
&\le \frac{q_{Y, Z \given \mX}(y,z\given \mx)\, \big\lvert q_{Z \given \mX}(z \given \mx) - q_{Z \given \mX}(z' \given \mx')\big\rvert
+
q_{Z \given \mX}(z \given \mx)\, \big\lvert q_{Y, Z \given \mX}(y,z\given \mx) - q_{Y, Z \given \mX}(y,z'\given \mx')\big\rvert}
{q_{Z \given \mX}(z \given \mx)q_{Z \given \mX}(z' \given \mx')}\\
&\le L^3 \Big( \big\lvert q_{Z \given \mX}(z \given \mx) - q_{Z \given \mX}(z' \given \mx')\big\rvert
+\big\lvert q_{Y, Z \given \mX}(y,z\given \mx) - q_{Y, Z \given \mX}(y,z'\given \mx')\big\rvert \Big) \\
&\le L^3 \Big( \big\lvert q_{Z \given \mX}(z \given \mx) - q_{Z \given \mX}(z' \given \mx)\big\rvert + \big\lvert q_{Z \given \mX}(z' \given \mx) - q_{Z \given \mX}(z' \given \mx')\big\rvert \\
&\qquad+\big\lvert q_{Y, Z \given \mX}(y,z\given \mx) - q_{Y, Z \given \mX}(y,z'\given \mx)\big\rvert + \big\lvert q_{Y, Z \given \mX}(y,z'\given \mx)- q_{Y, Z \given \mX}(y,z'\given \mx')\big\rvert \Big) \\
&\le 2L^4(\lvert z-z'\rvert^s + \lVert \mx - \mx'\rVert_{}^s),
\end{align*}
which concludes the proof.
\end{proof}

\subsubsection{Proof of Corollary~\ref{crl:fruit}}

\begin{proof}[Proof of Corollary~\ref{crl:fruit}]

We will use the shorthand notation
\[
\xi_n^{(b)} \equiv \xi_n\Big(\big[(\mX_i,Y_i^{(b)},\mZ_i)\big]_{i=1}^{n}\Big),
~~~~
\xi_n^\circ \equiv \xi_n\Big(\big[(\mX_i,Y_i,\mZ_i)\big]_{i=1}^{n}\Big),
~~~~\text{and}~~~~
\Xi_n^{(b)}:=\ind\Big(\xi_n^{(b)}\ge \xi_n^\circ\Big).
\]
We have
\[
p_{\rm CRT}=(1+B)^{-1}+(1+B)^{-1}\sum_{b=1}^{B}\Xi_n^{(b)}. 
\yestag\label{eq:pprimefruit}
\]
Notice that $\P_{\mX_i,Y_i^{(b)},\mZ_i}\in\P_{0}(L,s)$
and $\P_{\mX_i,Y_i,\mZ_i}\in\cP_{1}(L,s)$ for $i\in\zahl{n}$. 
Moreover, Lemma~\ref{lem:condmore} 
implies that all distributions in $\cP_{[0,1]^{p+2},\infty}(L,s)$ and $\cQ_{[0,1]^{p+2},\infty}(L,s)$ are such that the assumptions
in Proposition~\ref{prop:ACHolder} are satisfied. 
Since $\xi(\P_{1,n})\gtrsim n^{-s/(p+1)+\delta}$, 
we have
\[
n^{s/(p+1)-\delta/2}\xi_n^{(b)}
\stackrel{\sf p}{\longrightarrow} 0~~~~\text{and}~~~~
n^{s/(p+1)-\delta/2}\xi_n^\circ
\stackrel{\sf p}{\longrightarrow} \infty.
\]
Accordingly, we obtain 
\[
\Xi_n^{(b)}=\ind\Big\{n^{s/(p+1)-\delta/2}\xi_n^{(b)}\ge 
                      n^{s/(p+1)-\delta/2}\xi_n^\circ\Big\}
\stackrel{\sf p}{\longrightarrow}0.
\yestag\label{eq:eachb_local}
\]


Recall that $B=B_n$ tends to infinity as $n\to\infty$. 
Since $\Xi_n^{(1)}\stackrel{\sf p}{\longrightarrow}0$, 
using Theorem~5.7 in \citet[Chap.~3]{MR3701383} yields
that every subsequence $\{n'\}$ contains a further subsequence $\{n''\}$ 
for which $\Xi_{n''}^{(1)}\stackrel{\sf a.s.}{\longrightarrow}0$. 
In view of the Proof of Proposition~\ref{prop:vacon},
we obtain
\[
B_{n''}^{-1}\sum_{b=1}^{B_{n''}}\Xi_{n''}^{(b)}
\stackrel{\sf a.s.}{\longrightarrow} 0,
\]
and then using Theorem~5.7 in \citet[Chap.~3]{MR3701383} once again gives
\[
B_{n}^{-1}\sum_{b=1}^{B_{n}}\Xi_{n}^{(b)}
\stackrel{\sf p}{\longrightarrow} 0.
\]
Moreover, we deduce from \eqref{eq:pprimefruit} that
\[
p_{\rm CRT}\stackrel{\sf p}{\longrightarrow} 0.
\]
Therefore, 
\[
\lim_{n\to\infty} \P_{H_{1,n}}(\sT^{\Q,\xi_n}_\alpha=1)
=\lim_{n\to\infty} \P_{H_{1,n}}(p_{\rm CRT}\le \alpha)
=1,
\]
and we have completed the proof.
\end{proof}

\subsubsection{Proof of Theorem \ref{thm:neybov}}

We revisit the example considered in the Proof of Theorem 4.2 in \citet[Sec.~B]{MR4319245}. 

\begin{example}\label{ex:neybov}
Under the null hypothesis $H_0$ we specify the distribution $\P_0$ with density 
\[
q_{X,Y,Z}(x,y,z) = 1
\yestag\label{eq:nullmodel}
\]
for all $(x,y,z) \in [0,1]^{3}$. 
Under the local alternative hypothesis $H_{1,n}$ we specify the distribution $\P_{1,n}$ with density
\begin{align*}
q_{X,Y,Z}(x,y,z) = 1 + \gamma_{\rho,m'}(y,z)\eta_{\rho,m}(x),
\yestag\label{eq:localmodel}
\end{align*}
where  
\[
\eta_{\rho,m}(x) = \rho \sum_{k \in \zahl{m}} \nu_{k} h_{k, m}(x)
~~~~\text{and}~~~~
\gamma_{\rho,m'}(y,z) =  \rho^2 \sum_{i \in \zahl{m'}}\sum_{j \in \zahl{m'}} \delta_{ij} h_{i, m'}(y)h_{j, m'}(z).
\]
In the definition of these functions $\rho > 0$ is a constant, $m$ and $m'$ are positive integers, 
$\nu_{k},k\in\zahl{m}$ and $\delta_{ij},i,j\in\zahl{m'}$ are i.i.d.~Rademacher random variables (taking values of $1$ and $-1$ with
probability $1/2$ each), 
and 
\[
h_{k, m}(x) = \sqrt{m}\times h(m x - k + 1)~~~~\text{for}~x \in [(k-1)/m, k/m]
\] 
(and is zero elsewhere), 
where $h$ is an infinitely differentiable function supported on $[0,1]$ 
such that 
\[
\int h(x) \d x = 0~~~~\text{and}~~~~\int h^2(x) \d x = 1.
\yestag\label{eq:normalization}
\] 
Let $c_1=\int |h(x)| \d x$ and $c_{\infty}=\max(\lVert h\rVert_{\infty},\lVert h'\rVert_{\infty})$. In addition, we assume that
\[
h(x)=-h(1-x)~~~~\text{and}~~~~
\int_{0}^{1}\int_{0}^{x} h(u) \d u\d x = 0.
\yestag\label{eq:specific}
\]
In order for the joint density \eqref{eq:localmodel} to be bonafide, it suffices to have that $\rho^3 \sqrt{m}^{}\sqrt{(m')^2} c_{\infty}^{3} \le 1$. Furthermore, we require that
\[
\rho^3 \sqrt{m}^{}\sqrt{(m')^2} c_{\infty}^{3} \to 0,~~~~\text{as}~n\to\infty.
\yestag\label{eq:perturbation}
\]
\end{example}

\vspace{5mm}

It is simple to check the following lemma for Example~\ref{ex:neybov}.

\begin{lemma}\label{lem:tech7}
The marginal of $(X,Y)$ is uniformly distributed on $[0, 1]^2$ for both $\P_{X,Y,Z}=\P_0$ and $\P_{X,Y,Z}=\P_{1,n}$. 
\end{lemma}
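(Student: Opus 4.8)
The plan is a short direct computation that exploits the product structure of the perturbation in Example~\ref{ex:neybov} together with the normalization $\int_0^1 h = 0$ recorded in \eqref{eq:normalization}. For $\P_0$ the claim is immediate: the joint density \eqref{eq:nullmodel} is identically $1$ on $[0,1]^3$, so integrating out the $z$-coordinate returns the uniform density on $[0,1]^2$. The only work is therefore in the alternative.

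For $\P_{1,n}$, I would integrate the density \eqref{eq:localmodel} over $z\in[0,1]$. Since $\eta_{\rho,m}(x)$ does not involve $z$, the $(X,Y)$-marginal density is $q_{X,Y}(x,y)=1+\eta_{\rho,m}(x)\int_0^1\gamma_{\rho,m'}(y,z)\,\d z$, so the whole statement reduces to showing $\int_0^1\gamma_{\rho,m'}(y,z)\,\d z=0$ for every $y$. Expanding $\gamma_{\rho,m'}(y,z)=\rho^2\sum_{i,j\in\zahl{m'}}\delta_{ij}h_{i,m'}(y)h_{j,m'}(z)$ and using linearity of the integral, it suffices to check that $\int_0^1 h_{j,m'}(z)\,\d z=0$ for each $j\in\zahl{m'}$. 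The change of variables $u=m'z-j+1$ on the support $[(j-1)/m',j/m']$ turns this integral into $(m')^{-1/2}\int_0^1 h(u)\,\d u$, which vanishes by \eqref{eq:normalization}. Hence $q_{X,Y}\equiv 1$ on $[0,1]^2$, i.e.\ $(X,Y)$ is uniform there.

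I do not expect any genuine obstacle in this lemma; the one point worth flagging is that the identity $\int_0^1 h_{j,m'}=0$ is deterministic, so the conclusion holds for every realization of the Rademacher variables $\{\nu_k\}_{k\in\zahl{m}}$ and $\{\delta_{ij}\}_{i,j\in\zahl{m'}}$, not merely in expectation. This deterministic version is exactly what is needed later, since it guarantees $\P_{(X,Y)}\in\cP_\Q$ (with $X,Y$ independent uniforms, so $\Q(\cdot\given x)$ is the uniform law on $[0,1]$) under both $H_0$ and $H_{1,n}(\Zeta)$, as required for Algorithm~\ref{algo:1} to be applicable.
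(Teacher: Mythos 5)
Your proof is correct and follows essentially the same route as the paper's: integrate out $z$, reduce to $\int_0^1 h_{j,m'}(z)\,\d z=0$ via the change of variables $u=m'z-j+1$ and \eqref{eq:normalization}. Your closing remark that the identity holds for each fixed realization of the Rademacher sequences is precisely what the paper establishes before its final ``taking expectation'' sentence, so there is no genuine difference in the argument.
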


Moreover, we are able to compute the population Chatterjee's correlation for $\P_{X,Y,Z}=\P_{1,n}$. 

\begin{lemma}\label{lem:tech8}
Under the local alternative, the population Azadkia--Chatterjee
correlation
is
\[
\xi(\P_{1,n})
=6\rho^6 m\int_{0}^{1} \{H(u)\}^2 \d u,
\yestag\label{eq:truevalue}
\]
where $H(x):=\int_{0}^{x} h(u) \d u$.
\end{lemma}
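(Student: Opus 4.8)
The plan is to evaluate the numerator and denominator of $\xi(\P_{1,n})$ in \eqref{eq:xi} separately, exploiting the product structure of the perturbation in \eqref{eq:localmodel}. By Lemma~\ref{lem:tech7}, under $\P_{1,n}$ the marginal of $(X,Y)$ is uniform on $[0,1]^2$; hence $\P_Y$ is the uniform law on $[0,1]$, $Y$ is marginally independent of $X$, and so
\[
\int \E\big[\Var\{\ind(Y\ge y)\mid X\}\big]\,\d\P_Y(y)=\int_0^1 y(1-y)\,\d y=\tfrac16,
\]
which is the constant $\gamma$ already recorded in Section~\ref{subsebsec:711}. It therefore suffices to show that the numerator of \eqref{eq:xi} equals $\rho^6 m\int_0^1 H(u)^2\,\d u$.

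First I would pin down the relevant conditional law. For every block index $i$ a change of variables on the support $[(i-1)/m',i/m']$ together with $\int h=0$ gives $\int_0^1 h_{i,m'}(y)\,\d y=0$ (and likewise for the $z$-bumps). Integrating \eqref{eq:localmodel} over $y$ then yields $q_{X,Z}(x,z)=1$, so $(X,Z)$ is uniform on $[0,1]^2$, $Z\mid X=x$ is uniform on $[0,1]$, and $q_{Y\mid X,Z}(y\mid x,z)=1+\gamma_{\rho,m'}(y,z)\,\eta_{\rho,m}(x)$. Integrating the tail produces
\[
\P(Y\ge y\mid X=x,Z=z)=(1-y)+\eta_{\rho,m}(x)\,G(y,z),\qquad G(y,z):=\int_y^1\gamma_{\rho,m'}(t,z)\,\d t ,
\]
and $\int_0^1 G(y,z)\,\d z=0$ for every $y$, again because $\int_0^1 h_{j,m'}(z)\,\d z=0$.

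Next I would compute the conditional variance. Since $Z\mid X=x$ is uniform and $\E[G(y,Z)\mid X=x]=0$, we get $\Var\{\P(Y\ge y\mid X,Z)\mid X=x\}=\eta_{\rho,m}(x)^2\int_0^1 G(y,z)^2\,\d z$; taking expectation over $X$ and using that the rescaled bumps $\{h_{k,m}\}_{k\in\zahl m}$ have pairwise disjoint supports with $\int h_{k,m}^2=1$ gives $\int_0^1\eta_{\rho,m}(x)^2\,\d x=\rho^2\sum_{k}\nu_k^2=\rho^2 m$. Thus the numerator equals $\rho^2 m\int_0^1\!\!\int_0^1 G(y,z)^2\,\d z\,\d y$. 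For the remaining double integral, fix $y$ in the block $[(i_0-1)/m',i_0/m']$: then $\int_y^1 h_{i,m'}(t)\,\d t$ vanishes for $i\ne i_0$ (the other blocks lie entirely to the left or right of $y$) and equals $-(m')^{-1/2}H(m'y-i_0+1)$ for $i=i_0$, so $G(y,z)=-\rho^2(m')^{-1/2}H(m'y-i_0+1)\sum_{j}\delta_{i_0 j}h_{j,m'}(z)$. Integrating its square in $z$ and using disjointness of $\{h_{j,m'}\}_{j\in\zahl{m'}}$ with $\delta_{ij}^2=1$ gives $\int_0^1 G(y,z)^2\,\d z=\rho^4 H(m'y-i_0+1)^2$; summing over the $m'$ blocks and substituting $u=m'y-i_0+1$ turns $\int_0^1\!\!\int_0^1 G^2$ into $\rho^4\int_0^1 H(u)^2\,\d u$. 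Combining, the numerator is $\rho^6 m\int_0^1 H(u)^2\,\d u$, and dividing by $1/6$ yields \eqref{eq:truevalue}.

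There is no real obstacle here — this is a direct if slightly tedious computation — but the step demanding the most care is the bookkeeping of supports and changes of variables for the rescaled bumps: that for a given $y$ exactly one block survives the $i$-sum in $G$, and that the $j$-sum collapses to a clean factor $m'$ after integrating in $z$. It is worth emphasizing in the write-up that, although $\P_{1,n}$ is built from the Rademacher variables $\{\nu_k\}$ and $\{\delta_{ij}\}$, each of them enters the final answer only through its square, so $\xi(\P_{1,n})$ is the deterministic quantity displayed in \eqref{eq:truevalue}.
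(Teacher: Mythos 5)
Your computation is correct and uses essentially the same approach as the paper: a direct evaluation of the integrals defining $\xi(\P_{1,n})$, exploiting the product form of the perturbation, the vanishing first moments $\int h = \int_0^1\!\int_0^x h = 0$, the disjoint supports of the rescaled bumps, and the normalization $\int h^2=1$. The only cosmetic difference is that you evaluate the numerator of \eqref{eq:xi} directly as $\int\E[\Var\{\P(Y\ge y\mid X,Z)\mid X\}]\,\d\P_Y(y)=\rho^6 m\int_0^1 H^2$, whereas the paper works with the equivalent decomposition $\xi=1-\int\E[\Var\{\ind(Y\ge y)\mid Z,X\}]\,\d\P_Y(y)\big/\int\E[\Var\{\ind(Y\ge y)\mid X\}]\,\d\P_Y(y)$ and shows the inner term equals $\tfrac16-\rho^6 m\int_0^1 H^2$; the two routes hit the same intermediate integrals and give the same answer, and your version is marginally cleaner because it avoids the $1/6$-minus-$(1/6-\cdots)$ cancellation. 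Your closing remark that the Rademacher signs enter only through their squares, so $\xi(\P_{1,n})$ is deterministic, matches the paper's final step of "taking expectation over all Rademacher sequences."
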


To prepare the proof of Claim~\ref{thm:neybov1}, consider 
\[
\widehat\xi_n
:=\frac{n^{-2}\sum_{i=1}^{n}\{\min(R_i,R_{M(i)})-\min(R_i,R_{N(i)})\}}
{\int\E[\Var\{\ind(Y\ge t)\given \mX\}]\d\P_{Y}(t)},
\yestag\label{eq:hatestimate}
\]
for which we have the following lemma. 
\begin{lemma}\label{lem:tech11}
Let $\widehat\xi_{n}(\P_{1,n})$ and $\widehat\xi_{n}(\P_0)$ denote the estimates given by \eqref{eq:hatestimate} under the local alternative hypothesis 
and null hypothesis, respectively. 
Then
\begin{align*}
&\Big\lvert \E\{\widehat\xi_{n}(\P_{1,n})\} 
          - \E\{\widehat\xi_{n}(\P_0)\} \Big\rvert \lesssim \rho^6 m,\\
\text{and}~~~~
&\Big\lvert \E[n\{\widehat\xi_{n}(\P_{1,n})\}^2] 
          - \E[n\{\widehat\xi_{n}(\P_0)\}^2] \Big\rvert \lesssim n\rho^6 m.
\yestag\label{eq:meanvar2}
\end{align*}
\end{lemma}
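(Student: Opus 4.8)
The plan is to strip $\widehat\xi_n$ down to the only ingredients whose distribution changes between $\P_0$ and $\P_{1,n}$, and then to compare the relevant moments through the sign‑averaged likelihood ratio of the perturbation in Example~\ref{ex:neybov}. By Lemma~\ref{lem:tech7} the $(\mX,Y)$-marginal is $\mathrm{Unif}([0,1]^2)$ under both hypotheses, so $Y$ is independent of $\mX$ with uniform marginals and the denominator $\int\E[\Var\{\ind(Y\ge t)\given\mX\}]\,\d\P_Y(t)$ equals $1/6$ in each case; hence $\widehat\xi_n=6n^{-2}(\Sigma_M-\Sigma_N)$ with $\Sigma_M:=\sum_{i=1}^n\min(R_i,R_{M(i)})$ and $\Sigma_N:=\sum_{i=1}^n\min(R_i,R_{N(i)})$. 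Since $\int\gamma_{\rho,m'}(y,z)\,\d y=\int\gamma_{\rho,m'}(y,z)\,\d z=0$, under $\P_{1,n}$ (conditionally on the Rademacher signs, hence also marginally) the sub‑samples $[(\mX_i,Y_i)]_{i=1}^n$ and $[(\mX_i,Z_i)]_{i=1}^n$ are each i.i.d.\ $\mathrm{Unif}([0,1]^2)$, exactly as under $\P_0$. Therefore $\Sigma_N$ and the nearest‑neighbor graph defining the $M(i)$'s have the same law under both hypotheses, so $\E_{\P_{1,n}}[\Sigma_N]=\E_{\P_0}[\Sigma_N]$ and $\E_{\P_{1,n}}[\Sigma_N^2]=\E_{\P_0}[\Sigma_N^2]$, and after dividing out normalizations the two claimed bounds are equivalent to
\[
\big|\E_{\P_{1,n}}[\Sigma_M]-\E_{\P_0}[\Sigma_M]\big|\lesssim n^2\rho^6 m
\quad\text{and}\quad
\big|\E_{\P_{1,n}}[\Sigma_M^2-2\Sigma_M\Sigma_N]-\E_{\P_0}[\Sigma_M^2-2\Sigma_M\Sigma_N]\big|\lesssim n^4\rho^6 m.
\]

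Next I would change measure. Conditionally on the signs the sample is i.i.d.\ with density $q(x,y,z)=1+\gamma_{\rho,m'}(y,z)\eta_{\rho,m}(x)$, so for any statistic $g$ one has $\E_{\P_{1,n}}[g]-\E_{\P_0}[g]=\E_{\P_0}[g\,(\Lambda_n-1)]$ with $\Lambda_n:=\E_{\mathrm{sign}}\prod_{i=1}^n q(\mX_i,Y_i,Z_i)$, a function of the i.i.d.\ uniform sample. Expanding the product over subsets $S\subseteq\zahl n$ and averaging over the independent Rademacher variables, every $S$ of odd size contributes $0$, $S=\emptyset$ contributes $1$, and the $|S|=2$ term equals $\rho^6\sum_{i<j}A_{ij}B_{ij}C_{ij}$, where $A_{ij}:=\sum_{k\in\zahl m}h_{k,m}(\mX_i)h_{k,m}(\mX_j)$ is supported on — and of size $O(m)$ on — the event that $\mX_i,\mX_j$ lie in a common length-$1/m$ bin, and $B_{ij},C_{ij}$ are the analogous length-$1/m'$ bin kernels in the $Y$- and $Z$-coordinates; the terms with $|S|=2r\ge 4$ carry a prefactor $\rho^{6r}$ against $O(n^{2r})$ summands localized into $r$ coarse $\mX$-bins and $r$ fine $Z$-bins and are of lower order under the scaling \eqref{eq:perturbation}.

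The substance of the argument is to bound $\E_{\P_0}[\Sigma_M(\Lambda_n-1)]$, $\E_{\P_0}[\Sigma_M^2(\Lambda_n-1)]$ and $\E_{\P_0}[\Sigma_M\Sigma_N(\Lambda_n-1)]$. Writing $\Sigma_M=\sum_{i,l}\ind\{Y_l\le\min(Y_i,Y_{M(i)})\}$ (and the quadratic statistics as quadruple sums), I would pair each multi‑indexed indicator with one kernel $A_{pp'}B_{pp'}C_{pp'}$ and condition on the $\sigma$-field generated by $[(\mX_i,Z_i)]_{i=1}^n$: then $A_{pp'},C_{pp'}$ and all of $M(\cdot),N(\cdot)$ are measurable, $B_{pp'}=\sum_a h_{a,m'}(Y_p)h_{a,m'}(Y_{p'})$ depends only on $(Y_p,Y_{p'})$, and the indicators depend on the remaining i.i.d.\ $\mathrm{Unif}([0,1])$ coordinates. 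Because $\int h_{a,m'}(y)\,\d y=0$ (by \eqref{eq:normalization}) the conditional expectation of the $Y$-part vanishes unless both $p$ and $p'$ appear among the indices of the surviving indicators, and when they do the further identities $\int y\,h_{a,m'}(y)\,\d y=\int y^2 h_{a,m'}(y)\,\d y=0$ — which is exactly what the symmetry conditions \eqref{eq:specific} purchase — cancel the would‑be leading terms and leave a remainder carrying extra powers of $1/m'$ and $1/n$. Pinning the active pair $(p,p')$ to a prescribed sub‑pair of the slots $\{i,M(i),l,\dots\}$ then collapses the multi‑index sums: each pattern in which $M(\cdot)$ or $N(\cdot)$ is pinned to $p$ or $p'$ contributes an indicator $\ind\{M(i)=p\}$ (or $\ind\{N(i)=p\}$, etc.) whose probability, intersected with the coarse‑ and fine‑bin localizations forced by $A_{pp'}$ and $C_{pp'}$, is $O(\min\{1,mm'/n\})$, and Lemma~\ref{lem:MD} bounds the number of such patterns. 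Combining all of these estimates with the bin‑localization probabilities $\tfrac1m,\tfrac1{m'},\tfrac1{m'}$ of $A_{pp'},B_{pp'},C_{pp'}$, and summing over the $\binom n2$ pairs $\{p,p'\}$, yields in every regime of $(m,m',n,\rho)$ permitted by the construction bounds comfortably below $n^2\rho^6 m$ and $n^4\rho^6 m$ — the scale $\rho^6 m$ being the natural one, since $\xi(\P_{1,n})=6\rho^6 m\int_0^1\{H(u)\}^2\,\d u$ by Lemma~\ref{lem:tech8}; the $|S|\ge4$ terms of $\Lambda_n-1$ are absorbed with \eqref{eq:perturbation}.

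The hard part will be this last step. The statistic $\Sigma_M$, and a fortiori $\Sigma_M^2$ and $\Sigma_M\Sigma_N$, entangles the $Y$-ranks with the nearest‑neighbor graph built on the $(\mX,Z)$'s, so conditioning on positions does not decouple the indicators from the perturbation kernel; one must enumerate every combinatorial pattern by which the active indices occupy slots in the multi‑index, invoke Lemma~\ref{lem:MD} wherever $M(\cdot)$ or $N(\cdot)$ is pinned, track the rare‑event factors carefully, and verify that the symmetry conditions \eqref{eq:specific} really cancel the dangerous terms uniformly over $s\in(0,1]$. This is precisely the delicate handling of non‑uniform permutation statistics anticipated in the introduction; by contrast the reduction and the likelihood‑ratio expansion of the first two steps are routine.
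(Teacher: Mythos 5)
Your proposal is correct and takes essentially the same route as the paper: reduce to the $M$-graph contribution (using that the $(X,Y)$- and $(X,Z)$-marginals are uniform under both measures so that $\Sigma_N$ and the $M$-graph are distributionally invariant), average over the Rademacher signs so that only the $\rho^6$ pair terms survive, and bound those via the bin localization of the $\mX$- and $Z$-kernels together with the orthogonality and symmetry relations \eqref{eq:normalization} and \eqref{eq:specific} that make the $Y$-side integrals contribute only $O(1/m')$. The paper organizes the same calculation as a direct expansion of $\E_{\nu,\delta}\{\prod_k q_{Y,\mW}(y_k,\mw_k)\}$ inside the expectation rather than through a likelihood-ratio factor $\Lambda_n-1$, but this is cosmetic; like you, the paper writes out the first-moment integrals explicitly (its displays \eqref{eq:compu-tools1}--\eqref{eq:compu-tools2}) and dispatches the second-moment and cross-term bounds as ``similar but more involved.''
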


We also have the following lemma for $\widehat\xi_n$:
\begin{lemma}\label{lem:tech12}
Under the local alternative hypothesis, assuming $\rho^2 m \lesssim n^{-1}$, it holds that
\[
\sqrt{n}\xi_n - \sqrt{n}\widehat\xi_n = o_{\P}(1).
\yestag\label{eq:slu}
\]
\end{lemma}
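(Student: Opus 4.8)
The plan is to isolate the only difference between $\xi_n$ and $\widehat\xi_n$: the two statistics share the numerator $N_n:=\sum_{i=1}^{n}\{\min(R_i,R_{M(i)})-\min(R_i,R_{N(i)})\}$, but $\xi_n$ divides it by the random quantity $n^2D_n$ with $D_n:=n^{-2}\sum_{i=1}^{n}\{R_i-\min(R_i,R_{N(i)})\}$, whereas $\widehat\xi_n$ divides it by the deterministic $n^2\gamma$ with $\gamma:=\int\E[\Var\{\ind(Y\ge t)\given\mX\}]\d\P_Y(t)=1/6$, the last equality holding because under $\P_{1,n}$ the variables $X$ and $Y$ are independent and uniform on $[0,1]$ (Lemma~\ref{lem:tech7}). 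Hence $\xi_n=\widehat\xi_n\,\gamma/D_n$, so that
\[
\sqrt{n}\xi_n-\sqrt{n}\widehat\xi_n=\sqrt{n}\,\widehat\xi_n\cdot\frac{\gamma-D_n}{D_n}.
\]
It therefore suffices to prove, both under $\P_{1,n}$, that (A) $D_n\stackrel{\sf p}{\longrightarrow}\gamma$ and (B) $\sqrt{n}\,\widehat\xi_n=O_{\P}(1)$; since $\gamma>0$, (A) gives $(\gamma-D_n)/D_n=o_{\P}(1)$, and multiplying by the $O_{\P}(1)$ factor of (B) yields the claim.

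For (A), the key observation is that $D_n$ is a measurable function of $[(\mX_i,Y_i)]_{i=1}^{n}$ together with the external tie-breaking randomness only, because $N(i)$ depends solely on $[\mX_i]_{i=1}^{n}$ and each $R_i$ solely on $[Y_i]_{i=1}^{n}$. By Lemma~\ref{lem:tech7}, the law of $[(\mX_i,Y_i)]_{i=1}^{n}$ is $\mathrm{Unif}([0,1]^2)^{\otimes n}$ under both $\P_0$ and $\P_{1,n}$, so $D_n$ has the same distribution under either. Under $\P_0$, \citet[Theorem~9.1]{azadkia2019simple} gives $D_n\stackrel{\sf a.s.}{\longrightarrow}\gamma$, hence $D_n\stackrel{\sf p}{\longrightarrow}\gamma$; as convergence in probability depends only on the marginal laws of $D_n$, the same holds under $\P_{1,n}$.

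For (B), I would transfer a second-moment bound from the null by means of Lemma~\ref{lem:tech11}. Under $\P_0$ one has $\E\{\widehat\xi_n(\P_0)\}=0$ --- conditionally on the two nearest-neighbor graphs, $(R_i,R_{M(i)})$ and $(R_i,R_{N(i)})$ are each uniform over ordered pairs of distinct ranks, so the two minima have equal expectation --- and $\E[n\{\widehat\xi_n(\P_0)\}^2]=O(1)$, which is precisely the content of the variance computation in Step~II of the proof of Theorem~\ref{thm:powerless}\ref{thm:pl1} specialized to $b=0$ (together with the $L^2$-comparison of $\widehat\xi_n$ and $\widecheck\xi_n$ underlying Lemma~\ref{lem:tech4}). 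The assumption $\rho^2 m\lesssim n^{-1}$ forces $\rho\to0$, whence $n\rho^6 m=\rho^4\cdot(n\rho^2 m)\lesssim\rho^4=o(1)$; Lemma~\ref{lem:tech11} then gives $\E[n\{\widehat\xi_n(\P_{1,n})\}^2]=\E[n\{\widehat\xi_n(\P_0)\}^2]+O(n\rho^6 m)=O(1)$, so Markov's inequality yields $\sqrt{n}\,\widehat\xi_n=O_{\P}(1)$ under $\P_{1,n}$.

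The main obstacle is part (B), and specifically the bound $\E[n\{\widehat\xi_n(\P_0)\}^2]=O(1)$: while the required second-moment asymptotics are already essentially computed in the proof of Theorem~\ref{thm:powerless}, one has to be careful to upgrade the $o_{\P}$-level comparison between $\widehat\xi_n$ and $\widecheck\xi_n$ in Lemma~\ref{lem:tech4} to an $L^2$ statement (or otherwise argue boundedness of $\E[n\widehat\xi_n^2]$ directly from the dependency-graph structure of the numerator), and to check that the constants implicit in Lemma~\ref{lem:tech11} are uniform over the parameter range of Example~\ref{ex:neybov}. Everything else is a short Slutsky-type argument.
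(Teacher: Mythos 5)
Your proposal follows essentially the same route as the paper: the algebraic identity $\sqrt{n}\xi_n-\sqrt{n}\widehat\xi_n=\sqrt{n}\widehat\xi_n\{\gamma/D_n-1\}$, tightness of $\sqrt{n}\widehat\xi_n$ from the second-moment bound \eqref{eq:secondmoment} (itself coming from Lemma~\ref{lem:tech11} plus $n\rho^6 m\to0$), and consistency of the denominator $D_n$ via Theorem~9.1 of \citet{azadkia2019simple}. Where you add value is in making explicit two justifications the paper leaves implicit: first, that the a.s.~convergence of $D_n$ is stated under a fixed distribution, and that the transfer to the triangular-array law $\P_{1,n}$ goes through precisely because $D_n$ is a function of $[(\mX_i,Y_i)]_{i=1}^n$ alone (plus tie-breaking), whose law is identical under $\P_0$ and $\P_{1,n}$ by Lemma~\ref{lem:tech7}; second, that \eqref{eq:secondmoment} requires connecting the variance limit for $\widecheck\xi_n$ (Step~II of Theorem~\ref{thm:powerless}) to the second moment of $\widehat\xi_n$, an $L^2$-level comparison that Lemma~\ref{lem:tech4} as stated does not directly furnish and which the paper glosses by ``in view of the proof of Theorem~\ref{thm:powerless}\ref{thm:pl1}.'' Your flagging of the latter is a fair critique rather than a gap on your end.
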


In the sequel, we set 
\[
m = m' = \lceil K_1 n^{2/(4s+3)}\rceil, ~~~~ \rho^3 = K_2 n^{-(2s+3)/(4s+3)},
\yestag\label{eq:mrho-1}
\]
where $K_1$ and $K_2$ are positive absolute constants satisfying
\[
K_2 K_1^{3/2+s} = L/(8c_{\infty}^3)
~~~~\text{and}~~~~
K_2 K_1^{3/2}   = 12\Zeta/c_1^3. 
\yestag\label{eq:mrho-2}
\]
We obtain the following result. 
\begin{lemma}\label{lem:tech9}
Assumption~\eqref{eq:perturbation} holds and $\P_0\in \cP_{0}(L,s),\P_{1,n}\in \cP_1(\Zeta n^{-2s/(4s+3)};L,s)$.
\end{lemma}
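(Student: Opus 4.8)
The plan is to verify the three assertions separately: the perturbation bound \eqref{eq:perturbation}, the membership $\P_0\in\cP_0(L,s)$, and the membership $\P_{1,n}\in\cP_1(\Zeta n^{-2s/(4s+3)};L,s)$; the last one decomposes into checking the three H\"older/boundedness conditions of Definition~\ref{def:hs-alter} together with the $L_1$-separation $\inf_{q^0\in\cP_0}\lVert q_{1,n}-q^0\rVert_1\ge\Zeta n^{-2s/(4s+3)}$. The claim $\P_0\in\cP_0(L,s)$ is immediate, since the null density \eqref{eq:nullmodel} is identically $1$, so $q_{Y\given X}(\cdot\given x)\equiv1$ and $q_{Z\given X}(\cdot\given x)\equiv1$ do not depend on $x$, the joint factorizes (hence $Y\indep Z\given X$), and the two increments in Definition~\ref{def:hs-null} vanish.

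For \eqref{eq:perturbation}, since $m=m'$ one has $\rho^3\sqrt{m}\sqrt{(m')^2}c_\infty^3=\rho^3 m^{3/2}c_\infty^3$, and substituting \eqref{eq:mrho-1} with $\lceil K_1n^{2/(4s+3)}\rceil\le2K_1n^{2/(4s+3)}$ (valid for $n$ large) gives $\rho^3 m^{3/2}c_\infty^3\le K_2(2K_1)^{3/2}c_\infty^3\,n^{-2s/(4s+3)}\to0$; in particular $\lvert\gamma_{\rho,m'}(y,z)\eta_{\rho,m}(x)\rvert\le\rho^3 m^{3/2}c_\infty^3\to0$, which I reuse below. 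For Definition~\ref{def:hs-alter} I first note that $\int h=0$ forces $\int h_{k,m}=0$, so integrating \eqref{eq:localmodel} in $(y,z)$ shows the $X$-marginal of $\P_{1,n}$ is uniform on $[0,1]$, whence $q_{(Y,Z)\given X}(y,z\given x)=1+\gamma_{\rho,m'}(y,z)\eta_{\rho,m}(x)$. Boundedness then follows from the previous display, since $1+\gamma\eta\in[\tfrac12,\tfrac32]\subseteq[L^{-1},L]$ for $n$ large. For the two increment bounds I use that the bumps $h_{k,m}$ (resp.\ $h_{i,m'}$) have pairwise disjoint supports, so $\lVert\eta_{\rho,m}\rVert_\infty\le\rho\sqrt{m}\,c_\infty$ while $\lVert\eta_{\rho,m}'\rVert_\infty\le\rho m^{3/2}c_\infty$; interpolating via $\min(A,Bt)\le A^{1-s}(Bt)^s$ gives $\lvert\eta_{\rho,m}(x)-\eta_{\rho,m}(x')\rvert\le2^{1-s}\rho m^{1/2+s}c_\infty\lvert x-x'\rvert^{s}$ and, applying the same reasoning coordinatewise, $\lvert\gamma_{\rho,m'}(y,z)-\gamma_{\rho,m'}(y',z')\rvert\le2^{1-s}\rho^{2}(m')^{1+s}c_\infty^{2}(\lvert y-y'\rvert^{s}+\lvert z-z'\rvert^{s})$. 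Multiplying each by the complementary sup-norm factor and using $m=m'$, both increments of $q_{(Y,Z)\given X}$ are bounded by $2^{1-s}\rho^{3}m^{3/2+s}c_\infty^{3}$ times the corresponding H\"older modulus, and \eqref{eq:mrho-2} makes $\rho^{3}m^{3/2+s}c_\infty^{3}=L/8+o(1)$; hence these constants are at most $L$ for $n$ large, so $\P_{1,n}\in\cP_1(L,s)$. (That $\P_{1,n}\notin\cP_0$, so that it indeed lies in $\cP_1$, follows from Lemma~\ref{lem:tech8} since $\xi(\P_{1,n})>0$, or a posteriori from the $L_1$-separation.)

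The remaining and decisive step is the $L_1$-separation. The idea is to test $q_{1,n}$ against the sign-type functional $g(x,y,z):=\sgn\!\big(\eta_{\rho,m}(x)\big)\sum_{i,j\in\zahl{m'}}\delta_{ij}\,\sgn\!\big(h_{i,m'}(y)\big)\sgn\!\big(h_{j,m'}(z)\big)$, which has $\lVert g\rVert_\infty\le1$. Using $\int_0^1\sgn(h(u))\,du=0$ (from the antisymmetry $h(x)=-h(1-x)$), which kills the constant part of $q_{1,n}$, together with $\int\sgn(h_{i,m'})\,h_{k,m'}=(c_1/\sqrt{m'})\mathbf 1\{i=k\}$ and $\int\lvert\eta_{\rho,m}\rvert=\rho\sqrt{m}\,c_1$, a direct computation gives $\int g\,q_{1,n}=\rho^{3}m^{3/2}c_1^{3}$, which by the second calibration in \eqref{eq:mrho-2} is at least $12\Zeta n^{-2s/(4s+3)}$. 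Since $\lVert q_{1,n}-q^{0}\rVert_1\ge\int g\,(q_{1,n}-q^{0})\ge12\Zeta n^{-2s/(4s+3)}-\lvert\int g\,q^{0}\rvert$ for every $q^{0}\in\cP_0$, it remains to bound $\lvert\int g\,q^{0}\rvert$ by $11\Zeta n^{-2s/(4s+3)}$ uniformly over $\cP_0$.

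Here the conditional-independence structure enters. Writing $q^{0}=q^{0}_Xq^{0}_{Y\given X}q^{0}_{Z\given X}$ one gets $\int g\,q^{0}=\int q^{0}_X(x)\sgn(\eta(x))\,\boldsymbol\alpha(x)^{\top}D\boldsymbol\beta(x)\,dx$ with $D=(\delta_{ij})$, $\alpha_i(x)=\langle\sgn(h_{i,m'}),q^{0}_{Y\given X}(\cdot\given x)\rangle$ and $\beta_j(x)=\langle\sgn(h_{j,m'}),q^{0}_{Z\given X}(\cdot\given x)\rangle$; the mean-zero property $\langle\sgn(h_{i,m'}),1\rangle=0$ lets one replace the conditionals by their deviations from the uniform density, so for realizations of the signs with $\lVert D\rVert_{\op}\lesssim\sqrt{m'}$ (which occur with probability tending to one) the quadratic form $\boldsymbol\alpha(x)^{\top}D\boldsymbol\beta(x)$ is negligible at those $x$ where $q^{0}_{Y\given X}(\cdot\given x)$ and $q^{0}_{Z\given X}(\cdot\given x)$ are close to uniform, while at the remaining $x$ one bounds $\lVert q_{1,n}-q^{0}\rVert_1$ from below directly through the $(X,Y)$- and $(X,Z)$-marginals of $\P_{1,n}$, which are uniform by the computation above; balancing these contributions reproduces the argument in the proof of Theorem~4.2 of \cite{neykov2020minimax}. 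This dichotomy, not the routine H\"older bookkeeping, is the part I expect to be the main obstacle; the reason it can work at all is that the perturbation $\gamma_{\rho,m'}(y,z)\eta_{\rho,m}(x)$ carries no component of the form $a(x)+b(x,y)+c(x,z)$ (it is a pure ``$(Y,Z)$-interaction'' at every $x$) yet still satisfies $\lVert\gamma_{\rho,m'}\eta_{\rho,m}\rVert_\infty\lesssim\lVert\gamma_{\rho,m'}\eta_{\rho,m}\rVert_2\asymp n^{-2s/(4s+3)}$, so a bounded test function can separate it from $\cP_0$ at exactly the claimed scale.
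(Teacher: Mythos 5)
Most of your argument is correct and aligns with the paper's: the claim $\P_0\in\cP_{0}(L,s)$ is indeed immediate from $q_0\equiv1$, and your derivation of the perturbation bound \eqref{eq:perturbation} and of the H\"older/boundedness conditions of Definition~\ref{def:hs-alter} via $\lvert\gamma\eta\rvert\le\rho^3 m^{3/2}c_\infty^3$ and the interpolation $\min(A,Bt)\le A^{1-s}(Bt)^s$ gives the same two increment inequalities the paper records and combines with the calibration \eqref{eq:mrho-2}.

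The genuine gap is in the $L_1$-separation. You attempt a test-functional argument and reduce to showing $\lvert\int g\,q^0\rvert\le 11\Zeta n^{-2s/(4s+3)}$ uniformly over $q^0\in\cP_0$, but the step that would prove this is only sketched: you invoke $\lVert D\rVert_{\op}\lesssim\sqrt{m'}$ ``for realizations of the signs with probability tending to one,'' which introduces a randomness over the Rademacher coefficients $\delta_{ij}$ that is not resolved (the lemma asserts the inclusion for $\P_{1,n}$, so you would need to either condition on a good realization or show the claim holds for every realization), and the ensuing ``dichotomy'' between $x$ where the conditionals are nearly uniform and the rest is acknowledged by you as the main obstacle rather than carried out. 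This is precisely the technical work that the paper sidesteps by appealing to Lemma~B.4 in \citet[Appendix~B]{neykov2020minimax}: that lemma gives $\inf_{q^0\in\cP_0}\lVert q-q^0\rVert_1\ge\tfrac16\lVert q-\tilde q\rVert_1$ where $\tilde q=q_Xq_{Y\given X}q_{Z\given X}$ is the independent projection. Because you have already shown that the $(X,Y)$- and $(X,Z)$-marginals of $\P_{1,n}$ are uniform \emph{for every realization of the signs} (this follows from $\int h=0$ alone), the projection is $\tilde q\equiv1$, so the separation reduces to the one-line deterministic computation $\lVert q_{1,n}-1\rVert_1=\big(\int\lvert\eta_{\rho,m}\rvert\big)\big(\int\lvert\gamma_{\rho,m'}\rvert\big)=\rho^3\sqrt{m}\,m'\,c_1^3\ge 6\Zeta n^{-2s/(4s+3)}$ via \eqref{eq:mrho-1}--\eqref{eq:mrho-2}. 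Replacing your dichotomy sketch with this projection lemma closes the gap and removes all the randomness bookkeeping.
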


\smallskip
Now it is ready to prove Theorem~\ref{thm:neybov}. 


\begin{proof}[Proof of Theorem~\ref{thm:neybov}\ref{thm:neybov1}]

In view of the proof of Theorem~\ref{thm:powerless}\ref{thm:pl1}, we
obtain from \eqref{eq:meanvar2} that
\[
n\E[\{\widehat\xi_{n}(\P_{1,n})\}^2]
\to\frac45
+\frac25\Big\{\kq_{2}+\kq_{1}\Big\}
+\frac45\Big\{\ko_{2}+\ko_{1}\Big\}=:\sigma^2.
\yestag\label{eq:secondmoment}
\]
Using \eqref{eq:yexu1} and \eqref{eq:yexu2} once again, we obtain
\begin{align*}
\limsup_{n\to\infty} \P_{H_{1,n},\Q}\Big(\sT^{\Q,\xi_n}_\alpha=1\Big)
&=\limsup_{n\to\infty} \P_{H_{1,n},\Q}\Big(\sqrt{n}\xi_{n}(\P_{1,n}) > \sqrt{n}\xi_n^{[1+B-\lfloor\alpha(1+B)\rfloor]}\Big)\\
&=\limsup_{n\to\infty} \P_{H_{1,n},\Q}\Big(\sqrt{n}\xi_{n}(\P_{1,n}) > \Phi_{\sigma}^{-1}(1-\alpha)\Big).
\end{align*}
Combining \eqref{eq:slu} and \eqref{eq:secondmoment} yields
\begin{align*}
\limsup_{n\to\infty} \P_{H_{1,n},\Q}\Big(\sqrt{n}\xi_{n}(\P_{1,n}) > \Phi_{\sigma}^{-1}(1-\alpha)\Big)
&=\limsup_{n\to\infty} \P_{H_{1,n},\Q}\Big(\sqrt{n}\widehat\xi_{n}(\P_{1,n}) > \Phi_{\sigma}^{-1}(1-\alpha)\Big)\\
&\le \lim_{n\to\infty} \frac{\E[n\{\widehat\xi_{n}(\P_{1,n})\}^2]}{\{\Phi_{\sigma}^{-1}(1-\alpha)\}^2}
 =\frac{1}{\{\Phi^{-1}(1-\alpha)\}^2}=:\beta_{\alpha}.
\yestag\label{eq:mafan}
\end{align*}
It is easy to check $\beta_{\alpha} < 1$ for any $\alpha < 0.1$. 
This completes the proof for $\xi_n$. 
\end{proof}


\begin{proof}[Proof of Theorem~\ref{thm:neybov}\ref{thm:neybov2}]
We first prove
\[
\lim_{\Zeta\to 0}\lim_{n\to\infty}\TV(H_{1,n}(\Zeta), H_0)=0.
\]
We use the relation \citep[Equation~(2.27)]{MR2724359}
\[
\TV\Big(H_{1,n}(\Zeta), H_0\Big)
\le\Big\{\chi^2\Big(H_{1,n}(\Zeta), H_0\Big)\Big\}^{1/2}
\]
where the $\chi^2(\Q, \P):=\int(\d\Q/\d\P-1)^2\d\P$ denotes the chi-square distance between $\Q$ and $\P$. In view of the Proof of Theorem 4.2 in \citet[Appendix~B]{MR4319245}, we have that
\[
\chi^2\Big(H_{1,n}(\Zeta), H_0\Big)\le C_0(n \rho^6)^2 m (m')^2, ~~~~\text{for}~(n \rho^6)^2 m (m')^2\le \frac12,
\]
where $C_0$ is some absolute constant. Using \eqref{eq:mrho-1} and \eqref{eq:mrho-2}, we have
\[
(n \rho^6)^2 m (m')^2 \to \frac{\{12\Zeta/c_1^3\}^{4+3/s}}{\{L/(8c_{\infty}^3)\}^{3/s}},
\]
where the right-hand side tends to $0$ as $\Zeta\to0$. 
This concludes the proof. 

\smallskip Next, we prove the existence of $\sT^{\rm bin}$. 
The testing strategy is exactly the same as described in Section 5.3 of \citet{MR4319245} (see also Section 5.2). The only difference lies in the binning size. 
In detail, let
\[d=\ell_1=\ell_2=\lceil  n^{2/(4s+3)}\rceil.\]
We partition the support $[0,1]$ into bins $\{\cC_j\}_{j=1}^{d}$, where $\cC_j:=[(j-1)/d,j/d)$. 
Using a Poisson random variable $N$ with mean $n/2$, we will accept
the null hypothesis if $N > n$, and if $N\le n$ we draw without replacement a random sample $\cS$ of size $N$ from $\zahl{n}$. 
The set of discretized observations 
is defined as $\{(X_i, Y_i', Z_i')\}_{i\in \cS}$, 
where $Y_i':= j$ iff $Y_i\in \cC_j$ 
  and $Z_i':= j$ iff $Z_i\in \cC_j$ for $j\in\zahl{d}$. 
Let $\cD_j:=\{(Y_i', Z_i')\}_{X_i\in\cC_j, i\in \cS}$, 
and let $\sigma_j$ be the cardinality of $\cD_j$. 
For brevity, suppose that $\cD_j$ can be reindexed as $\{(Y_k', Z_k')\}_{k\in\zahl{\sigma_j}}$.
For $\sigma_j\ge 4$, assume that $\sigma_j = 4 + 4t_j$ for some $t_j\in \N$. 
Define $t_{1,j} := \min(t_j,\ell_1)$ and $t_{2,j} := \min(t_j,\ell_2)$. 
Next we split $\cD_j$ into three datasets of sizes $t_{1,j}$, $t_{2,j}$, and $2t_j + 4$ as below: 
$\cD_{j,\,Y'}=\{Y_k'\}_{k=1}^{t_{1,j}}$, 
$\cD_{j,\,Z'}=\{Z_k'\}_{k=t_{1,j}+1}^{t_{1,j}+t_{2,j}}$,
and $\cD_{j,\,Y',Z'}=\{(Y_k',Z_k')\}_{k=2t_j+1}^{\sigma_j}$,
and we are able to compute $U_j:=U_{W}(\cD_j)$ defined in
Equation~(5.5) of \citet{MR4319245} for each $\cD_j$ with at
least four observations.  The statistic $U_j$ is a weighted U-statistic that has
similarities with a Pearson $\chi^2$-statistic for testing
independence of $Y'_j$ and $Z'_j$ based on $\cD_j$.
We can now compute the test statistic defined in Equation~(5.6) of \citet{MR4319245}:
\[T=\sum_{j\in \zahl{d}}\ind(\sigma_j\ge4)\sigma_j\omega_j U_j,\]
where $\omega_j=\sqrt{\min(\sigma_j,\ell_1)\min(\sigma_j,\ell_2)}$.  Finally, we define the test $\sT^{\rm bin}$ as
\[\sT^{\rm bin}=\ind(N\le n) \ind(T\ge \zeta\sqrt{ d})\]
where $\zeta$ is a sufficiently large absolute constant.

The proof of our claim about $\sT^{\rm bin}$ proceeds in parallel to the proof
of Theorems~5.5 and 5.6 in \citet{MR4319245}.  As we detail
now, the following main differences arise.
We start from the proof of their Theorem~C.5 (which implies Theorem 5.5).
Lemmas C.7--C.9 therein still hold. 
Suppose that
\[
q\in\cP_{1}(L,s)~~~~\text{and}~~~
\inf_{q^{0} \in \cP_{0}} \lVert q - q^{0}\rVert_1\ge \epsilon:=\Zeta n^{-2s/(4s+3)}.
\]
For the expression in (C.13) in \citet{MR4319245}, we have that
\[
\sum_{j\in\zahl{d}}\frac{\epsilon_j}{\sqrt{\ell_1\ell_2}}\alpha_j
\ge \frac{n}{\sqrt{\ell_1\ell_2}}\Big(\frac{\epsilon}{2}-\frac{3}{2}\frac{L}{d^s}\Big)
=:  \frac{n}{\sqrt{\ell_1\ell_2}}\eta.
\]
We will assume that $\Zeta\ge 3L$ such that
\[\epsilon\ge {3L}/{d^s}~~~~\text{and}~~~~\eta\ge \epsilon/4.\] 
It may be readily checked that the following conditions hold:
\begin{align}
\frac{n \eta^2}{\sqrt{\ell_1\ell_2}} 
&\ge C_1\Zeta^2\sqrt{ d},\label{eq:epsilon:condition:one}\\
\max\Big(\frac{n^{3/2}\eta^2}{ \ell_1\sqrt{\ell_2}\frac{ \sqrt{n\ell_1}}{\ell_2}},
 \frac{n^{3/2}\eta^2}{ \ell_1\sqrt{\ell_2}\sqrt{d}}\Big) 
&\ge C_2\Zeta^2\sqrt{ d},\\
\frac{n^2 \eta^2}{36 \ell_1\ell_2d} 
&\ge C_3\Zeta^2\sqrt{ d},\\
\text{and}~~~~
\frac{n^4 \eta^4 }{16\ell_1 \ell_2  d^3} 
&\ge C_4\Zeta^2\sqrt{ d},
\label{eq:epsilon:condition:four}
\end{align}
where $C_1,C_2,C_3,C_4$ are absolute constants. 

Denote $\sigma = \{\sigma_j\}_{j \in \zahl{d}}$ and 
$R = \{\{\cD_{j,Y'},\cD_{j,Z'}\}\}_{j \in \zahl{d}}$. 
We have the following results. 

\begin{lemma}\label{lem:alter_variance}
Suppose $\P_{X,Y,Z}\in\cP_1(\epsilon;L,s)$, where $\epsilon \geq
3{L}/{d^s}$ and  Conditions
\eqref{eq:epsilon:condition:one}--\eqref{eq:epsilon:condition:four}
hold. Then with probability at least $1-\beta/2$ over $\sigma, R$ we
have for some absolute constants $C_5,C_6$ depending on $L,s$ that
\[\E[T | \sigma, R] \ge C_5\Zeta^2\sqrt{ d}\]
and 
\[
\Var[T | \sigma, R] \leq C_6(d + (\sqrt{d} + 1)\E[T | \sigma, R] + \E[T | \sigma, R]^{3/2}).
\]
\end{lemma}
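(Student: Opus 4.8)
The plan is to follow, essentially line by line, the proof of Theorem~C.5 (hence Theorem~5.5) of \citet{neykov2020minimax} together with the per-bin estimates feeding it, adapting only the bookkeeping forced by the binning resolution $d=\ell_1=\ell_2=\lceil n^{2/(4s+3)}\rceil$ and the separation level $\epsilon=\Zeta n^{-2s/(4s+3)}$. The structural fact that makes both claims tractable is that, conditionally on the bin counts $\sigma=\{\sigma_j\}_{j\in\zahl d}$ and on $R=\{\{\cD_{j,Y'},\cD_{j,Z'}\}\}_{j\in\zahl d}$, distinct bins draw on disjoint subsets of the observations, so the weighted $U$-statistics $\{\sigma_j\omega_jU_j\}_j$ are mutually independent; consequently $\E[T\mid\sigma,R]=\sum_{j}\ind(\sigma_j\ge4)\sigma_j\omega_j\,\E[U_j\mid\sigma,R]$ and $\Var[T\mid\sigma,R]=\sum_{j}\ind(\sigma_j\ge4)\sigma_j^2\omega_j^2\,\Var[U_j\mid\sigma,R]$, and everything reduces to a per-bin analysis plus a sum over at most $d$ bins. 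First I would isolate a ``good event'' $\mathcal{A}$ in $(\sigma,R)$-space on which the Poissonised counts concentrate about their means (so a positive fraction of bins satisfy $\sigma_j\asymp n/d$ and the splits have the regular sizes $t_{1,j},t_{2,j}$ built into the construction); by the Poissonisation device together with Chernoff/Bernstein bounds for multinomial counts --- which is the content of Lemmas~C.7--C.9 of \citet{neykov2020minimax}, still valid verbatim --- one has $\Pr(\mathcal{A})\ge 1-\beta/2$, and all the bounds below are asserted on $\mathcal{A}$.

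For the lower bound on the conditional mean, I would use the Hoeffding-type estimate of $\E[U_j\mid\sigma,R]$ in terms of the local $\ell_1$-discrepancy $\epsilon_j$ of the within-bin conditional law of $(Y',Z')$ from product form (Neykov's display~(C.13)), combine it with the aggregate bound $\sum_{j}\epsilon_j\alpha_j/\sqrt{\ell_1\ell_2}\ge (n/\sqrt{\ell_1\ell_2})\eta$ displayed before the lemma (which uses $q\in\cP_1(L,s)$ and the smoothness $L/d^s$ term), and, as in \citet{neykov2020minimax}, combine these via Cauchy--Schwarz over bins to get $\E[T\mid\sigma,R]\gtrsim n\eta^2/\sqrt{\ell_1\ell_2}$. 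Using $\eta\ge\epsilon/4$ (valid once $\Zeta$ is large relative to $L$) and Condition~\eqref{eq:epsilon:condition:one} with $d,\ell_1,\ell_2,\epsilon$ instantiated through \eqref{eq:mrho-1}, this yields $\E[T\mid\sigma,R]\ge C_5\Zeta^2\sqrt d$ with $C_5$ depending on $L,s$.

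For the upper bound on the conditional variance, I would pass to the Hoeffding decomposition of each $U_j$, separating the degenerate component that survives under the null from the lower-order non-degenerate components carrying the signal. Under the null scaling the weighted degenerate component $\sigma_j\omega_jU_j$ has conditional variance of constant order (exactly as in \citet{neykov2020minimax}), so summing over the at most $d$ active bins contributes the $d$ term. Each lower-order component of bin $j$ is governed by the same local signal $\epsilon_j$ that drove the mean; summing these contributions and applying Cauchy--Schwarz against $\E[T\mid\sigma,R]$, while invoking Conditions~\eqref{eq:epsilon:condition:one}--\eqref{eq:epsilon:condition:four} to compare the resulting auxiliary quantities with $d$, $\sqrt d$, and $\E[T\mid\sigma,R]$, produces the $(\sqrt d+1)\E[T\mid\sigma,R]$ and $\E[T\mid\sigma,R]^{3/2}$ terms. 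Adding the three contributions gives $\Var[T\mid\sigma,R]\le C_6\big(d+(\sqrt d+1)\E[T\mid\sigma,R]+\E[T\mid\sigma,R]^{3/2}\big)$ with some $C_6$ depending on $L,s$.

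The main obstacle is the per-bin variance analysis of the weighted $U$-statistic $U_j$: one must control the mixed terms of its Hoeffding decomposition and, crucially, show that after the $\sigma_j^2\omega_j^2$ weighting and summation the non-degenerate part is dominated by $\E[T\mid\sigma,R]$ and its $3/2$ power rather than by something larger --- this is precisely where the algebraic relations \eqref{eq:epsilon:condition:one}--\eqref{eq:epsilon:condition:four} among $n,d,\ell_1,\ell_2,\eta,\Zeta$ are consumed, and re-deriving those relations from the choice \eqref{eq:mrho-1}--\eqref{eq:mrho-2} is the only genuinely new (though routine) computation. Everything else transfers unchanged from \citet{neykov2020minimax}, since the kernel of $U_j$, the weights $\omega_j$, and the Poissonised splitting are identical; only the binning resolution $d$ has been altered, and its sole effect is on the verification of \eqref{eq:epsilon:condition:one}--\eqref{eq:epsilon:condition:four}.
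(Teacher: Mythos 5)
Your proposal is correct and takes the same route as the paper: the paper's own ``proof'' simply defers to Lemma~C.10 of \citet{neykov2020minimax}, and your account --- the per-bin conditional-independence decomposition of $\E[T\mid\sigma,R]$ and $\Var[T\mid\sigma,R]$, the Hoeffding-type mean estimate via display (C.13) combined with Cauchy--Schwarz and $\eta\ge\epsilon/4$, and the degenerate/non-degenerate Hoeffding split of each $U_j$ for the variance, all compared against $d$ and $\E[T\mid\sigma,R]$ using Conditions~\eqref{eq:epsilon:condition:one}--\eqref{eq:epsilon:condition:four} --- is an accurate expansion of what that adaptation involves. The one minor bookkeeping slip is that Conditions~\eqref{eq:epsilon:condition:one}--\eqref{eq:epsilon:condition:four} are verified in the paper from \eqref{eq:mrho-1}--\eqref{eq:mrho-2} \emph{before} the lemma is stated, so inside the lemma's proof they are simply hypotheses to be invoked rather than relations to re-derive.
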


\begin{lemma}\label{lem:null_variance}
Suppose $\P_{X,Y,Z}\in\cP_0(L,s)$, and
\[\ell_1 \geq \ell_2
~~~~\text{and}~~~
d \ell_1 \lesssim n.
\]
Then with probability at least $1-\alpha/2$ we have for some absolute
constants $C_7,C_8$ depending on $L,s$ that
\[\E[T | \sigma, R] \leq \frac{C_7 n}{d^{2s}\sqrt{\ell_1\ell_2}}\] 
and 
\[
\Var[T | \sigma, R] \leq C_8(d + (\sqrt{d} + 1)\E[T | \sigma, R] + \E[T | \sigma, R]^{3/2}).
\]
\end{lemma}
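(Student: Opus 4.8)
The plan is to run the null-size analysis of \citet[Section~5.3, Appendix~C, Theorem~5.6]{neykov2020minimax} with our binning parameters $d=\ell_1=\ell_2=\lceil n^{2/(4s+3)}\rceil$, feeding in the H\"older bookkeeping through Lemma~\ref{lem:condmore}. First I would condition on $\sigma=\{\sigma_j\}_{j\in\zahl d}$ and $R=\{\{\cD_{j,Y'},\cD_{j,Z'}\}\}_{j\in\zahl d}$. Given $(\sigma,R)$, the residual blocks $\cD_{j,Y',Z'}$ are mutually independent across bins and, within bin $j$, are i.i.d.\ draws from $\P_{(Y',Z')\mid X\in\cC_j}$; hence $\E[T\mid\sigma,R]=\sum_{j}\ind(\sigma_j\ge4)\,\sigma_j\omega_j\,\E[U_j\mid\sigma,R]$ and $\Var[T\mid\sigma,R]=\sum_{j}\ind(\sigma_j\ge4)\,\sigma_j^2\omega_j^2\,\Var[U_j\mid\sigma,R]$, each summand depending only on the within-bin law. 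Since the marginal of $X$ is uniform on $[0,1]$ by Lemma~\ref{lem:tech7} (for the $\P_0$ of Example~\ref{ex:neybov} to which the lemma is applied; for a general $\P_0\in\cP_0(L,s)$ one uses that $q_X$ is bounded, with near-empty bins absorbed by the $\ind(\sigma_j\ge4)$ factor exactly as in \citet{neykov2020minimax}), the bin counts are multinomial with equal cell probabilities $1/d$, and a Bernstein tail bound together with the hypothesis $d\ell_1\lesssim n$ shows that on an event $\mathcal A$ with $\P(\mathcal A)\ge 1-\alpha/2$ one has $N\le n$ and $\sigma_j\asymp n/d\gtrsim\ell_1\ge\ell_2$ for every $j$; consequently $\omega_j\asymp\sqrt{\ell_1\ell_2}$ and $\sigma_j\omega_j\asymp(n/d)\sqrt{\ell_1\ell_2}$ uniformly in $j$ on $\mathcal A$.

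The second step is the bias bound for $\E[U_j\mid\sigma,R]$. Under $\P_{(X,Y,Z)}\in\cP_0(L,s)$ the within-bin law $\P_{(Y',Z')\mid X\in\cC_j}$ is a mixture over $x\in\cC_j$ of products of the discretized conditionals of $Y$ and of $Z$ given $X=x$; it is an exact product of its marginals precisely when those conditionals are constant in $x$, and the H\"older bounds $\lvert q_{Y\mid X}(\cdot\mid x)-q_{Y\mid X}(\cdot\mid x')\rvert\le L\lvert x-x'\rvert^s\le L/d^s$, the analogous bound for $Z$, and the bound $\lvert q_{Y\mid(X,Z)}(\cdot\mid x,z)-q_{Y\mid(X,Z)}(\cdot\mid x',z)\rvert\le L/d^s$ from Lemma~\ref{lem:condmore}\ref{lem:condmore1} control the departure from product: the relevant $d\times d$ cell-probability deviation matrix has entries of size $O(d^{-2}L/d^s)$, so the quadratic independence functional estimated by the (centered) weighted U-statistic $U_j$ of \citet[Eq.~(5.5)]{neykov2020minimax} has conditional mean of order $(L/d^s)^2$ in that normalization. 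This is the null analogue of the per-bin signal bound used on the alternative side. Summing over the $d$ bins on $\mathcal A$ gives $\E[T\mid\sigma,R]\lesssim C_7\,n/(d^{2s}\sqrt{\ell_1\ell_2})$ with $C_7$ depending only on $L,s$.

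For the conditional variance I would Hoeffding/ANOVA-decompose each $U_j$ and invoke the moment bounds of Lemmas~C.7--C.9 in \citet{neykov2020minimax}. Under the near-product within-bin law the completely degenerate (second-order) component dominates and, after the $\sigma_j^2\omega_j^2$ weighting, contributes $\Theta(1)$ per bin --- this is exactly what makes $T/\sqrt d$ nondegenerate under the null --- so these pieces sum to the ``$d$'' term; the first-order components have variance controlled by $\E[U_j\mid\sigma,R]$, and their cross-interaction with the degenerate part, together with the fourth-moment bound on the bounded U-statistic kernel, produce the $(\sqrt d+1)\E[T\mid\sigma,R]$ and $\E[T\mid\sigma,R]^{3/2}$ contributions. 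Collecting everything on $\mathcal A$ yields $\Var[T\mid\sigma,R]\lesssim C_8\big(d+(\sqrt d+1)\E[T\mid\sigma,R]+\E[T\mid\sigma,R]^{3/2}\big)$, as asserted.

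I expect the main obstacle to be the bias estimate of the second step: unlike the idealized situation in which $Y\indep Z$ holds exactly within each bin, the H\"older null forces $\E[T\mid\sigma,R]$ to be strictly positive, and it must be shown negligible against the null fluctuation scale $\sqrt d$, i.e.\ $n/(d^{2s}\sqrt{\ell_1\ell_2})=o(\sqrt d)$; the choice $d=\ell_1=\ell_2=\lceil n^{2/(4s+3)}\rceil$ is calibrated (for $s\ge 1/4$) precisely so that this holds simultaneously with the matching lower bound under the alternative in Lemma~\ref{lem:alter_variance}. Carrying the $L$- and $s$-dependence of every constant through the weighted-U-statistic moment formulas, and verifying that $\P(\mathcal A)\ge 1-\alpha/2$, are the remaining lengthy but routine steps.
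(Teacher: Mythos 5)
Your plan tracks the paper's own treatment, which is essentially a one-line deferral: the authors state that the proofs of Lemmas~\ref{lem:alter_variance} and \ref{lem:null_variance} ``closely follow that of Lemmas~C.10 and C.11 of \citet{neykov2020minimax},'' with the only change being the binning parameters $d=\ell_1=\ell_2=\lceil n^{2/(4s+3)}\rceil$. Your reconstruction --- condition on $(\sigma,R)$, use the multinomial concentration of the bin counts together with $d\ell_1\lesssim n$ to get $\sigma_j\gtrsim\ell_1\ge\ell_2$ and hence $\omega_j\asymp\sqrt{\ell_1\ell_2}$ on a high-probability event, control the null bias via the within-bin departure from a product distribution induced by the $L/d^s$ H\"older oscillation (with Lemma~\ref{lem:condmore}\ref{lem:condmore1} supplying the conditional H\"older bound), and then get the variance from the Hoeffding/ANOVA decomposition and the moment bounds of Lemmas~C.7--C.9 --- is exactly that program, spelled out.

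Two points to tighten. First, your closing comment that the null bias ``must be shown negligible against the null fluctuation scale $\sqrt d$, i.e.\ $n/(d^{2s}\sqrt{\ell_1\ell_2})=o(\sqrt d)$'' is not what happens. With $d=\ell_1=\ell_2=\lceil n^{2/(4s+3)}\rceil$ one has $n/(d^{2s}\sqrt{\ell_1\ell_2})\asymp\sqrt d$ exactly; this balance is the hypothesis of Lemma~\ref{lem:null:hypothesis} and is by design, not an accident to be avoided. The null bias is of the \emph{same} order as $\sqrt d$; the rejection threshold is $\zeta\sqrt d$ with $\zeta=C_{10}$ chosen large enough to dominate the null mean, and the separation from the alternative comes from taking $\Zeta$ large so that $C_9\Zeta^2\sqrt d\ge\zeta\sqrt d$. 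Second, the bias bookkeeping in your second paragraph is loose: the within-bin departure from a product has the form of a covariance of two quantities each varying by $O(L/d^s)$ over the bin, so it can be bounded either by the crude triangle-inequality estimate $O(L/d^s)$ (which is what is needed to land on $C_7 n/(d^{2s}\sqrt{\ell_1\ell_2})$ once carried through the $U_j\mapsto T$ normalization) or by the sharper second-order bound $O(L^2/d^{2s})$; the cell-probability entry size you state, $O(d^{-2}L/d^s)$, and the claimed per-bin mean ``of order $(L/d^s)^2$'' are not mutually consistent without being explicit about which normalization of $U_W$ is meant. Since the stated lemma is only an upper bound, either route suffices, but the exponents should be kept straight when writing the argument out.
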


The proofs of Lemmas~\ref{lem:alter_variance} and \ref{lem:null_variance} closely follow that of Lemmas~C.10 and C.11 of \citet{MR4319245}. Using Lemmas~\ref{lem:alter_variance} and \ref{lem:null_variance} yields the following results, which are the revised version of Lemmas~C.13 and C.12, respectively. 

\begin{lemma}\label{lem:alternative:hypothesis}
Suppose $\P_{X,Y,Z}\in\cP_1(\epsilon;L,s)$, where $\epsilon \geq  3{L}/{d^s}$ and  Conditions \eqref{eq:epsilon:condition:one}--\eqref{eq:epsilon:condition:four} hold.  Then for a small enough absolute constant $C_9$ depending on $L,s,\beta$ we have that
\[
\P\Big(T \leq C_9\Zeta^2\sqrt{ d}\Big) \leq \beta. 
\]
\end{lemma}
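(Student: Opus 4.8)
The plan is to derive Lemma~\ref{lem:alternative:hypothesis} from Lemma~\ref{lem:alter_variance} by a conditional second-moment (Chebyshev) argument, paralleling the proof of Lemma~C.13 in \citet{neykov2020minimax}. First I would condition on the pair $(\sigma, R)$ and let $\mathcal{A}$ be the $(\sigma,R)$-measurable event, supplied by Lemma~\ref{lem:alter_variance}, on which both
\[
\E[T \mid \sigma, R] \;\ge\; C_5\Zeta^2\sqrt{d}
\qquad\text{and}\qquad
\Var[T \mid \sigma, R] \;\le\; C_6\bigl(d + (\sqrt{d}+1)\,\E[T\mid\sigma,R] + \E[T\mid\sigma,R]^{3/2}\bigr)
\]
hold; Lemma~\ref{lem:alter_variance} guarantees $\P(\mathcal{A}) \ge 1 - \beta/2$.

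On the event $\mathcal{A}$ I would apply Chebyshev's inequality to $T$ conditionally on $(\sigma,R)$. Taking the absolute constant $C_9 \le C_5/2$ forces $C_9\Zeta^2\sqrt{d} \le \tfrac12\E[T\mid\sigma,R]$ on $\mathcal{A}$, so that
\[
\P\bigl(T \le C_9\Zeta^2\sqrt{d}\bigm|\sigma,R\bigr)
\;\le\; \P\Bigl(|T - \E[T\mid\sigma,R]| \ge \tfrac12\E[T\mid\sigma,R]\Bigm|\sigma,R\Bigr)
\;\le\; \frac{4\,\Var[T\mid\sigma,R]}{\E[T\mid\sigma,R]^2}.
\]
Substituting the variance bound and using $\E[T\mid\sigma,R] \ge C_5\Zeta^2\sqrt{d}$ together with $d = \lceil n^{2/(4s+3)}\rceil\to\infty$, the right-hand side is at most
\[
4C_6\Bigl(\frac{d}{\E[T\mid\sigma,R]^2} + \frac{\sqrt{d}+1}{\E[T\mid\sigma,R]} + \frac{1}{\sqrt{\E[T\mid\sigma,R]}}\Bigr)
\;\le\; 4C_6\Bigl(\frac{1}{C_5^2\Zeta^4} + \frac{2}{C_5\Zeta^2}\Bigr) + o(1),
\]
with $o(1)$ as $n\to\infty$. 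Since in the setting of Theorem~\ref{thm:neybov}\ref{thm:neybov2} the scale $\Zeta$ is taken large enough (depending on $\beta$), the displayed bound is $\le \beta/2$ once $n$ is large.

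Finally I would integrate over $(\sigma,R)$: on $\mathcal{A}$ the previous display shows $\P(T\le C_9\Zeta^2\sqrt{d}\mid\sigma,R)\le\beta/2$, hence $\P(\{T\le C_9\Zeta^2\sqrt{d}\}\cap\mathcal{A})\le\beta/2$, and since $\P(\mathcal{A}^c)\le\beta/2$,
\[
\P\bigl(T \le C_9\Zeta^2\sqrt{d}\bigr)
\;\le\; \P(\mathcal{A}^c) + \P\bigl(\{T \le C_9\Zeta^2\sqrt{d}\}\cap\mathcal{A}\bigr)
\;\le\; \frac{\beta}{2} + \frac{\beta}{2} = \beta,
\]
which is the claim; the exponentially small contribution of the Poisson truncation event $\{N>n\}$, on which the test is $0$ regardless of $T$, is absorbed harmlessly. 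The only genuinely delicate input is Lemma~\ref{lem:alter_variance} itself --- in particular the mean lower bound $\E[T\mid\sigma,R]\gtrsim\Zeta^2\sqrt{d}$, which is where the separation $\epsilon\ge 3L/d^s$ and Conditions \eqref{eq:epsilon:condition:one}--\eqref{eq:epsilon:condition:four} are used (and whose proof mirrors that of Lemma~C.10 in \citet{neykov2020minimax}); granting that lemma, the argument above is a routine variance estimate.
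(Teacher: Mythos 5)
Your proposal is correct and follows exactly the route the paper intends: the paper merely states that Lemma~\ref{lem:alternative:hypothesis} follows from Lemma~\ref{lem:alter_variance} by mirroring Lemma~C.13 of Neykov et al., and your conditional second-moment (Chebyshev) argument on the high-probability event supplied by Lemma~\ref{lem:alter_variance}, followed by integration over $(\sigma,R)$ and absorption of the negligible Poisson tail, is precisely that argument. Your observation that the bound $4C_6\bigl(\tfrac{1}{C_5^2\Zeta^4}+\tfrac{2}{C_5\Zeta^2}\bigr)+o(1)$ needs $\Zeta$ (and $n$) sufficiently large is accurate and consistent with the way the lemma is actually invoked in the proof of Theorem~\ref{thm:neybov}\ref{thm:neybov2}, which explicitly assumes $\Zeta$ large.
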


\begin{lemma}\label{lem:null:hypothesis} 
If $\P_{X,Y,Z} \in \cP_0(L,s)$ and 
\[
\frac{n}{d^{2s}\sqrt{\ell_1\ell_2}} \asymp \sqrt{d},
\]
then for a sufficiently large absolute constant $C_{10}$ depending on $L,s,\alpha$, we have
\[
\P\Big(T \geq  \frac{C_{10} n}{d^{2s}\sqrt{\ell_1\ell_2}}\Big) \leq \alpha.
\]
\end{lemma}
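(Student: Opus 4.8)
The plan is to combine the conditional mean and variance bounds supplied by Lemma~\ref{lem:null_variance} with Chebyshev's inequality applied conditionally on $(\sigma,R)$. Write $\mu:=n/(d^{2s}\sqrt{\ell_1\ell_2})$, so that the hypothesis reads $\mu\asymp\sqrt d$; fix absolute constants $0<c\le c'$ with $c\sqrt d\le\mu\le c'\sqrt d$. Applying Lemma~\ref{lem:null_variance} (whose remaining hypotheses $\ell_1\ge\ell_2$ and $d\ell_1\lesssim n$ hold for the choice $d=\ell_1=\ell_2=\lceil n^{2/(4s+3)}\rceil$, the latter using $s\ge1/4$), there is an event $\mathcal{A}$, measurable with respect to $(\sigma,R)$, with $\P(\mathcal{A})\ge 1-\alpha/2$, on which
\[
\E[T\mid\sigma,R]\le C_7\mu
\qquad\text{and}\qquad
\Var[T\mid\sigma,R]\le C_8\big(d+(\sqrt d+1)\E[T\mid\sigma,R]+\E[T\mid\sigma,R]^{3/2}\big).
\]
Since $d\ge1$ and $\E[T\mid\sigma,R]\le C_7c'\sqrt d$ on $\mathcal{A}$, the variance bound collapses on $\mathcal{A}$ to $\Var[T\mid\sigma,R]\le C_8'\,d$ for a constant $C_8'$ depending only on $L,s$.

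Next I would bound the conditional tail. Let $C_{10}>C_7$ be a constant to be chosen. On $\mathcal{A}$, the event $\{T\ge C_{10}\mu\}$ forces $T-\E[T\mid\sigma,R]\ge(C_{10}-C_7)\mu\ge(C_{10}-C_7)c\sqrt d$, so Chebyshev's inequality applied conditionally on $(\sigma,R)$ gives, on $\mathcal{A}$,
\[
\P\big(T\ge C_{10}\mu\mid\sigma,R\big)
\le\frac{\Var[T\mid\sigma,R]}{\big((C_{10}-C_7)\mu\big)^2}
\le\frac{C_8'\,d}{(C_{10}-C_7)^2c^2d}
=\frac{C_8'}{(C_{10}-C_7)^2c^2}.
\]
Choosing $C_{10}$ large enough, depending on $L,s,\alpha$ (here $C_7,C_8'$ depend on $L,s$ while $c$ is absolute), makes the right-hand side at most $\alpha/2$.

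Finally I would integrate out. Since $\P(\mathcal{A}^c)\le\alpha/2$ and the tail bound above holds pointwise on $\mathcal{A}$,
\[
\P\big(T\ge C_{10}\mu\big)\le\P(\mathcal{A}^c)+\E\Big[\ind_{\mathcal{A}}\,\P\big(T\ge C_{10}\mu\mid\sigma,R\big)\Big]\le\frac{\alpha}{2}+\frac{\alpha}{2}=\alpha,
\]
which is exactly the assertion, recalling $\mu=n/(d^{2s}\sqrt{\ell_1\ell_2})$. The argument is essentially routine and carries no serious obstacle; the only point that requires care is that the two-sided relation $\mu\asymp\sqrt d$ is genuinely used in both directions — the upper bound to reduce $\Var[T\mid\sigma,R]$ to order $d$, and the lower bound to keep the squared gap in the denominator of Chebyshev's inequality of order $d$ — so that the final ratio is an absolute constant that can be driven below $\alpha/2$ by enlarging $C_{10}$. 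No estimate beyond what Lemma~\ref{lem:null_variance} provides is needed.
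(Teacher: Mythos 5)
Your proof is correct and takes exactly the approach the paper intends: the paper omits an explicit argument, stating only that the result follows from Lemma~\ref{lem:null_variance} in the manner of Lemma~C.12 of \citet{neykov2020minimax}, and the conditional-Chebyshev-then-integrate argument you give is precisely that standard filling-in. Both directions of $n/(d^{2s}\sqrt{\ell_1\ell_2})\asymp\sqrt d$ are used as you note, and your observation that the hypotheses $\ell_1\ge\ell_2$ and $d\ell_1\lesssim n$ of Lemma~\ref{lem:null_variance} are met under the parameter choice $d=\ell_1=\ell_2=\lceil n^{2/(4s+3)}\rceil$ with $s\ge1/4$ is the right sanity check.
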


Finally, we choose $\zeta=C_{10}$ such that
\[C_9\Zeta^2\sqrt{ d}\ge \zeta\sqrt{ d} \asymp \frac{C_{10} n}{d^{2s}\sqrt{\ell_1\ell_2}}\]
for all sufficiently large $\Zeta$. The rest of the proof is analogous
to the steps in the proof of Theorems~5.6 and C.15 in
\citet{MR4319245}, and hence omitted. 
\end{proof}

\subsection{Proofs for Section~\ref{sec:main-proof}}

\subsubsection{Proof of Lemma~\ref{lem:tech4}}

\begin{proof}[Proof of Lemma~\ref{lem:tech4}]
Using Lemma~D.1, a H\'ajek representation theorem, in \citet{deb2020kernel}, we have
\begin{align*}
&n^{-2}\sum_{i=1}^{n}\min(R_i,R_{M(i)}) -
\{n^2(n-1)\}^{-1}\sum_{i\ne j}\min(R_i,R_{j})\\
&=n^{-1}\sum_{i=1}^{n}\min\{F_{Y}(Y_i),F_{Y}(Y_{M(i)})\} -
\{n(n-1)\}^{-1}\sum_{i\ne j}\min\{F_{Y}(Y_i),F_{Y}(Y_j)\}+o_{\P}(1),
\yestag\label{eq:hajek1}
\end{align*}
and, similarly,
\begin{align*}
&n^{-2}\sum_{i=1}^{n}\min(R_i,R_{N(i)}) - 
\{n^2(n-1)\}^{-1}\sum_{i\ne j}\min(R_i,R_{j})\\
&=n^{-1}\sum_{i=1}^{n}\min\{F_{Y}(Y_i),F_{Y}(Y_{N(i)})\} -
\{n(n-1)\}^{-1}\sum_{i\ne j}\min\{F_{Y}(Y_i),F_{Y}(Y_j)\}+o_{\P}(1).
\yestag\label{eq:hajek2}
\end{align*}
Combining \eqref{eq:hajek1} and \eqref{eq:hajek2} yields the desired result. 
\end{proof}

\subsubsection{Proof of Lemma~\ref{lem:tech1}}

\begin{proof}[Proof of Lemma~\ref{lem:tech1}]
Let us construct an undirected graph $\cG^{\rm Dep}_n$ depending on $\cG_n$ as follows: For any $i\ne j$, we connect vertices $i$ and $j$ in $\cG^{\rm Dep}_n$ if and only if there is a path of length $1$ or $2$ joining $(\mX_i,\mZ_i)$ and $(\mX_j,\mZ_j)$ in $\cG_n$
or in $\cG^{\mX}_n$.
As illustrated in the Proof of Theorem~4.1 in \citet[Appendix~C.6]{deb2020kernel}, $\cG^{\rm Dep}_n$ is a dependency graph with maximum degree $\lesssim (\kC_{p+q}+\kC_{p})^2$, where $\kC_{p}\le \kC_{p+q}$ by the definition of $\kC_{p}$ as given in Lemma~\ref{lem:MD}. 
Applying the Berry--Esseen theorem for dependency graphs \citep[Theorem~2.7]{MR2073183} to $\cG^{\rm Dep}_n$ yields the desired result \eqref{eq:BE-no}. 
\end{proof}

\subsubsection{Proof of Lemma~\ref{lem:tech5+2}}

\begin{proof}[Proof of Lemma~\ref{lem:tech5+2}]
We first prove 
\[
\sum_{i=1}^{n}\E(V_i^2\given \cF_n)
\stackrel{\sf a.s.}{\longrightarrow}
\sum_{i=1}^{n}\E(V_i^2).
\yestag\label{eq:ES1-1}
\]
We only prove
\[
n^{-1}\sum_{\substack{(i,j)~\text{distinct} \\ i\to j\in\cE(\cG_n)\cap \cE(\cG^{\mX}_n)}}1
\stackrel{\sf a.s.}{\longrightarrow}
\E\Big\{n^{-1}\sum_{\substack{(i,j)~\text{distinct} \\ i\to j\in\cE(\cG_n)\cap \cE(\cG^{\mX}_n)}}1
\Big\},
\yestag\label{eq:ES1eg-no}
\]
and the other summands in \eqref{eq:Gamma1-no} can be handled similarly. 
Define 
\[
U_n\equiv U_n\Big(\big[(\mX_i,\mZ_i)\big]_{i=1}^{n}\Big):=
n^{-1}\sum_{\substack{(i,j)~\text{distinct} \\ i\to j\in\cE(\cG_n)\cap \cE(\cG^{\mX}_n)}}1.
\]
Let $(\bm{\tX}_1,\bm{\tZ}_1),\ldots,(\bm{\tX}_n,\bm{\tZ}_n)$ be independent copies 
of $(\mX_1,\mZ_1),\ldots,(\mX_n,\mZ_n)$. 
Set $\mX_i^{(j)}:=\mX_i$ if $i\neq j$ and $\mX_i^{(j)}:=\bm{\tX}_i$ if $i=j$, 
$\mZ_i^{(j)}:=\mZ_i$ if $i\neq j$ and $\mZ_i^{(j)}:=\bm{\tZ}_i$ if $i=j$, 
and 
\[
U_n^{(j)}:= U_n\Big(\big[(\mX_i^{(j)},\mZ_i^{(j)})\big]_{i=1}^{n}\Big).
\]
In view of Proof of Proposition~3.2(ii) in
\citet[Appendix~C.3]{deb2020kernel}, there exists a constant $C_{p+q;p}$ depending only on $\kC_{p+q}$ and $\kC_{p}$ such that
\[
\Big\lvert U_n-U_n^{(j)}\Big\rvert\le \frac{C_{p+q;p}}{n}.
\]
Using a generalized Efron--Stein inequality \citep[Theorem~2]{MR2123200} with $q = 4$ and Jensen's inequality, we have
\begin{align*}
\sum_{n=1}^{\infty}\E \Big[\Big\lvert U_n-\E(U_n)\Big\rvert^4\Big]
&\le \kappa_4^4\sum_{n=1}^{\infty} \E\Big[\Big\lvert\E\Big\{\sum_{j=1}^{n}\Big(U_n-U_n^{(j)}\Big)^2\Biggiven\cF_n\Big\}\Big\rvert^2\Big]\\
&\le \kappa_4^4\sum_{n=1}^{\infty} \E\Big[\Big\lvert\sum_{j=1}^{n}\Big(U_n-U_n^{(j)}\Big)^2\Big\rvert^2\Big]
 \le \kappa_4^4\sum_{n=1}^{\infty} \frac{C^{4}_{p+q;p}}{n^2}
 <\infty. 
\end{align*}
Combining Markov's inequality and the Borel--Cantelli lemma yields \eqref{eq:ES1eg-no}.

Next we prove 
\[
\sum_{i\ne j}\E(V_iV_j\given \cF_n)
\stackrel{\sf a.s.}{\longrightarrow}
\sum_{i\ne j}\E(V_iV_j).
\yestag\label{eq:ES1-2}
\] 
We only prove
\[
n^{-1}\sum_{\substack{(i,j)~\text{distinct} \\ j\to i\in \cE(\cG_n)}}
\gamma_{4;a,b}^*(\mX_i,\mZ_i)
\stackrel{\sf a.s.}{\longrightarrow}
\E\Big\{n^{-1}\sum_{\substack{(i,j)~\text{distinct} \\ j\to i\in \cE(\cG_n)}}
\gamma_{4;a,b}^*(\mX_i,\mZ_i)\Big\},
\yestag\label{eq:ES1eg}
\]
and the other summands in \eqref{eq:Gamma2-no} can be handled similarly.
Define 
\[
T_n\equiv T_n\Big(\big[(\mX_i,\mZ_i)\big]_{i=1}^{n}\Big):=
n^{-1}\sum_{\substack{(i,j)~\text{distinct} \\ j\to i\in \cE(\cG_n)}}
\gamma_{4;a,b}^*(\mX_i,\mZ_i).
\]
In view of Proof of Proposition~3.2(ii) in
\citet[Appendix~C.3]{deb2020kernel}, there exists a constant $C_{p+q}$ depending only on $\kC_{p+q}$ such that
\[
\lvert T_n-T_n^{(j)}\rvert\le \frac{C_{p+q}}{2n}\Big\{\max_{1\le i\le n}\Big\lvert\gamma_{4;a,b}^*(\mX_i,\mZ_i)\Big\rvert+\max_{1\le i\le n}\Big\lvert\gamma_{4;a,b}^*(\mX_i^{(j)},\mZ_i^{(j)})\Big\rvert\Big\}.
\]
Using a generalized Efron--Stein inequality \citep[Theorem~2]{MR2123200} with $q = 4$ and Jensen's inequality, we have
\begin{align*}
 \sum_{n=1}^{\infty}\E \Big[\Big\lvert T_n-\E(T_n)\Big\rvert^4\Big]
&\le\kappa_4^4\sum_{n=1}^{\infty} \E\Big[\Big\lvert\E\Big\{\sum_{j=1}^{n}\Big(T_n-T_n^{(j)}\Big)^2\Biggiven\cF_n\Big\}\Big\rvert^2\Big]
 \le\kappa_4^4\sum_{n=1}^{\infty} \E\Big[\Big\lvert\sum_{j=1}^{n}\Big(T_n-T_n^{(j)}\Big)^2\Big\rvert^2\Big]\\
&\le\kappa_4^4\sum_{n=1}^{\infty} \frac{C^{4}_{p+q}}{n^2}\E\Big\{\max_{1\le i\le n}\Big\lvert\gamma_{4;a,b}^*(\mX_i,\mZ_i)\Big\rvert^4\Big\}
 <\infty,
\end{align*}
where the last step applies bounds on the expectation of the maximum of random variables (see \citet{gilstein1981bounds} or \citet[Theorem~3]{MR813239})
to Assumption~\ref{asp:only}\ref{asp:5}. Combining Markov's inequality and the Borel--Cantelli lemma yields \eqref{eq:ES1eg}. 
Putting \eqref{eq:ES1-1} and \eqref{eq:ES1-2} together yields \eqref{eq:ES1-no}. 
\end{proof}

\subsubsection{Proof of Lemma~\ref{lem:tech6}}

\begin{proof}[Proof of Lemma~\ref{lem:tech6}]
We will follow the ideas of Proof of Theorem~2 in \citet{MR937563} and that of Theorem~1.4 in \citet{MR914597}.

{\bf Claim \eqref{eq:kr}.}
It suffices to show that
\[
\limsup_{n\to\infty}\E\Big(n^{-1}
\sum_{\substack{(i,j)~\text{distinct} \\ i\to j\in\cE(\cG_n)\cap \cE(\cG^{\mX}_n)}}1\Big)=0.
\]
We have $i\to j\in\cE(\cG_n)\cap \cE(\cG^{\mX}_n)$ if and only if the set
$S^*(\mW_i,\mW_j)$
contains no point in $[\mW_k]_{k=1}^n$ other than $\mW_i,\mW_j$, where
\[
S^*(\mw_1,\mw_2):=
\Big\{
\mw:\lVert \mw - \mw_1\rVert < \lVert \mw_2 - \mw_1\rVert
\Big\}
\;\cup\;
\Big\{
(\mx,\mz):\lVert \mx - \mx_1\rVert < \lVert \mx_2 - \mx_1\rVert
\Big\},
\]
and $\mx_i$ is the sub-vector consisting of the first $p$ elements of $\mw_i$, and 
$\mz_i$ is the sub-vector consisting of the last $q$ elements of $\mw_i$. 

For every $\epsilon,\delta>0$, we can partition $\R^{p+q}$ into a set
$G_{\epsilon,\delta}$ and its complement $H_{\epsilon,\delta}$, where $G_{\epsilon,\delta}$ is the collection of all $\mw_1$ for which $\lVert\mw_1\rVert\le1/\delta$ and
\[
\Big\lvert f(\mw_2)-f(\mw_1)\Big\lvert\le \epsilon f(\mw_1)
\]
for all $\lVert\mw_2-\mw_1\rVert\le\delta$. 
By the continuity of density $f$, it is possible to find $\delta > 0$ depending upon $\epsilon$, such that 
\[
\mu(H_{\epsilon,\delta})<\epsilon
~~~~\text{and}~~~~
G_{\epsilon,\delta}=\{\mw_1:\lVert\mw_1\rVert\le1/\delta\}.
\yestag\label{eq:BadGood}
\]
We pick $\delta$ in this manner, and write $G_{\epsilon}$ and $H_{\epsilon}$ hereafter if no confusion arises.

The data points are partitioned into two sets, according to membership
in $G_{\epsilon}$, or its complement. The number of edges in
$\cE(\cG_n)\cap \cE(\cG^{\mX}_n)$ can be written as $N_{G_\epsilon}+
N_{H_\epsilon}$, where $N_{G_\epsilon}$ refers to the edges in which
both vertices are in $G_{\epsilon}$, and $N_{H_\epsilon}$  refers to
the other edges. The expected number of $N_{H_\epsilon}$ is bounded by
$2\kC_{p+q}$ times the $\text{expected number of vertices in }H_{\epsilon}$ (because every vertex has degree no larger than $2\kC_{p+q}$). Then 
\[\E(n^{-1}N_{H_\epsilon})\le 2\kC_{p+q}\mu(H_{\epsilon}) <2\kC_{p+q}\epsilon.\] 

Recall that $B(\mw_1,r)$ denotes the ball of radius $r$ centered at $\mw_1$, 
and $\lambda(\cdot)$ denotes the Lebesgue measure. 
We let
$\mu(\cdot)$ denote the probability measure of a set, i.e., the integral of $f$ over the set, 
and $V_d$ is the volume of the unit ball in $\R^d$. 
Define
\begin{align*}
S(\mw_1,\mw_2;\Theta)&:=
\Big\{
(\mx,\mz):\lVert \mx - \mx_1\rVert_{} < \lVert \mx_2 - \mx_1\rVert_{}
,~
\lVert \mz - \mz_1\rVert_{} < \Theta\lVert \mw_2 - \mw_1\rVert_{}
\Big\}
\end{align*}
for $\Theta>0$. It is clear that $$B(\mw_1,\lVert\mw_2-\mw_1\rVert_{}),S(\mw_1,\mw_2;\Theta)\;\subseteq\; S^*(\mw_1,\mw_2).$$ 

For any fixed and arbitrarily small $\epsilon>0$, fix a $\delta>0$ such that \eqref{eq:BadGood} holds, and consider $N_{G_\epsilon}$ (the edges in which both vertices are in $G_{\epsilon}$). 
In what follows, we write 
$r_{12}:=\lVert\mw_2-\mw_1\rVert_{}$, 
$r_{12}^{\mx}:=\lVert\mx_2-\mx_1\rVert_{}$, and 
$r_{12}^{\mz}:=\lVert\mz_2-\mz_1\rVert_{}$,
where $\mw_i=(\mx_i,\mz_i)$ for simplicity. 
We observe that
\begin{align*}
\E(n^{-1}N_{G_\epsilon})
&\le (n-1)\iint_{\mw_1,\mw_2\in G_{\epsilon}}\exp[-(n-2)\mu\{S^*(\mw_1,\mw_2)\}]f(\mw_1)f(\mw_2)\d\mw_2\d\mw_1\\
&\le (n-1)\iint_{\mw_1,\mw_2\in G_{\epsilon}: r_{12}\le\delta_n}\exp[-(n-2)\mu\{S^*(\mw_1,\mw_2)\}]f(\mw_1)f(\mw_2)\d\mw_2\d\mw_1\\
&\qquad +(n-1)\iint_{\mw_1,\mw_2\in G_{\epsilon}: r_{12}>\delta_n}\exp[-(n-2)\mu\{S^*(\mw_1,\mw_2)\}]f(\mw_1)f(\mw_2)\d\mw_2\d\mw_1\\
&\le (n-1)\iint_{\substack{\mw_1,\mw_2\in G_{\epsilon}: r_{12}\le\delta_n,\\ r_{12}^{\mx}\le\theta_n r_{12}}}\exp[-(n-2)\mu\{S^*(\mw_1,\mw_2)\}]f(\mw_1)f(\mw_2)\d\mw_2\d\mw_1\\
&\qquad +(n-1)\iint_{\substack{\mw_1,\mw_2\in G_{\epsilon}: r_{12}\le\delta_n,\\ r_{12}^{\mx}>\theta_n r_{12}}}\exp[-(n-2)\mu\{S^*(\mw_1,\mw_2)\}]f(\mw_1)f(\mw_2)\d\mw_2\d\mw_1\\
&\qquad +(n-1)\iint_{\mw_1,\mw_2\in G_{\epsilon}: r_{12}>\delta_n}\exp[-(n-2)\mu\{S^*(\mw_1,\mw_2)\}]f(\mw_1)f(\mw_2)\d\mw_2\d\mw_1\\
&=: I+I\!I+I\!I\!I,~~~~\text{say,}
\yestag\label{eq:san}
\end{align*}
where $\delta_n,\theta_n$ will be specified later. 

As long as $(1+\theta_n)\delta_n<\delta$, 
\begin{align*}
I
&\le (n-1)\iint_{\substack{\mw_1,\mw_2\in G_{\epsilon}: r_{12}\le\delta_n,\\ r_{12}^{\mx}\le\theta_n r_{12}}}\exp[-(n-2)\mu\{B(\mw_1, r_{12})\}]f(\mw_1)f(\mw_2)\d\mw_2\d\mw_1\\
&\le (n-1)\iint_{\substack{\mw_1,\mw_2\in G_{\epsilon}: r_{12}\le\delta_n,\\ r_{12}^{\mx}\le\theta_n r_{12}}}\exp\{-(n-2)(1-\epsilon)f(\mw_1)V_{p+q} r_{12}^{p+q}\}f(\mw_1)f(\mw_2)\d\mw_2\d\mw_1\\
&\le \Big[(n-1)\iint_{\substack{\mw_1,\mw_2\in G_{\epsilon}: r_{12}\le\delta_n,\\ r_{12}^{\mx}\le\theta_n r_{12}}}\exp\{-2(n-2)(1-\epsilon)f(\mw_1)V_{p+q} r_{12}^{p+q}\}f(\mw_1)f(\mw_2)\d\mw_2\d\mw_1\Big]^{1/2}\\
&\qquad\Big[(n-1)\iint_{\substack{\mw_1,\mw_2\in G_{\epsilon}: r_{12}\le\delta_n,\\ r_{12}^{\mx}\le\theta_n r_{12}}}1 f(\mw_1)f(\mw_2)\d\mw_2\d\mw_1\Big]^{1/2}\\
&\le \Big[(n-1)\iint_{\substack{\mw_1,\mw_2\in G_{\epsilon}: r_{12}\le\delta_n}}\exp\{-2(n-2)(1-\epsilon)f(\mw_1)V_{p+q} r_{12}^{p+q}\}f(\mw_1)f(\mw_2)\d\mw_2\d\mw_1\Big]^{1/2}\\
&\qquad\Big[(n-1)\int_{\mw_1\in G_\epsilon} \mu(\{\mw_2=(\mx_2,\mz_2): r_{12}\le\delta_n,~ r_{12}^{\mx}\le\theta_n\delta_n\})f(\mw_1)\d\mw_1\Big]^{1/2}\\
&\le \Big[(n-1)\iint_{\substack{\mw_1,\mw_2\in G_{\epsilon}: r_{12}\le\delta}}\exp\{-2(n-2)(1-\epsilon)f(\mw_1)V_{p+q} r_{12}^{p+q}\}f(\mw_1)f(\mw_2)\d\mw_2\d\mw_1\Big]^{1/2}\\
&\qquad\Big[(n-1)\int_{\mw_1\in G_\epsilon} \mu(\{\mw_2=(\mx_2,\mz_2): r_{12}^{\mz}\le\delta_n,~ r_{12}^{\mx}\le\theta_n\delta_n\})f(\mw_1)\d\mw_1\Big]^{1/2}\\
&\le \Big[o(1)+\frac{1+\epsilon}{1-\epsilon}\frac{(n-1)}{2(n-2)}\Big]^{1/2}
\Big[(n-1)(1+\epsilon)\max_{\mw_1\in G_\epsilon}f(\mw_1)V_q(\delta_n)^q V_p(\theta_n\delta_n)^p \Big]^{1/2},
\end{align*}
where in the last step, the first term is covered by the Proof of
Theorem~2 in \citet{MR937563}, and the latter term is handled by the fact $r_{12}\le r_{12}^{\mx}+r_{12}^{\mz}=(1+\theta_n)\delta_n<\delta$. 

For the second summand in \eqref{eq:san}, if $0<\Theta_n<\delta/\delta_n-1$ and $\theta_n^{p}\Theta_n^{q}=\Omega$ for some constant $\Omega>0$, then
\begin{align*}
I\!I
&\le (n-1)\iint_{\substack{\mw_1,\mw_2\in G_{\epsilon}: r_{12}\le\delta_n,\\ r_{12}^{\mx}>\theta_n r_{12}}}\exp[-(n-2)\mu\{S(\mw_1,\mw_2;\Theta_n)\}]f(\mw_1)f(\mw_2)\d\mw_2\d\mw_1\\
&\le (n-1)\iint_{\substack{\mw_1,\mw_2\in G_{\epsilon}: r_{12}\le\delta_n,\\ r_{12}^{\mx}>\theta_n r_{12}}}\exp\{-(n-2)(1-\epsilon)f(\mw_1)\\
&\mkern300mu V_p(\theta_n r_{12})^pV_q(\Theta_n r_{12})^q\}f(\mw_1)f(\mw_2)\d\mw_2\d\mw_1\\
&= (n-1)\iint_{\substack{\mw_1,\mw_2\in G_{\epsilon}: r_{12}\le\delta_n,\\ r_{12}^{\mx}>\theta_n r_{12}}}\exp\{-(n-2)(1-\epsilon)f(\mw_1)\Omega V_pV_q r_{12}^{p+q}\}f(\mw_1)f(\mw_2)\d\mw_2\d\mw_1\\
&\le (n-1)\iint_{\substack{\mw_1,\mw_2\in G_{\epsilon}: r_{12}\le\delta_n}}\exp\{-(n-2)(1-\epsilon)f(\mw_1)\Omega V_pV_q r_{12}^{p+q}\}f(\mw_1)f(\mw_2)\d\mw_2\d\mw_1\\
&\le (n-1)\iint_{\substack{\mw_1,\mw_2\in G_{\epsilon}: r_{12}\le\delta}}\exp\{-(n-2)(1-\epsilon)f(\mw_1)\Omega V_pV_q r_{12}^{p+q}\}f(\mw_1)f(\mw_2)\d\mw_2\d\mw_1\\
&\le o(1)+\frac{1+\epsilon}{1-\epsilon}\frac{(n-1)}{(n-2)}\frac{V_{p+q}}{\Omega V_pV_q},
\end{align*}
where the last step is  by the Proof of Theorem~2 in \citet{MR937563}.

Lastly, if $n(\delta_n)^{p+q}\ge a n^{b}$ for some constants $a,b>0$ and all sufficiently large $n$, then for all $n$ large enough,
\begin{align*}
I\!I\!I
&\le(n-1)\iint_{\mw_1,\mw_2\in G_{\epsilon}: r_{12}>\delta_n}\exp[-(n-2)\mu\{B(\mw_1, r_{12})\}]f(\mw_1)f(\mw_2)\d\mw_2\d\mw_1\\
&\le(n-1)\iint_{\mw_1,\mw_2\in G_{\epsilon}: r_{12}>\delta_n}\exp\{-(n-2)(1-\epsilon)f(\mw_1)V_{p+q}(\delta_n)^{p+q}\}f(\mw_1)f(\mw_2)\d\mw_2\d\mw_1\\
&\le(n-1)\int_{\mw_1\in G_{\epsilon}}\exp\{-(n-2)(1-\epsilon)f(\mw_1)V_{p+q}(\delta_n)^{p+q}\}f(\mw_1)\d\mw_1\\
&\le o(1).
\end{align*}
Here the proof of the last step is similar to that of Theorem~2 in \citet{MR937563}.

Taking
$\delta_n$, $\theta_n$, and $\Theta_n$ 
such that
\[
\delta_n^{p+3q/2}=n^{-1},
~~~~
\theta_n^{p}=\epsilon^{-1}\delta_n^{q},
~~~~\text{and}~~~~
\Theta_n=\delta/(2\delta_n),
\]
we deduce the result.

{\bf Claim \eqref{eq:ks}.}
It suffices to show that
\[
\limsup_{n\to\infty}\E\Big(n^{-1}
\sum_{\substack{(i,j)~\text{distinct} \\ i\to j\in \cE(\cG_n), j\to i\in \cE(\cG^{\mX}_n)}}1\Big)=0.
\]
We have $i\to j\in\cE(\cG_n)$ and $j\to i\in\cE(\cG^{\mX}_n)$ if and only if the set
$S^{**}(\mW_i,\mW_j)$
contains no point in $[\mW_k]_{k=1}^n$ other than $\mW_i,\mW_j$, where
\[
S^{**}(\mw_1,\mw_2):=
\Big\{
\mw:\lVert \mw - \mw_1\rVert_{} < \lVert \mw_2 - \mw_1\rVert_{}
\Big\}
\bigcup
\Big\{
(\mx,\mz):\lVert \mx - \mx_2\rVert_{} < \lVert \mx_2 - \mx_1\rVert_{}
\Big\},
\]
and $\mx_i$ is the sub-vector consisting of the first $p$ elements of $\mw_i$,  and
$\mz_i$ is the sub-vector consisting of the last $q$ elements of $\mw_i$. 
The rest of the proof will be in analogy to that of Claim I.

{\bf Claim \eqref{eq:kt}.}
We have
\begin{align*}
&\E\Big(n^{-1}
\sum_{\substack{(i,j,k)~\text{distinct} \\ i\to k\in \cE(\cG_n), j\to k\in \cE(\cG^{\mX}_n)}}1\Big)\\
&=\E\Big(n^{-1}
\sum_{\substack{(i,j,k)~ i\ne k, j\ne k \\ i\to k\in \cE(\cG_n), j\to k\in \cE(\cG^{\mX}_n)}}1\Big)
-\E\Big(n^{-1}
\sum_{\substack{(i,k)~\text{distinct} \\ i\to k\in \cE(\cG_n)\cap \cE(\cG^{\mX}_n)}}1\Big)\\
&=\E\Big(n^{-1}\sum_{j=1}^{n}d_j^{-}d_j^{\mX-}\Big)
-\E\Big(n^{-1}
\sum_{\substack{(i,k)~\text{distinct} \\ i\to k\in \cE(\cG_n)\cap \cE(\cG^{\mX}_n)}}1\Big)
\end{align*}
where $d^{-}_j := \sum_{i:i\to j\in \cE(\cG_n)} 1$ is the in-degree of
vertex $(\mX_j,\mZ_j)$ in $\cG_n$, and
$d^{\mX-}_j$ is the in-degree of vertex $\mX_j$ in $\cG^{\mX}_n$. 
It suffices to show that
\[
\lim_{n\to\infty}\P(d^{-}_j=k,d^{\mX-}_j=\ell)=\kp_{p+q,k}\kp_{p,\ell}
\]
for all fixed $j$, where $\kp_{d;k}$ is as defined in Lemma~\ref{lem:henze}. 
For the sake of notational simplicity, let $\mW_0=(\mX_0,\mZ_0)$
be an additional random variable with density $f$, independent of
$\mW_1,\dots,\mW_n$.  Let $A_j$ denote the event that $\mW_0$ is the nearest neighbor of $\mW_j$, 
and let $B_j$ denote the event that $\mX_0$ the nearest neighbor of
$\mX_j$, $1\le j\le n$.  Put
\[
d^{-}_0=\sum_{j=1}^{n}\ind(A_j)~~~~\text{and}~~~~
d^{\mX-}_0=\sum_{j=1}^{n}\ind(B_j).
\]
Jordan's formula, generalized to two systems of events, yields that
\[
\P(d^{-}_0=k,d^{\mX-}_0=\ell)
=\sum_{u=0}^{\kC_{p+q}-k}\sum_{v=0}^{\kC_p-\ell}(-1)^{u+v}\binom{u+k}{k}\binom{v+\ell}{\ell}\beta_{u+k,v+\ell},
\]
for $0\le k\le\kC_{p+q}$, $0\le\ell\le\kC_p$, 
where
\begin{align*}
&\beta_{0,0}=1,~~~
\beta_{r,0}=\sum_{1\le i_1<\cdots<i_{r}\le n}\P\Big(A_{i_1}\cap\cdots\cap A_{i_r}\Big),~~~
\beta_{0,s}=\sum_{1\le j_1<\cdots<j_{s}\le n}\P\Big(B_{j_1}\cap\cdots\cap B_{j_s}\Big),\\
&\beta_{r,s}=\sum_{1\le i_1<\cdots<i_{r}\le n}\sum_{1\le j_1<\cdots<j_{s}\le n}\P\Big(A_{i_1}\cap\cdots\cap A_{i_r}\cap B_{j_1}\cap\cdots\cap B_{j_s}\Big),
\end{align*}
for $r,s\ge1$. 
We have $\beta_{r,0}\to \kd_{p+q,r}/r!$ and $\beta_{0,s}\to \kd_{p,s}/s!$ by (3.1) in \citet{MR914597}. 
Using symmetry 
gives
\[
\beta_{r,s}=\sum_{t=0}^{\min(r,s)}\binom{n}{t}\binom{n-t}{r-t}\binom{n-r}{s-t}\P\Big(A_{1}\cap\cdots\cap A_{r}\cap B_{1}\cap\cdots\cap B_{t}\cap B_{r+1}\cdots\cap B_{r+s-t}\Big).
\]
It remains to prove
\[
n^{r+s}\P\Big(A_{1}\cap\cdots\cap A_{r}\cap B_{r+1}\cdots\cap B_{r+s}\Big)\to \kd_{p+q,r}\kd_{p,s}
\]
and
\[
n^{r+s-t}\P\Big(A_{1}\cap\cdots\cap A_{r}\cap B_{1}\cap\cdots\cap B_{t}\cap B_{r+1}\cdots\cap B_{r+s-t}\Big)\to 0
\]
for all $1\le t\le \min(r,s)$. Straightforward arguments paralleling
those of (3.9) and (3.12) in \citet{MR914597} yield these desired results. 
Finally,
\[
\E\Big(n^{-1}\sum_{j=1}^{n}d_j^{-}d_j^{\mX-}\Big)
=\E[d_1^{-}d_1^{\mX-}]
\to\sum_{k=0}^{\kC_{p+q}}\sum_{\ell=0}^{\kC_p}k\ell \kp_{p+q,k}\kp_{p,\ell}
=\sum_{k=0}^{\kC_{p+q}}k\kp_{d,k}\sum_{\ell=0}^{\kC_p}\ell\kp_{p,\ell}
=1.
\]
Combining with Claim \eqref{eq:kr} concludes the proof. 
\end{proof}

\subsection{Proofs for the supplement}

\subsubsection{Proof of Lemma~\ref{lem:tech7}}

\begin{proof}[Proof of Lemma~\ref{lem:tech7}]
It is clear the marginal of $(X,Y)$ is uniformly distributed on $[0, 1]^2$ for $\P_{X,Y,Z}=\P_0$. When $\P_{X,Y,Z}=\P_{1,n}$, we have for any fixed sequence $\nu:=[\nu_k]_{k \in \zahl{m}}$ and $\delta=[\delta_{ij}]_{i,j \in \zahl{m'}}$, where $\nu_k,\delta_{ij}\in\{-1,+1\}$, 
\begin{align*}
  q_{X, Y}(x, y) 
&= \int_{0}^{1} q_{X,Y,Z}(x,y,z) \d z 
 = 1 +  \eta_{\rho,m}(x)\int_{0}^{1} \gamma_{\rho,m'}(y,z) \d z \\
&= 1 +  \eta_{\rho,m}(x)\times\rho^2 \sum_{i \in \zahl{m'}}\sum_{j \in \zahl{m'}} \delta_{ij} h_{i, m'}(y)\Big\{\int_{0}^{1} h_{j, m'}(z)\d z\Big\} 
 = 1.
\yestag\label{eq:jointxy}
\end{align*}
Taking expectation over all Rademacher sequences $\nu$ and $\delta$ completes the proof.
\end{proof}

\subsubsection{Proof of Lemma~\ref{lem:tech8}}

\begin{proof}[Proof of Lemma~\ref{lem:tech8}]
Recall that
\[
\xi=1-\frac{\int\E[\Var\{\ind(Y\ge y)\given Z,X\}]\d \P_{Y}(y)}
           {\int\E[\Var\{\ind(Y\ge y)\given   X\}]\d \P_{Y}(y)}.
\yestag\label{eq:Xizhi0}
\]
We first compute $\xi$ for any fixed sequence $\nu:=[\nu_k]_{k \in \zahl{m}}$ and $\delta=[\delta_{ij}]_{i,j \in \zahl{m'}}$, where $\nu_k,\delta_{ij}\in\{-1,+1\}$. 
Here, we have $q_{Y\given X}(y\given x)=1$ by \eqref{eq:jointxy}. 
We also have $q_{X,Z}(x,z)=1$ similarly as for \eqref{eq:jointxy}, and thus
\[
q_{Y\given Z,X}(y\given z,x) 
 = 1 + \gamma_{\rho,m'}(y,z)\eta_{\rho,m}(x).
\]
Accordingly, we obtain
\begin{align*}
&\int\E[\Var\{\ind(Y\ge y)\given X\}]\d \P_{Y}(y)\\
&=\int\E[\Var\{\ind(Y\le y)\given X\}]\d \P_{Y}(y)\\
&=\int\E[\P(Y\le y\given X) - \{\P(Y\le y\given X)\}^2]\d u_{Y}(y)\\
&=\int_{0}^{1}\int_{0}^{1}\Big[\int_{0}^{y}q_{Y\given X}(t\given x)\d t - \Big\{\int_{0}^{y}q_{Y\given X}(t\given x)\d t\Big\}^2\Big]\d x\d y\\
&=\int_{0}^{1}\int_{0}^{1}\Big(y - y^2\Big)\d x\d y=\frac12-\frac13=\frac16.\yestag\label{eq:Xizhi1}
\end{align*}
We also obtain that
\begin{align*}
&\int\E[\Var\{\ind(Y\ge y)\given Z,X\}]\d u_{Y}(y)\\
&=\int\E[\Var\{\ind(Y\le y)\given Z,X\}]\d u_{Y}(y)\\
&=\int\E[\P(Y\le y\given Z,X) - \{\P(Y\le y\given Z,X)\}^2]\d u_{Y}(y)\\
&=\int_{0}^{1}\int_{0}^{1}\int_{0}^{1}\Big[\int_{0}^{y}q_{Y\given (Z,X)}(t\given z,x)\d t - \Big\{\int_{0}^{y}q_{Y\given (Z,X)}(t\given z,x)\d t\Big\}^2\Big]\d z\d x\d y,
\end{align*}
where
\begin{align*}
&\int_{0}^{1}\int_{0}^{1}\int_{0}^{1}\Big[\int_{0}^{y}q_{Y\given Z,X}(t\given z,x)\d t\Big]\d z\d x\d y\\
&=\int_{0}^{1}\int_{0}^{1}\int_{0}^{1}\Big[y+\int_{0}^{y}\gamma_{\rho,m'}(t,z)\eta_{\rho,m}(x) \d t\Big]\d z\d x\d y\\
&=\int_{0}^{1}\int_{0}^{1}\int_{0}^{1}y \d z\d x\d y
 =\frac12,
\end{align*}
and
\begin{align*}
&\int_{0}^{1}\int_{0}^{1}\int_{0}^{1}\Big[\Big\{\int_{0}^{y}q_{Y\given Z,X}(t\given z,x)\d t\Big\}^2\Big]\d z\d x\d y\\
&=\int_{0}^{1}\int_{0}^{1}\int_{0}^{1}\Big[\Big\{y+\int_{0}^{y}\gamma_{\rho,m'}(t,z)\eta_{\rho,m}(x) \d t\Big\}^2\Big]\d z\d x\d y\\
&=\int_{0}^{1}\int_{0}^{1}\int_{0}^{1}\Big[y^2+2y\Big\{\int_{0}^{y}\gamma_{\rho,m'}(t,z)\eta_{\rho,m}(x) \d t\Big\}+\Big\{\int_{0}^{y}\gamma_{\rho,m'}(t,z)\eta_{\rho,m}(x) \d t\Big\}^2\Big]\d z\d x\d y\\
&=\frac13+\int_{0}^{1}\int_{0}^{1}\int_{0}^{1}\Big[\Big\{\int_{0}^{y}\gamma_{\rho,m'}(t,z)\eta_{\rho,m}(x) \d t\Big\}^2\Big]\d z\d x\d y.
\end{align*}
Hence,
\begin{align*}
&\int\E[\Var\{\ind(Y\ge y)\given Z,X\}]\d \P_{Y}(y)\\
&=\frac16-\int_{0}^{1}\int_{0}^{1}\int_{0}^{1}\Big[\Big\{\int_{0}^{y}\gamma_{\rho,m'}(t,z)\eta_{\rho,m}(x) \d t\Big\}^2\Big]\d z\d x\d y\\
&=\frac16-\int_{0}^{1}\int_{0}^{1}\int_{0}^{1}\Big[\Big\{\eta_{\rho,m}(x) \int_{0}^{y}\gamma_{\rho,m'}(t,z) \d t\Big\}^2\Big]\d z\d x\d y\\
&=\frac16-\int_{0}^{1}\int_{0}^{1}\int_{0}^{1}\Big[\Big\{\eta_{\rho,m}(x) \int_{0}^{y}\rho^2 \sum_{i \in \zahl{m'}}\sum_{j \in \zahl{m'}} \delta_{ij} h_{i, m'}(t)h_{j, m'}(z) \d t\Big\}^2\Big]\d z\d x\d y\\
&=\frac16-\int_{0}^{1}\int_{0}^{1}\int_{0}^{1}\Big[\Big\{\eta_{\rho,m}(x) \times \rho^2 \sum_{i \in \zahl{m'}}\sum_{j \in \zahl{m'}} \delta_{ij} H_{i, m'}(y)h_{j, m'}(z) \Big\}^2\Big]\d z\d x\d y\\
&=\frac16-\int_{0}^{1}\int_{0}^{1}\int_{0}^{1}\Big[\Big\{\rho \sum_{k \in \zahl{m}} \nu_{k} h_{k, m}(x)\times \rho^2 \sum_{i \in \zahl{m'}}\sum_{j \in \zahl{m'}} \delta_{ij} H_{i, m'}(y)h_{j, m'}(z) \Big\}^2\Big]\d z\d x\d y,
\end{align*}
where
\begin{align*}
&H_{k, m}(x) 
 := \int_{0}^{x} h_{k, m}(t) \d t\\
& = \int_{0}^{x} \sqrt{m}\times h(m t - k + 1) \d t\\
& = \int_{0}^{mx} m^{-1/2}\times h(u - k + 1) \d u\\
& = m^{-1/2}\int_{-k+1}^{mx-k+1} h(u) \d u\\
& = \begin{cases} 
m^{-1/2}\int_{0}^{mx-k+1} h(u) \d u & ~~~\text{if } x\in[(k-1)/m, k/m],\\
0                                   & ~~~\text{otherwise}
\end{cases}\\
& = \begin{cases} 
H_{k,m}(x)                          & ~~~\text{if } x\in[(k-1)/m, k/m],\\
0                                   & ~~~\text{otherwise},
\end{cases}
\end{align*}
where 
\[
H_{k, m}(x):=m^{-1/2} H(mx-k+1)~~~\text{ and }~~~H(x):=\int_{0}^{x} h(u) \d u.
\]

It is easy to verify that $h_{k,m}(x)h_{k',m}(x)=0$ and $H_{k,m}(x)H_{k',m}(x)=0$ for $k\ne k'$. Thus we deduce
\begin{align*}
&\Big\{\rho \sum_{k \in \zahl{m}} \nu_{k} h_{k, m}(x)\times \rho^2 \sum_{i \in \zahl{m'}}\sum_{j \in \zahl{m'}} \delta_{ij} H_{i, m'}(y)h_{j, m'}(z) \Big\}^2 \\
& = \rho^6 \sum_{k \in \zahl{m}} \sum_{i \in \zahl{m'}}\sum_{j \in \zahl{m'}} \{h_{k, m}(x)\}^2 \{H_{i, m'}(y)\}^2 \{h_{j, m'}(z)\}^2,
\end{align*}
and accordingly
\begin{align*}
&\int\E[\Var\{\ind(Y\ge y)\given Z,X\}]\d \P_{Y}(y)\\
&=\frac16-\int_{0}^{1}\int_{0}^{1}\int_{0}^{1}\Big[\rho^6 \sum_{k \in \zahl{m}} \sum_{i \in \zahl{m'}}\sum_{j \in \zahl{m'}} \{h_{k, m}(x)\}^2 \{H_{i, m'}(y)\}^2 \{h_{j, m'}(z)\}^2\Big]\d z\d x\d y\\
&=\frac16-\rho^6 \sum_{k \in \zahl{m}} \sum_{i \in \zahl{m'}}\sum_{j \in \zahl{m'}} \Big[\int_{0}^{1}\{h_{k, m}(x)\d x\}^2\Big] \Big[\int_{0}^{1}\{H_{i, m'}(y)\}^2\d y\Big] \Big[\int_{0}^{1}\{h_{j, m'}(z)\}^2\d z\Big],
\end{align*}
where
\begin{align*}
\int_{0}^{1} \{h_{k, m}(x)\}^2 \d x 
&= \int_{0}^{1} m\times \{h(m x - k + 1)\}^2 \d x
 = \int_{0}^{m} \{h(u - k + 1)\}^2 \d u\\
&= \int_{- k + 1}^{m - k + 1} \{h(u)\}^2 \d u
 = \int_{0}^{1} \{h(u)\}^2 \d u
 = 1.
\end{align*}
Similarly,
\begin{align*}
\int_{0}^{1} \{H_{k, m}(x)\}^2 \d x 
&= \int_{0}^{1} m^{-1}\{H(m x - k + 1)\}^2 \d x
 = \int_{0}^{m} m^{-2}\{H(u - k + 1)\}^2 \d u\\
&= \int_{- k + 1}^{m - k + 1} m^{-2}\{H(u)\}^2 \d u
 = m^{-2}\int_{0}^{1} \{H(u)\}^2 \d u.
\end{align*}
Therefore,
\begin{align*}
&\int\E[\Var\{\ind(Y\ge y)\given Z,X\}]\d \P_{Y}(y)\\
&=\frac16-\rho^6 \sum_{k \in \zahl{m}} \sum_{i \in \zahl{m'}}\sum_{j \in \zahl{m'}} \Big[\int_{0}^{1}\{h_{k, m}(x)\d x\}^2\Big] \Big[\int_{0}^{1}\{H_{i, m'}(y)\}^2\d y\Big] \Big[\int_{0}^{1}\{h_{j, m'}(z)\}^2\d z\Big]\\
&=\frac16-\rho^6 m m' m' (m')^{-2}\int_{0}^{1} \{H(u)\}^2 \d u\\
&=\frac16-\rho^6 m\int_{0}^{1} \{H(u)\}^2 \d u.
\yestag\label{eq:Xizhi2}
\end{align*}
Plugging \eqref{eq:Xizhi1} and \eqref{eq:Xizhi2} into \eqref{eq:Xizhi0} yields that
\[
\xi(\P_{1,n})
=6\rho^6 m\int_{0}^{1} \{H(u)\}^2 \d u.
\]
Taking expectation over all Rademacher sequences $\nu:=[\nu_k]_{k \in \zahl{m}}$ and $\delta=[\delta_{ij}]_{i,j \in \zahl{m'}}$ completes the proof. 
\end{proof}

\subsubsection{Proof of Lemma~\ref{lem:tech11}}

\begin{proof}[Proof of Lemma~\ref{lem:tech11}]
Notice that
\[
\min(R_i,R_{M(i)})=1+\sum_{k\ne i, M(i)}\ind\Big\{Y_k\le \min(Y_i,Y_{M(i)})\Big\}.
\]
We write
\[
\widehat\xi_n
=\frac{n^{-2}\sum_{i=1}^{n}\{\min(R_i,R_{M(i)})-1\}
      -n^{-2}\sum_{i=1}^{n}\{\min(R_i,R_{N(i)})-1\}}
      {1/6}
=:\frac{\widehat\gamma_n-\widehat\gamma^{X}_n}{1/6}~~~~\text{say.}
\]
Since the marginal of $(X,Y)$, is uniformly distributed on $[0,1]^2$ under both null hypothesis $\P_0$ and local alternative hypothesis $\P_{1,n}$, it holds that
\[
\E\{\widehat\gamma^{X}_n(\P_{1,n})\}=\E\{\widehat\gamma^{X}_n(\P_0)\}
~~~~\text{and}~~~~
\E[\{\widehat\gamma^{X}_n(\P_{1,n})\}^2]=\E[\{\widehat\gamma^{X}_n(\P_0)\}^2].
\]
In order to prove \eqref{eq:meanvar2}, we aim to prove
\begin{align*}
&\Big\lvert \E\{\widehat\gamma_{n}(\P_{1,n})\} 
          - \E\{\widehat\gamma_{n}(\P_0)\} \Big\rvert \lesssim \rho^6 m,
\yestag\label{eq:exiexp-aim}\\
&\Big\lvert \E[n\{\widehat\gamma_{n}(\P_{1,n})\}^2] 
          - \E[n\{\widehat\gamma_{n}(\P_0)\}^2] \Big\rvert \lesssim n\rho^6 m,
\yestag\label{eq:exivar-aim}
\end{align*}
and
\begin{align*}
&\Big\lvert \E[n\{\widehat\gamma_{n}(\P_{1,n})\}\{\widehat\gamma^{X}_{n}(\P_{1,n})\}] 
          - \E[n\{\widehat\gamma_{n}(\P_0)\}\{\widehat\gamma^{X}_{n}(\P_0)\}] \Big\rvert \lesssim n\rho^6 m.
\yestag\label{eq:exicov-aim}
\end{align*}

We start from computing $\E\{\widehat\gamma_{n}(\P_{1,n})\}$ and 
                       $\E[\{\widehat\gamma_{n}(\P_{1,n})\}^2]$.
Recall our notation $\mW=(X,Z)$, $\mW_i=(X_i,Z_i)$, $\mw=(x,z)$, $\mw_i=(x_i,z_i)$, 
$r_{ij}:=\lVert\mw_j-\mw_i\rVert$, and that
$B(\mw_1,r)$ denotes the ball of radius $r$ centered at $\mw_1$, 
and $\lambda(\cdot)$ denotes the Lebesgue measure. 
We obtain
\begin{align*}
&\E\{\widehat\gamma_{n}(\P_{1,n})\}\\
&=n^{-2}\sum_{i=1}^{n}\E\Big[\sum_{k\ne i, M(i)}\ind\Big\{Y_k\le \min(Y_i,Y_{M(i)})\Big\}\Big]\\
&=n^{-1}\E\Big[\sum_{k\ne 1, M(1)}\ind\Big\{Y_k\le \min(Y_1,Y_{M(1)})\Big\}\Big]\\
&=n^{-1}(n-1)\E\Big[\sum_{k\ne 1, 2}\ind\Big\{Y_k\le \min(Y_1,Y_{2})\Big\}\ind\Big\{M(1)=2\Big\}\Big]\\
&=n^{-1}(n-1)(n-2)\E\Big[\ind\Big\{Y_3\le \min(Y_1,Y_{2})\Big\}\ind\Big\{M(1)=2\Big\}\Big]\\
&=n^{-1}(n-1)(n-2)\E\bigg(\E\Big[\ind\Big\{Y_3\le \min(Y_1,Y_{2})\Big\}\ind\Big\{M(1)=2\Big\}\Biggiven Y_1,\mW_1,Y_2,\mW_2,Y_3,\mW_3\Big]\bigg)\\
&=n^{-1}(n-1)(n-2)\E_{\nu,\delta}\bigg(\int \ind\{y_3\le \min(y_1,y_{2})\}
  q_{Y,\mW}(y_1,\mw_1)q_{Y,\mW}(y_2,\mw_2)q_{Y,\mW}(y_3,\mw_3)\\
&\qquad\times\ind(r_{13}<r_{12})
       \times[\lambda\{[0,1]^2\backslash B_{}(\mw_1,r_{12})\}]^{n-3}
\d y_3\d \mw_3\d y_2\d \mw_2\d y_1\d \mw_1\bigg)\\
&=n^{-1}(n-1)(n-2)\int \ind\{y_3\le \min(y_1,y_{2})\}
  \E_{\nu,\delta}\Big\{q_{Y,\mW}(y_1,\mw_1)q_{Y,\mW}(y_2,\mw_2)q_{Y,\mW}(y_3,\mw_3)\Big\}\\
&\qquad\times\ind(r_{13}<r_{12})
       \times[\lambda\{[0,1]^2\backslash B_{}(\mw_1,r_{12})\}]^{n-3}
\d y_3\d \mw_3\d y_2\d \mw_2\d y_1\d \mw_1,
\yestag\label{eq:exi1}
\end{align*}
where the expectation $\E_{\nu,\delta}$ is taken over all Rademacher sequences $\nu:=[\nu_k]_{k \in \zahl{m}}$ and $\delta=[\delta_{ij}]_{i,j \in \zahl{m'}}$.  
Recall that
\[
q_{X,Y,Z}(x,y,z) = 1 + \rho^3 \sum_{k \in \zahl{m}}\sum_{i \in \zahl{m'}}\sum_{j \in \zahl{m'}} \nu_{k} \delta_{ij} h_{k, m}(x)h_{i, m'}(y)h_{j, m'}(z), 
\]
and accordingly
\begin{align*}
&\E_{\nu,\delta}\Big\{q_{Y,\mW}(y_1,\mw_1)q_{Y,\mW}(y_2,\mw_2)q_{Y,\mW}(y_3,\mw_3)\Big\}\\
&=1+\rho^6 \sum_{k \in \zahl{m}} \sum_{i \in \zahl{m'}} \sum_{j \in \zahl{m'}} \{h_{k, m}(x_1)h_{k, m}(x_2)\} \{h_{i, m'}(y_1)h_{i, m'}(y_2)\} \{h_{j, m'}(z_1)h_{j, m'}(z_2)\} \\
&\qquad+\rho^6 \sum_{k \in \zahl{m}} \sum_{i \in \zahl{m'}} \sum_{j \in \zahl{m'}} \{h_{k, m}(x_1)h_{k, m}(x_3)\} \{h_{i, m'}(y_1)h_{i, m'}(y_3)\} \{h_{j, m'}(z_1)h_{j, m'}(z_3)\} \\
&\qquad+\rho^6 \sum_{k \in \zahl{m}} \sum_{i \in \zahl{m'}} \sum_{j \in \zahl{m'}} \{h_{k, m}(x_2)h_{k, m}(x_3)\} \{h_{i, m'}(y_2)h_{i, m'}(y_3)\} \{h_{j, m'}(z_2)h_{j, m'}(z_3)\}.
\end{align*}
Notice that
\begin{align*}
n^{-1}(n-1)(n-2)&\int \ind\{y_3\le \min(y_1,y_{2})\}\times\ind(r_{13}<r_{12})\\
&\qquad\times[\lambda\{[0,1]^2\backslash B_{}(\mw_1,r_{12})\}]^{n-3}
\d y_3\d \mw_3\d y_2\d \mw_2\d y_1\d \mw_1=\E\{\widehat\gamma_{n}(\P_0)\}=\frac{n-2}{3n},\\
(n-1)&\int \ind(r_{13}<r_{12}) \times[\lambda\{[0,1]^2\backslash B(\mw_1,r_{12})\}]^{n-3}
\d \mw_3\d \mw_2\d \mw_1=1,\\
(n-2)&\int \sum_{k \in \zahl{m }} \{h_{k, m }(x_1)h_{k, m }(x_2)\} 
           \sum_{j \in \zahl{m'}} \{h_{j, m'}(z_1)h_{j, m'}(z_2)\} \\
&\qquad \times\ind(r_{13}<r_{12}) \times[\lambda\{[0,1]^2\backslash B_{}(\mw_1,r_{12})\}]^{n-3} \d \mw_3\d \mw_2\d \mw_1 \lesssim mm',\\
(n-2)&\int \sum_{k \in \zahl{m }} \{h_{k, m }(x_1)h_{k, m }(x_3)\} 
           \sum_{j \in \zahl{m'}} \{h_{j, m'}(z_1)h_{j, m'}(z_3)\} \\
&\qquad \times\ind(r_{13}<r_{12}) \times[\lambda\{[0,1]^2\backslash B_{}(\mw_1,r_{12})\}]^{n-3} \d \mw_3\d \mw_2\d \mw_1 \lesssim mm',\\
(n-2)&\int \sum_{k \in \zahl{m }} \{h_{k, m }(x_2)h_{k, m }(x_3)\} 
           \sum_{j \in \zahl{m'}} \{h_{j, m'}(z_2)h_{j, m'}(z_3)\} \\
&\qquad \times\ind(r_{13}<r_{12}) \times[\lambda\{[0,1]^2\backslash B_{}(\mw_1,r_{12})\}]^{n-3} \d \mw_3\d \mw_2\d \mw_1 \lesssim mm'.
\yestag\label{eq:compu-tools1}
\end{align*}
 In addition, by \eqref{eq:specific},
\begin{align*}
&\int \ind\{y_3\le \min(y_1,y_{2})\} \sum_{i \in \zahl{m'}} h_{i, m'}(y_1)h_{i, m'}(y_2) \d y_3\d y_2\d y_1 \lesssim \frac{1}{m'},\\
&\int \ind\{y_3\le \min(y_1,y_{2})\} \sum_{i \in \zahl{m'}} h_{i, m'}(y_1)h_{i, m'}(y_3) \d y_3\d y_2\d y_1 \lesssim \frac{1}{m'},\\
&\int \ind\{y_3\le \min(y_1,y_{2})\} \sum_{i \in \zahl{m'}} h_{i, m'}(y_2)h_{i, m'}(y_3) \d y_3\d y_2\d y_1 \lesssim \frac{1}{m'}.
\yestag\label{eq:compu-tools2}
\end{align*}
Putting theses together yields
the desired result \eqref{eq:exiexp-aim}.

Now we turn to $\E[n\{\widehat\gamma_{n}(\P_{1,n})\}^2]$. We have
\begin{align*}
&\E[n\{\widehat\gamma_{n}(\P_{1,n})\}^2]\\
&=n^{-2}\E\bigg(\Big[\sum_{k\ne 1, M(1)}\ind\Big\{Y_k\le \min(Y_1,Y_{M(1)})\Big\}\Big]^2\bigg)+n^{-2}(n-1)\\
&\qquad\times\E\bigg(
\Big[\sum_{k\ne 1, M(1)}\ind\Big\{Y_k\le \min(Y_1,Y_{M(1)})\Big\}\Big]\times
\Big[\sum_{\ell\ne 2, M(2)}\ind\Big\{Y_\ell\le \min(Y_2,Y_{M(2)})\Big\}\Big]\bigg).
\yestag\label{eq:exivar-1}
\end{align*}
The first term in \eqref{eq:exivar-1} can be expanded as
\begin{align*}
&\E\bigg(\Big[\sum_{k\ne 1, M(1)}\ind\Big\{Y_k\le \min(Y_1,Y_{M(1)})\Big\}\Big]^2\bigg)\\
&=(n-1)\E\bigg(\Big[\sum_{k\ne 1, 2}\ind\Big\{Y_k\le \min(Y_1,Y_{2})\Big\}\Big]^2\ind\Big\{M(1)=2\Big\}\bigg)\\
&=(n-1)\E\bigg(\Big[\sum_{k\ne 1, 2}\sum_{\ell\ne 1, 2}\ind\Big\{\max(Y_k,Y_{\ell})\le \min(Y_1,Y_{2})\Big\}\Big]\ind\Big\{M(1)=2\Big\}\bigg)\\
&=(n-1)(n-2)\E\Big[\ind\Big\{Y_3\le \min(Y_1,Y_{2})\Big\}\ind\Big\{M(1)=2\Big\}\Big]\\
&\quad+(n-1)(n-2)(n-3)\E\Big[\ind\Big\{\max(Y_3, Y_4)\le \min(Y_1,Y_{2})\Big\}\ind\Big\{M(1)=2\Big\}\Big]\\
&=(n-1)(n-2)\int \ind\{y_3\le \min(y_1,y_{2})\}
  \E_{\nu,\delta}\Big\{q_{Y,\mW}(y_1,\mw_1)q_{Y,\mW}(y_2,\mw_2)q_{Y,\mW}(y_3,\mw_3)\Big\}\\
&\qquad\times\ind(r_{12}<r_{13})
       \times[\lambda\{[0,1]^2\backslash B_{}(\mw_1,r_{12})\}]^{n-3}
\d y_3\d \mw_3\d y_2\d \mw_2\d y_1\d \mw_1\\
&\quad+(n-1)(n-2)(n-3)\int \ind\{\max(y_3,y_4)\le \min(y_1,y_{2})\}
  \E_{\nu,\delta}\Big\{\prod_{k=1}^{4}q_{Y,\mW}(y_k,\mw_k)\Big\}\\
&\qquad\times\ind\{r_{12}<\min(r_{13},r_{14})\}
       \times[\lambda\{[0,1]^2\backslash B_{}(\mw_1,r_{12})\}]^{n-4}
\d y_4\d \mw_4\d y_3\d \mw_3\d y_2\d \mw_2\d y_1\d \mw_1.
\end{align*}
The second term in \eqref{eq:exivar-1} can be similarly expanded.
Similar computations as for \eqref{eq:compu-tools1} and \eqref{eq:compu-tools2}, 
but more involved, yield \eqref{eq:exivar-aim}.

We can similarly prove \eqref{eq:exicov-aim} and thus skip the details for brevity. 
This completes the proof. 
\end{proof}

\subsubsection{Proof of Lemma~\ref{lem:tech12}}

\begin{proof}[Proof of Lemma~\ref{lem:tech12}]
We have
\[
\sqrt{n}\xi_n - \sqrt{n}\widehat\xi_n
:=\sqrt{n}\widehat\xi_n
\Big(\frac{\int\E[\Var\{\ind(Y\ge t)\given \mX\}]\d\P_{Y}(t)}
{n^{-2}\sum_{i=1}^{n}\{R_i-\min(R_i,R_{N(i)})\}}-1\Big).
\yestag\label{eq:fenjie}
\]
For the first term in the right-hand side of \eqref{eq:fenjie}, \eqref{eq:secondmoment}  yields $\sqrt{n}\widehat\xi_n=O_{\P}(1)$. 
For the latter term, recall that \citet[Theorem~9.1]{MR4352523} show
\[
\frac{1}{n^2}\sum_{i=1}^{n}\{R_i-\min(R_i,R_{N(i)})\}
\stackrel{\sf a.s.}{\longrightarrow}
\int\E[\Var\{\ind(Y\ge t)\given \mX\}]\d\P_{Y}(t)
=\frac{1}{6}.
\]
Accordingly, the latter term in \eqref{eq:fenjie} is $o_{\P}(1)$. This concludes the proof of \eqref{eq:slu}.
\end{proof}

%
%
%
%
%
%
%
%
%
%
%
%
%
%

\subsubsection{Proof of Lemma~\ref{lem:tech9}}

\begin{proof}[Proof of Lemma~\ref{lem:tech9}]

It is clear that \eqref{eq:perturbation} holds and $\P_0\in \cP_{0}(L,s)$. 
Next, we show that $\P_{1,n}\in \cP_1(\Zeta n^{-2s/(4s+3)};L,s)$. 
To show $\P_{1,n}\in\cP_{1}(L,s)$, 
in view of the Proof of Theorem 4.2 in \citet[Appendix~B]{MR4319245}, 
it suffices to notice that
\begin{align*}
\Big\lvert
q_{Y,Z \given \mX}(y,z \given \mx) - 
q_{Y,Z \given \mX}(y,z \given \mx')
\Big\rvert
&\le \Big(\rho^3 \sqrt{m}^{}\sqrt{(m')^2} c_{\infty}^{3} \times 4 m^s \Big)
\Big(\lvert x - x'\rvert^s\Big),\\
\Big\lvert
q_{Y,Z \given \mX}(y,z   \given \mx) - 
q_{Y,Z \given \mX}(y',z' \given \mx)
\Big\rvert
&\le \Big(\rho^3 \sqrt{m}^{}\sqrt{(m')^2} c_{\infty}^{3} \times 4 (m')^s \Big)
\Big(\lvert y-y'\rvert^s+\lvert z-z'\rvert^s\Big),
\end{align*}
and $\rho^3 \sqrt{m}^{}\sqrt{(m')^2} c_{\infty}^{3} \to 0$. 
To show $\inf_{q^{0} \in \cP_{0}} \lVert q_{1,n} - q^{0}\rVert_1\ge\Zeta n^{-2s/(4s+3)}$, where $q_{1,n}$ denotes the corresponding density of $\P_{1,n}$, 
using Lemma B.4 in \citet[Appendix~B]{MR4319245} yields
\[
\inf_{q^{0} \in \cP_{0}}\lVert q_{1,n} - q^{0}\rVert_1 
\geq \frac{\lVert q_{1,n} - 1\rVert_1}{6}
=\frac{\rho^3\sqrt{m}^{} \sqrt{(m')^2} c_1^{3}}{6}
\ge \Zeta n^{-2s/(4s+3)}.
\]
This completes the proof. 
\end{proof}

\end{appendix}

{\small
\bibliographystyle{apalike}
\bibliography{AMS}
}

\end{document}